\newtheorem{theorem}{Theorem}[section]
\newtheorem{lemma}{Lemma}[section]
\newtheorem{remark}{Remark}[section]
\newtheorem{corollary}{Corollary}[section]
\numberwithin{equation}{section}
\newcommand{\myR}{\mathcal{R}}
\newcommand{\myQ}{\mathcal{Q}}
\newcommand{\myE}{\mathcal{E}}
\renewcommand{\div}{\operatorname{div}}
\DeclareMathOperator{\Tr}{Tr}
\newcommand{\bu}{\bm{u}}
\newcommand{\bv}{\bm{v}}
\newcommand{\bx}{\bm{x}}
\newcommand{\br}{\bm{r}}
\newcommand{\bq}{\bm{q}}
\newcommand{\bnu}{\bm{\nu}}
\newcommand{\bphi}{\bm{\phi}}
\newcommand{\bpsi}{\bm{\psi}}
\def\rh{\hat{\br}}
\def\RT{\text{RT}}
\def\hE{\hat{\omega}}
\def\he{\hat{\gamma}}
\def\hv{\hat\bv}
\def\DF{D\!F}
\begin{document}

\title{Flux-mortar mixed finite element methods with multipoint flux approximation}

\author{Wietse M. Boon\footnotemark[1]
  \and
  Dennis Gl\"aser\footnotemark[2]
  \and
  Rainer Helmig\footnotemark[2]
  \and
  Ivan Yotov\footnotemark[3]}
\renewcommand{\thefootnote}{\fnsymbol{footnote}}

\footnotetext[1]{Politecnico di Milano, Italy; \texttt{wietsemarijn.boon@polimi.it}}
\footnotetext[2]{University of Stuttgart, Germany; \texttt{dennis.glaeser@iws.uni-stuttgart.de, rainer.helmig@iws.uni-stuttgart.de}}
\footnotetext[3]{University of Pittsburgh, USA; \texttt{yotov@math.pitt.edu}}

\renewcommand{\thefootnote}{\arabic{footnote}}

\maketitle

\begin{abstract}
  The flux-mortar mixed finite element method was recently developed in \cite{boon2020flux} for a general class of domain decomposition saddle point problems on non-matching grids. In this work we develop the method for Darcy flow using the multipoint flux approximation as the subdomain discretization. The subdomain problems involve solving positive definite cell-centered pressure systems. The normal flux on the subdomain interfaces is the mortar coupling variable, which plays the role of a Lagrange multiplier to impose weakly continuity of pressure. We present well-posedness and error analysis based on reformulating the method as a mixed finite element method with a quadrature rule. We develop a  non-overlapping domain decomposition algorithm for the solution of the resulting algebraic system that reduces it to an interface problem for the flux-mortar, as well as an efficient interface preconditioner. A series of numerical experiments is presented illustrating the performance of the method on general grids, including applications to flow in complex porous media.
\end{abstract}

{\bf Keywords:}
Darcy flow, flux-mortar, mixed finite element, multipoint flux approximation, non-matching grids

\section{Introduction}
\label{sec:introduction}

The flux-mortar mixed finite element method \cite{boon2020flux} is a domain decomposition method that allows for arbitrarily non-matching grids between the subdomains. In the context of Darcy flow, by choosing the normal flux as the interface mortar coupling variable, the mass conservation properties of the subdomain discretizations are preserved. The mortar variable is used as a Lagrange multiplier to impose weakly continuity of pressure. The method can be implemented via a non-overlapping domain decomposition algorithm by reducing the global system to the solution of a symmetric and positive definite interface problem for the mortar variable. When using a Krylov space iterative method, such as the conjugate gradient, for the solution of the interface problem, each iteration requires solving subdomain problems with normal flux boundary conditions on the interior interfaces, which can be done in parallel. 

The flux-mortar mixed finite element method is dual to the pressure-mortar mixed finite element method \cite{ACWY,APWY}, where the interface pressure is the coupling variable used to impose weakly continuity of normal flux. Both approaches were originally proposed in \cite{GW} in the case of matching subdomain grids. While the pressure-mortar mixed finite element method has been extensively studied, including multiphase and multiphysics flows in porous media \cite{PWY}, nonlinear elliptic \cite{ArshadParkShin-elliptic} and parabolic \cite{ArshadParkShin-parabolic} problems, mixed elasticity \cite{KhatYot} and poroelasticity \cite{JKY}, Stokes--Darcy flows \cite{GVY,VWY}, flow in fractured porous media \cite{AhmFumBud}, Stokes--Biot couplings \cite{AKYZ}, multiscale mortar multipoint flux mixed finite element discretizations \cite{wheeler2012multiscale}, mortar mimetic finite difference methods \cite{BLSWY}, and coupling with DG methods \cite{GSWY}, the flux-mortar mixed finite element method has only recently received an increased attention. It has been applied in the context of fracture flows \cite{boon2018robust} and coupled Stokes-Darcy flows \cite{Boon2019StokesDarcy,boon2020flux}. The flux-mortar mixed finite element method is related to the subgrid upscaling method proposed in \cite{arbogast2004analysis}. Moreover, the method has similarities with the multiscale hybrid-mixed (MHM) method with local mixed solves \cite{duran2019multiscale,MHM-two-phase} and we refer the interested reader to \cite[Sec. 1]{boon2020flux} for an exposition of these relations. 

In this paper we develop the flux-mortar mixed finite element method for Darcy flow using the multipoint flux approximation (MPFA) method as the subdomain discretization. The proposed method thereby combines the widely used MPFA method with the mass conservative flux-mortar domain decomposition approach. The MPFA method \cite{aavatsmark2002introduction,aavatsmark2007convergence,aavatsmark1998discretization,edwards1998finite,edwards2002unstructured} is a finite volume method that can handle general polytopal grids and full permeability tensors, which may be discontinuous from element to element. As is common with finite volume discretizations, the MPFA method preserves mass locally.
On simplicial, quadrilateral, and hexahedral elements, the MPFA method has been related to the multipoint flux mixed finite element (MFMFE) method \cite{WheYot,IngWheYot,WheXueYot}, which uses a Brezzi--Douglas--Marini BDM$_1$-type space for the velocity and a vertex quadrature rule for the velocity bilinear form. On general polytopal grids, the MPFA method has been formulated and analyzed as a mimetic finite difference (MFD) method in \cite{LSY}.

We refer to our method as the flux-mortar MFMFE method, since we utilize the relation of the MPFA method to the MFMFE method in our analysis. We note that our analysis allows for different polynomial degrees and grids for the subdomain and mortar discretizations. In particular, the mortar grid can be chosen to be coarser than the subdomain grids, resulting in a multiscale approximation. This feature is dual to the multiscale mortar mixed finite element method with pressure-mortar developed in \cite{APWY,PWY}. In our case, the solution is approximated locally on the fine scale, while continuity of pressure is imposed on the coarse scale. From computational point of view, the coarse mortar grid results in a smaller interface problem. Furthermore, even though this is beyond the scope of the paper, similarly to the multiscale flux basis in \cite{GanYot}, one can precompute a multiscale pressure basis by solving local Neumann subdomain problems for each flux-mortar degree of freedom. Thus, the computational cost of the method can be made comparable to existing multiscale mixed finite element methods \cite{ChungEfeLee,Aarnes-etal,arbogast2004analysis,CVD-MPFA,duran2019multiscale}. We emphasize that our method provides extra flexibility due to the non-matching subdomain and mortar grids and different polynomial degrees for subdomain and mortar discretizations. 

The main contributions of this work are as follows. First, we carry out the \emph{a priori} analysis of the flux-mortar MFMFE method, including its stability and the error estimates. We consider simplicial grids as well as smooth quadrilateral and hexahedral grids. In both cases, the multipoint approximation of the flux leads to an additional quadrature error term, which we bound appropriately. 
Second, we present a non-overlapping domain decomposition algorithm for the solution of the resulting algebraic system that reduces it to an interface problem for the flux-mortar, and develop an efficient preconditioner for the interface problem. The interface operator requires solving Neumann subdomain problems at each iteration with a flux boundary condition on the interior interfaces. The preconditioner involves solving Dirichlet subdomain problems with specified pressure on the interfaces. Both sets of subdomain solves can be done in parallel, resulting in a scalable algorithm for distributed memory parallel computers. The numerical results show that the number of iterations of the preconditioned Krylov method for the interface problem exhibits a very weak dependence on the discretization parameter.  
Third, we present numerical experiments that verify the expected convergence of the method as well as showcase the applicability of the method for involved porous media flow problems and general grids. We test the method for faulted geologies in two and three dimensions with low and high permeable faults, where the subdomain and mortar grids are suitably chosen. We illustrate the multiscale capability of the method for one of the Society of Petroleum Engineers SPE10 benchmark problems. Finally, we consider flow in a heterogeneous porous medium, where the different subdomain and mortar grids are locally chosen to resolve the variability of the permeability.

The article is organized as follows. \Cref{sec: Model problem} introduces the model problem and the notation conventions. The flux-mortar MFMFE method is proposed in \Cref{sec: Numerical method}. \Cref{sec: Analysis simplices} presents the \emph{a priori} analysis of the method on simplicial grids. The extension to quadrilateral and hexahedral grids is discussed in \Cref{sec: Analysis quads}. In \Cref{sec: DD} we present the domain decomposition algorithm and develop the preconditioner for the interface problem. Finally, \Cref{sec: numerical results} shows the performance of the method through the use of four numerical test cases and \Cref{sec: Conclusions} contains the conclusions.

\section{Model problem}
\label{sec: Model problem}

Let $\Omega \subset \mathbb{R}^n$, $n = 2,3$ be a bounded polygonal
domain. $\Omega$ is decomposed into disjoint polygonal subdomains $\Omega_i$
indexed with $i \in I_\Omega = \{1, 2, \ldots, n_\Omega \}$. Let $\bm{\nu}_i$
denote the outward unit vector normal to the boundary $\partial
\Omega_i$. The $(n - 1)$-dimensional interface between two subdomains
$\Omega_i$ and $\Omega_j$ is denoted by $\Gamma_{ij} :=
\partial \Omega_i \cap \partial \Omega_j$. Each interface
$\Gamma_{ij}$ is assumed to be Lipschitz and endowed with a unique,
unit normal vector $\bm{\nu}$ such that
%
$
	\bm{\nu} := \bm{\nu}_i = -\bm{\nu}_j 
	\text{ on } \Gamma_{ij}, \ i < j.
        $
%
Let $\Gamma := \bigcup_{i < j} \Gamma_{ij}$ and $\Gamma_i := \Gamma
\cap \partial \Omega_i$.  We categorize $\Omega_i$ as an interior
subdomain if $\partial \Omega_i \subseteq \Gamma$, i.e. if none of its
boundaries coincide with the boundary of the domain $\Omega$.
Let $I_{int} := \{ i \in I_\Omega :\ \partial \Omega_i \subseteq \Gamma \}$.

We will use the following notation. A subscript $i$ on a variable denotes its restriction to $\Omega_i$, i.e. $w_i := w|_{\Omega_i}$.
For $G$ a domain in
$\mathbb{R}^n$, $n = 2,3$, or a manifold in $\mathbb{R}^{n-1}$,
the Sobolev spaces on $G$ are denoted by $W^{k, p}(G)$. Let
$H^k(G) := W^{k, 2}(G)$ and $L^2(G) := H^0(G)$. The $L^2(G)$-inner product is
denoted by $(\cdot,\cdot)_G$. For $G \subset \mathbb{R}^n$, let
\begin{equation*}
H(\div, G) = \{\bm{v} \in (L^2(G))^n:
\nabla\cdot \bm{v} \in L^2(G)\}.
\end{equation*}
We use the following shorthand notation to denote the norms of these spaces:
	\begin{align*}
		\| f \|_{k, G} &:= \| f \|_{H^k(G)}, &
		\| f \|_{G} &:= \| f \|_{0, G}, &
		\| \bm{v} \|_{\div, G}^2 & := \| \bm{v} \|_{H(\div, G)}^2 
                = \| \bm{v} \|_{G}^2 + \| \nabla \cdot \bm{v} \|_{G}^2.
	\end{align*}
%
We use the binary relation $a \gtrsim b$ to imply that a
constant $C > 0$ exists, independent of the mesh size $h$, such that
$a \gtrsim C b$. The relationship $\lesssim$ is defined analogously.

The model problem for single-phase flow in porous media is
\begin{equation}\label{model}
  \bu = - K \nabla p, \quad \nabla\cdot \bu = f \ \mbox{ in } \Omega,
  \quad p = 0 \ \mbox{ on } \partial\Omega,
  \end{equation}
where $\bm{u}$ is the Darcy velocity, $p$ is
the pressure, $K$ is a uniformly bounded symmetric positive-definite
tensor field representing the conductivity, and $f\in L^2(\Omega)$ is a source function. We assume
that there exist $0 < k_{min} \le k_{max} < \infty$ such that $\forall \bx \in \Omega$,
\begin{equation}\label{K-spd}
  k_{min} \xi^T\xi \le \xi^T K (\bx)\xi \le k_{max} \xi^T\xi, \quad \forall
  \xi \in \mathbb{R}^n.
\end{equation}
The variational formulation of \eqref{model} is: Find
$(\bm{u}, p) \in V \times W := H(\div, \Omega) \times L^2(\Omega)$ such that
\begin{subequations}\label{weak-model}
	\begin{align}
		(K^{-1} \bm{u}, \bm{v})_\Omega
		- (p, \nabla \cdot \bm{v})_\Omega
		&= 0,
		& \forall \bm{v} &\in V, \label{weak-model-1}\\
		(\nabla \cdot \bm{u}, w)_\Omega
		&= (f, w)_\Omega,
		& \forall w &\in W. \label{weak-model-2}
	\end{align}
\end{subequations}
It is well known that \eqref{weak-model} has a unique solution \cite{boffi2013mixed}. Letting
\begin{equation*}
a_i(\bm{u}_i, \bm{v}_{i}) := (K^{-1} \bm{u}_i, \bm{v}_{i})_{\Omega_i}, \ \
a(\bm{u}, \bm{v}) := \sum_i  a_i(\bm{u}_i, \bm{v}_{i}), \ \
b_i(\bm{v}_{i}, p_i) := (\nabla \cdot \bm{v}_{i}, p_i)_{\Omega_i}, \ \
b(\bm{v}, p) := \sum_i b_i(\bm{v}_{i}, p_i),
\end{equation*}
the system \eqref{weak-model} can be written as
\begin{subequations} \label{eq: general form}
	\begin{align}
		\sum_i a_i(\bm{u}_i, \bm{v}_{i})
		- \sum_i b_i(\bm{v}_{i}, p_i)
		&= 0,
		& \forall \bm{v} &\in V, \label{eq: general form 1}\\
		\sum_i b_i(\bm{u}_i, w_i)
		&= (f, w)_\Omega,
		& \forall w &\in W. \label{eq: general form 2}
	\end{align}
\end{subequations}

For given $\Omega_i$, the local velocity and pressure function spaces
are defined as
$V_i := H(\div, \Omega_i)$ and
$W_i := L^2(\Omega_i)$, respectively. The global space $V$ possesses
continuity of the normal trace on $\Gamma$. In particular, it holds that
\begin{align} \label{eq: characterization V}
V = \left\{ \bm{v} \in \bigoplus_i V_i:\ 
	\bm{\nu}\cdot \bm{v}_i = \bm{\nu}\cdot \bm{v}_j \ \text{on each } \Gamma_{ij}
	\right\}.
\end{align}
The normal flux $\bm{\nu}\cdot \bm{u}$ on $\Gamma$ will be modeled by a Lagrange multiplier $\lambda \in \Lambda$, with
\begin{align*}
\Lambda := L^2(\Gamma).
\end{align*}
We note that $\Lambda$ has more regularity than the normal trace of $V$ on $\Gamma$, which is utilized in the numerical scheme. For $\lambda \in \Lambda$, we use a subscript to indicate its relative
orientation with respect to the adjacent subdomains:
\begin{align*}
 	\lambda_i := \lambda, \ \
 	\lambda_j := -\lambda \ \
 	\text{ on } \Gamma_{ij},\ i<j.
\end{align*}
In particular, $\lambda_i$ models $\bnu_i\cdot \bu$ and $\lambda_j$ models
$\bnu_j\cdot \bu$ on $\Gamma_{ij}$.

Next, we associate appropriate norms to the function spaces. The spaces $W$
and $\Lambda$ are equipped with the standard $L^2(\Omega)$ and
$L^2(\Gamma)$ norms, respectively, and the space $V$ is equipped with a broken $H(\div)$
norm:
\begin{align*}
	\| \bm{v} \|_V &:= \sum_i \| \bm{v}_{i} \|_{\div, \Omega_i}, &
	\| w \|_W &:= \| w \|_{\Omega}, &
	\| \mu \|_\Lambda &:= \| \mu \|_{\Gamma}.
\end{align*}

\section{Numerical method}
\label{sec: Numerical method}

We next describe the flux-mortar multipoint flux mixed finite element method for 
\eqref{eq: general form}, based on \cite{boon2020flux}. We first present the subdomain discretization, followed by the discretization of the interface variables, and end with the definition of the flux-mortar method.

\subsection{Subdomain discretization}
\label{sub: Subdomain discretization}

For a subdomain $\Omega_i$, let $\Omega_{h,i}$ be a shape-regular polytopal
tessellation with typical mesh size $h$. The grids $\Omega_{h,i}$ and $\Omega_{h,j}$ may be
non-matching along the interface $\Gamma_{ij}$. 
For the subdomain discretizations we employ the MPFA method \cite{aavatsmark2002introduction,aavatsmark1998discretization,edwards1998finite,edwards2002unstructured}. More specifically, we will use the MPFA-O method, but omit the suffix for the sake of readability. After describing the MPFA method in \Cref{ssub: MPFA}, we introduce the MFMFE method in \Cref{ssub: MFMFE} which provides the setting for \emph{a priori} analysis. 

\subsubsection{Multipoint flux approximation finite volume method} 
\label{ssub: MPFA}

In order to describe the MPFA Finite Volume method, we briefly introduce some geometric notation. Let $\bx_i$ be the barycenter of cell $\omega_i$. Let $\gamma_{ij}$ be the facet between cells $\omega_i$ and $\omega_j$ with barycenter $\bx_{ij}$ and unit normal $\bm{\nu}_{ij}$. The vertices of the mesh are denoted $\br_k$ with coordinates $\br_k$. A dual grid is created by connecting each $\bx_i$ with all $\bx_{ij}$ and, if $n = 3$, with the barycenters of the edges. Each facet $\gamma_{ij}$ is thereby subdivided into sub-facets $\gamma_{ij,k}$ in which the index $k$ indicates that $\br_k$ is a vertex of $\gamma_{ij,k}$.

\begin{figure}
    \centering
    \includegraphics[width=0.5\textwidth]{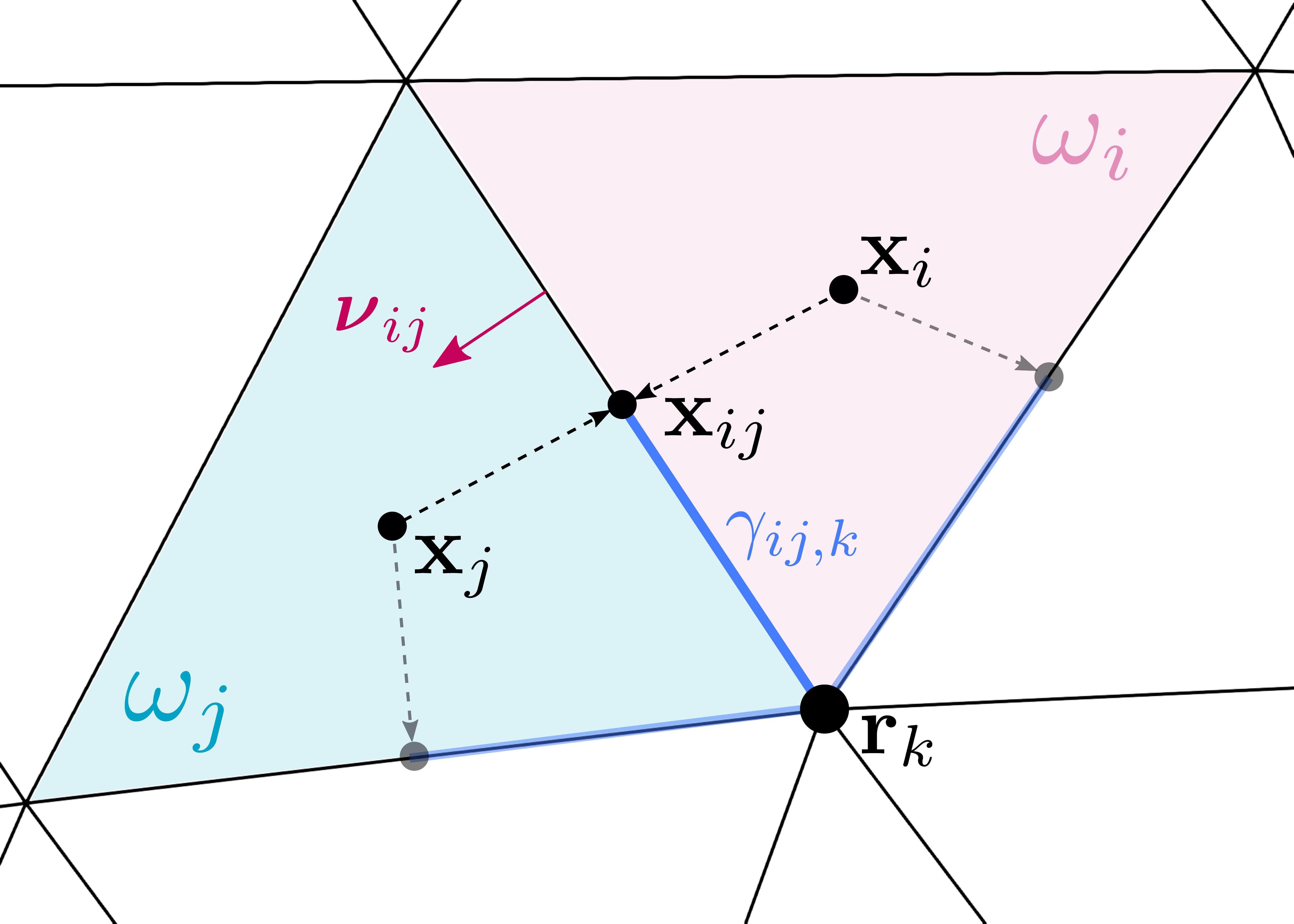}
    \caption{Illustration of the definitions used in the description of the MPFA method.}
    \label{fig:mpfa notation}
\end{figure}

The MPFA method is constructed as follows. First, we endow each $\gamma_{ij,k}$ with a sub-facet pressure $p_{ij,k}$. With these sub-facet pressures, we compute a discrete gradient $\nabla_{i,k}^h p \in \mathbb{R}^n$ that satisfies
\begin{align} \label{eq: discrete gradient}
	(\nabla_{i,k}^h p) \cdot (\bx_{ij} - \bx_i) = p_{ij,k} - p_i,
\end{align}
for all indices $j$ such that $\omega_j$ is a neighbor of $\omega_i$ that shares vertex $\br_k$. In order for $\nabla_{i,k}^h p$ to be well-defined, we assume that each cell-vertex pair $(i, k)$ has exactly $n$ sub-facets $\gamma_{ij, k}$. This is true for polygonal grids in 2D and for simplicial and hexahedral grids in 3D. 

Second, we formulate the flux continuity condition across each sub-facet $\gamma_{ij,k}$. Let $K_i$ be the conductivity in cell $\omega_i$, then this condition is given by
\begin{align} \label{eq: subface continuity}
    \bm{\nu}_{ij} \cdot (K_i \nabla_{i,k}^h p - K_j \nabla_{jk}^h p) = 0.
\end{align}

Third, we fix the vertex index $k$ and collect \eqref{eq: subface continuity} for all neighboring sub-facets $\gamma_{ij,k}$. Combined with \eqref{eq: discrete gradient}, this allows us to define the sub-facet pressures as a linear combination of the cell-center pressures. In turn, a substitution in \eqref{eq: discrete gradient} effectively eliminates the sub-facet pressures and defines the discrete gradient $\nabla_{i,k}^h$ as a linear operator acting on the cell-center pressures.

Finally, we let the discrete gradient define the flux across all facets $\gamma_{ij}$. Reusing our convention $\bm{\nu}_i = \bm{\nu}_{ij} = - \bm{\nu}_j$ for $i < j$, the MPFA method then solves 
\begin{align}
	\sum_{j, k} \int_{\gamma_{ij,k}} (- \bm{\nu}_i \cdot (K_i \nabla_{i,k}^h p)) 
	&= \int_{\omega_i} f, 
	&\forall \omega_i &\in \Omega_h.
\end{align}

\subsubsection{Multipoint flux mixed finite element method} 
\label{ssub: MFMFE}

On simplicial, quadrilateral, and hexahedral elements, the MPFA method has been related to the MFMFE method \cite{WheYot,IngWheYot,WheXueYot}, which is a MFE method with a BDM$_1$-type space for the velocity and a vertex quadrature rule for the velocity bilinear form $a(\cdot,\cdot)$. On general polytopal grids, the MPFA method has been formulated as a mimetic finite difference (MFD) method in \cite{LSY}.

We focus first on simplicial grids and the MFMFE formulation of the MPFA method \cite{WheYot}. In \Cref{sec: Analysis quads} we discuss smooth quadrilateral and hexahedral grids and comment on the extension to general quadrilateral and hexahedral, as well as general polytopes. 

For each $i$, let $V_{h, i} \times W_{h, i} \subset V_i \times W_i$ be
the BDM$_1$ pair of spaces on simplices \cite{BDM}, defined as follows. Let $\hE$ be the reference triangle or tetrahedron. For each element $\omega \in \Omega_h$ there exists an affine bijection mapping
$F_\omega: \hE \rightarrow \omega$.
Denote the Jacobian of $F$ by $\DF_\omega$ and let $J_\omega = |\mbox{det}(\DF_\omega)|$.
The BDM$_1$ spaces are defined on the reference element $\hE$ as
\begin{equation*}
\hat V(\hE) = P_1(\hE)^n, \quad \hat W(\hE) = P_0(\hE),
\end{equation*}
where $P_k$ denotes the space of polynomials of degree at most $k$. Note that $\hat{\nabla} \cdot \hat V(\hE) = \hat W(\hE)$
and that for
all $\hv \in \hat V(\hE)$ and for any facet $\he$ of $\hE$,
$\bnu_{\he} \cdot \hv \in P_1(\he)$. There are two degrees of freedom per facet in two dimensions and three in three dimensions, which can be chosen to be the values of $\bnu_{\he} \cdot \hv$ at the vertices of $\he$.

The BDM$_1$ spaces on any element $\omega \in \Omega_h$ are defined via the
transformations
\begin{align*}
 \bv \leftrightarrow \hv: \bv &=
 \frac{1}{J_\omega} \DF_\omega \hv \circ F_\omega^{-1}, &
 w \leftrightarrow \hat w: w &= \hat w \circ F_\omega^{-1},
\end{align*}
where the Piola transformation is used for the velocity space. The BDM$_1$ spaces on $\Omega_{h,i}$ are given by
\begin{equation}\label{eq:BDM}
\begin{aligned}
V_{h,i} &= \{\bv \in V_i \colon &
\bv|_\omega &\leftrightarrow \hv, &
\hv \in \hat V(\hE), 
\forall \omega \in \Omega_{h,i} \},\\[1.0ex]
W_{h,i} &= \{ w \in W_i \colon &
w|_\omega &\leftrightarrow \hat w, &
\hat w \in \hat W(\hE), 
\forall \omega \in \Omega_{h,i} \}.
\end{aligned}
\end{equation}
The BDM$_1$ pair is stable for the subproblem defined on $\Omega_i$, such that \cite{BDM}
\begin{subequations}\label{mixed-spaces}
\begin{align}
  & & 
  \nabla \cdot V_{h, i} &= W_{h, i}, \label{eq: div V = W} \\
  \forall \, w_{h,i} &\in W_{h,i}, 
  \exists \, 0 \ne \bm{v}_{h,i} \in V_{h,i}: &
  (\nabla\cdot\bv_{h,i},w_{h,i})_{\Omega_i}
  &\gtrsim \|\bv_{h,i}\|_{\div, \Omega_i} \|w_{h,i}\|_{\Omega_i}. \label{local-inf-sup}
\end{align}
\end{subequations}
%


The MFMFE method employs a vertex quadrature rule for the velocity bilinear form $a_i(\cdot,\cdot)$. For any element-wise continuous vector 
functions $\bphi$ and $\bpsi$ on $\Omega_{h,i}$, we denote by
\begin{equation*}
(\bphi,\bpsi)_{Q,\Omega_i} = \sum_{\omega \in \Omega_{h,i}}(\bphi,\bpsi)_{Q,\omega }
\end{equation*}
the application of the element-wise vertex quadrature rule for computing
$(\bphi,\bpsi)_{\Omega_i}$.  The integration on any element $\omega$ is performed by
mapping to the reference element $\hE$. Let $\tilde\bphi$ and
$\tilde\bpsi$ be the mapped functions on $\hE$, using the standard
change of variables. Since
$(\bphi,\bpsi)_\omega = (\tilde\bphi,\tilde\bpsi J_\omega)_{\hE}$, we define
\begin{equation*}
(\bphi,\bpsi)_{Q,\omega} = \frac{|\hE|}{s}\sum_{i=1}^s
\tilde\bphi(\rh_i)\cdot\tilde\bpsi(\rh_i) J_\omega(\rh_i) = \frac{|\hE|}{s}\sum_{i=1}^s
\bphi(\br_i)\cdot\bpsi(\br_i)J_\omega(\rh_i),
\end{equation*}
where $s$ is the number of vertices of $\omega$ and $\br_i$ and $\rh_i$,
$i = 1,\ldots,s$, are the vertices of $\omega$ and $\hE$, respectively.
Using this quadrature rule, the velocity bilinear form in the MFMFE is defined as
\begin{equation*}
a^h_i(\bu_i,\bv_i) := (K^{-1}\bu_i,\bv_i)_{Q,\Omega_i}, \quad
a^h(\bu,\bv) := \sum_i a^h_i(\bu_i,\bv_i).
\end{equation*}
The quadrature rule localizes the interaction of the velocity degrees of freedom around mesh vertices. This allows for local elimination of the velocity, resulting in a cell-centered finite volume system for the pressure, which is closely related to the MPFA system \cite{aavatsmark2007convergence} from \Cref{ssub: MPFA}. 
However, the quadrature rule introduces a non-conforming term in the numerical error, defined as
\begin{equation} \label{quad-error sigma}
  \sigma(\bphi,\bpsi): = a(\bphi,\bpsi) - a^h(\bphi,\bpsi).
\end{equation}
A bound on this term is shown in \Cref{sec: Analysis simplices}.

\subsection{Interface discretization and coupling}
\label{ssub: interface discretization}

With the subdomain discretizations defined above, we turn to the coupling at the interfaces.
Let $V_{h, i}^0$ denote the subspace of $V_{h, i}$ with zero normal trace on $\Gamma_i$ and let $V_{h, i}^\Gamma$ denote the trace space of $V_{h, i}$ on $\Gamma_i$:
\begin{align} 
  V_{h, i}^0 &:= \{\bm{v}_{h, i}^0 \in V_{h, i} :
  \ (\bm{\nu}_i\cdot\bm{v}_{h, i}^0)|_{\Gamma_i} = 0 \}, &
\quad V_h^0 &:= \bigoplus_i V_{h, i}^0, \label{eq: def Vh0} \\
V_{h, i}^\Gamma &:=  (\bm{\nu}_i\cdot V_{h, i})|_{\Gamma_i}, &
	V_h^\Gamma
	&:= \bigoplus_i V_{h, i}^\Gamma. \label{eq: def VhGamma}
\end{align}

Let $S_{H}$ be the following null-space:
\begin{align} \label{eq: definition S_H}
	S_{H, i} :=
	\{ w_{h,i} \in W_{h,i} :\
	b_i(\bm{v}_{h, i}^0, w_{h,i}) = 0, \
	\forall \bm{v}_{h, i}^0 \in V_{h, i}^0 \}, \quad S_H := \bigoplus_i S_{H,i},
\end{align}
in which the subscript $H$ refers to the characteristic subdomain size.
We note that in this case of Darcy flow, the local spaces can be characterized as
\begin{align} \label{SH-characterization}
	S_{H, i} &= \begin{cases}
		\mathbb{R}, & i \in I_{int}, \\
		0, & i \notin I_{int}.
	\end{cases}
\end{align}

For the interfaces, we introduce a shape-regular tessellation of
$\Gamma_{ij}$, denoted by $\Gamma_{h,ij}$, with a typical mesh size
$h_\Gamma$. Let $\Gamma_h = \bigcup_{i<j} \Gamma_{h,ij}$.
Let the discrete interface space $\Lambda_{h,ij}\subset
L^2(\Gamma_{ij})$ contain continuous or discontinuous piecewise
polynomials on $\Gamma_{h,ij}$ of degree $k_\Lambda$ and let $\Lambda_h = \bigoplus_{i<j}\Lambda_{h,ij}$.

The method involves incorporating the mortar flux data as Neumann boundary condition for the subdomain problems. Let $\mathcal{Q}_{h}: \Lambda \to V_{h}^\Gamma$ be a chosen projection operator and let $\mathcal{Q}_{h, i}: \Lambda \to V_{h, i}^\Gamma$ be its
restriction to $V_{h, i}^\Gamma$. Following \cite{boon2020flux}, we consider two choices for $\mathcal{Q}_{h, i}$, described in \Cref{ssub: flat projection,ssub: sharp projection}. The extension of the boundary data into the subdomains is described in \Cref{ssub: extension}.

\subsubsection{Projection onto the normal trace space of the velocity}
\label{ssub: flat projection}

The first choice is $\mathcal{Q}_{h, i} = \mathcal{Q}_{h, i}^\flat$, where
the operator $\mathcal{Q}_{h,i}^\flat: \Lambda \to V_{h, i}^\Gamma$ is the $L^2(\Gamma_i)$-orthogonal projection. It is computed 
for each $i$ by solving the problem:
Given $\lambda \in \Lambda$, find $\myQ_{h, i}^\flat\lambda \in
V_{h, i}^\Gamma$ such that
\begin{align} \label{eq: from-projection problem}
	(\lambda_i - \myQ_{h, i}^\flat\lambda, \xi_{h, i})_{\Gamma_i} = 0, \quad
	\forall \, \xi_{h, i} &\in V_{h, i}^\Gamma.
\end{align}

For the unique solvability of the mortar variable we need for the mortar space on a given interface to be controlled by the normal traces of the neighboring velocity spaces. We make the following assumption.

\begin{enumerate}[label=A\arabic*., ref=A\arabic*, start=1]
\item \label{A: mortar condition}
The following mortar condition holds:
\begin{align}\label{mortar-condition}
\forall \mu_h \in \Lambda_h, \quad \| \mu_h \|_{\Gamma_{ij}}
			&\lesssim
			\| \mathcal{Q}_{h, i}^\flat \mu_h \|_{\Gamma_{ij}}
			+ \| \mathcal{Q}_{h, j}^\flat \mu_h \|_{\Gamma_{ij}}, \quad
			\forall \, \Gamma_{ij}.
		\end{align}
        \end{enumerate}
        
\begin{remark}\label{rem:mortar-cond}
Assumption \ref{A: mortar condition} for $\mathcal{Q}_h^\flat$ is the conventional mortar assumption, see
e.g. \cite{ACWY,APWY}, implying that the mortar variable is controlled on
each interface by its $L^2$-projection onto the normal trace space on one of the two neighboring subdomains. It has been
  shown to hold for some very general mesh configurations \cite{APWY,PenYot}.
In particular, it 
is easy to satisfy in practice 
by choosing a sufficiently coarse mortar grid $\Gamma_h$ \cite{APWY}.
\end{remark}

\subsubsection{Projection onto the space of weakly continuous velocities}
\label{ssub: sharp projection}

The second option $\mathcal{Q}_{h, i} = \mathcal{Q}_{h, i}^\sharp$
is the orthogonal projection to the space of weakly continuous velocities. Following \cite{ACWY}, let the
space of weakly continuous fluxes $V_{h, c}$ and the associated trace
space $V_{h, c}^\Gamma$ be given by
\begin{subequations}
	\begin{align}
		V_{h, c}
		&:= \left\{ \bm{v}_h \in \bigoplus_i V_{h, i} :\
		\sum_i (\bm{\nu}_i \cdot \bm{v}_{h, i}, \mu_h)_{\Gamma_i} = 0, \
		\forall \mu_h \in \Lambda_h \right\}, \label{Vhc} \\
		V_{h, c}^\Gamma
		&:= \left\{ \xi_h \in V_h^\Gamma :\
		\sum_i (\xi_{h, i}, \mu_h)_{\Gamma_i} = 0, \
		\forall \mu_h \in \Lambda_h \right\}. \label{Vhc-Gamma}
	\end{align}
\end{subequations}

We construct the projection $\mathcal{Q}_h^\sharp: \Lambda \to V_{h}^\Gamma$
by solving the
following auxiliary problem \cite{ACWY,boon2020flux}: Given $\lambda
\in \Lambda$, find $\myQ_h^\sharp \lambda \in V_h^\Gamma$ and $\chi_h \in
\Lambda_h$ such that
\begin{subequations} \label{eq: to-projection problem}
\begin{align}
\sum_i (\lambda_i - \myQ_{h, i}^\sharp\lambda - \chi_h, \xi_{h, i})_{\Gamma_i} &= 0, &
\forall \xi_h &\in V_h^\Gamma, \label{eq: eq1 of psi sharp}\\
\sum_i (\myQ_{h, i}^\sharp \lambda, \mu_h)_{\Gamma_i} &= 0, &
\forall \mu_h &\in \Lambda_h. \label{eq: weak continuity of psi}
\end{align}
\end{subequations}
It is shown in \cite[Lemma~3.1]{boon2020flux} that, if \ref{A: mortar condition} holds, then problem \eqref{eq: to-projection problem} has a unique solution. Moreover, it is proved in \cite[Lemma~3.2]{boon2020flux} that $\myQ_h^\sharp\lambda$ is the $L^2$-projection of $\lambda$ onto
$V_{h, c}^\Gamma$, satisfying
\begin{align*}
\sum_i (\lambda_i - \myQ_{h, i}^\sharp\lambda, \xi_{h, i})_{\Gamma_i} &= 0, &
\forall \xi_h &\in V_{h, c}^\Gamma.
\end{align*}

For the unique solvability of the mortar variable in the case of
$\myQ_{h, i}^\sharp$, we make an assumption similar to assumption~\ref{A: mortar condition}.

\begin{enumerate}[label=A\arabic*., ref=A\arabic*, start=2]
\item \label{A: mortar condition sharp}
The following mortar condition holds:
\begin{align}\label{mortar-condition-sharp}
\forall \mu_h \in \Lambda_h, \quad \| \mu_h \|_{\Gamma_{ij}}
&\lesssim \| \mathcal{Q}_{h, i}^\sharp \mu_h \|_{\Gamma_{ij}}
+ \| \mathcal{Q}_{h, j}^\sharp \mu_h \|_{\Gamma_{ij}}, \quad \forall \, \Gamma_{ij}.
\end{align}
\end{enumerate}

\subsubsection{Discrete extension operator}
\label{ssub: extension}

Next, we define a discrete extension operator
$\myR_{h, i}: \Lambda \to V_{h, i}$. For given
$\lambda \in \Lambda$, we consider the following problem: Find $(\myR_{h, i} \lambda, p_{h, i}^\lambda, r_i)
\in V_{h, i} \times W_{h, i} \times S_{H, i}$ such that 
\begin{subequations} \label{eq: R_h problem Darcy}
\begin{align}
	a_i^h(\myR_{h, i} \lambda, \bm{v}_{h, i}^0)
	- b_i(\bm{v}_{h, i}^0, p_{h, i}^\lambda)
	&= 0
	, && \forall \bm{v}_{h, i}^0 \in V_{h, i}^0, \label{Rh-eq1}
	\\
	b_i(\myR_{h, i} \lambda, w_{h, i}) - (r_i, w_{h, i})_{\Omega_i}
	&= 0
	, && \forall w_{h, i} \in W_{h, i}, \label{Rh-eq2}
	\\
	(p_{h, i}^\lambda, s_i)_{\Omega_i}
	&= 0
	, && \forall s_i \in S_{H, i}, \label{Rh-eq3}
        \\
	\bm{\nu}_i \cdot \myR_{h, i} \lambda
	&= \myQ_{h, i}\lambda, && \text{ on } \Gamma_i. \label{Rh-bc}
\end{align}
\end{subequations}
We note that \eqref{Rh-bc} is an essential boundary condition and that,
for subdomains adjacent to $\partial \Omega$, the boundary condition
$p_i^\lambda = 0$ on $\partial \Omega_i \setminus \Gamma_i$ is natural and
has been incorporated in \eqref{Rh-eq1}. The use of the Lagrange multiplier
$r_i \in S_{H, i}$ ensures that the subproblem is solvable and that the auxiliary variable $p_{h, i}^\lambda$ is uniquely defined, i.e. orthogonal to $S_{H, i}$.

\begin{remark}
The use of $\myQ_{h, i}\lambda = \myQ_{h, i}^\flat\lambda$ from \eqref{eq: from-projection problem} in
\eqref{Rh-bc} leads to $\myR_{h, i} = \myR_{h, i}^\flat$, while
$\myQ_{h, i}\lambda = \myQ_{h, i}^\sharp\lambda$ from \eqref{eq: to-projection problem} results in $\myR_{h, i} = \myR_{h, i}^\sharp$. We will present the results that concern both variants by omitting the superscript.
\end{remark}

\subsection{Flux-mortar MFMFE method}
\label{sub: FM-MFMFE method}

Combining the subdomain and interface discretizations from \Cref{sub: Subdomain discretization,ssub: interface discretization}, let $\myR_h := \bigoplus_i \myR_{h, i}$
and let the composite spaces $V_h$ and $W_h$ be defined as
\begin{align} \label{eq: definition V_h}
	V_h &:= \bigoplus_i \left(V_{h, i}^0 \oplus \myR_{h, i} \Lambda_h\right)
	= V_h^0 \oplus \myR_h \Lambda_h, &
	W_h &:= \bigoplus_i W_{h, i}.
\end{align}
We are now ready to define the flux-mortar MFMFE method for
problem \eqref{eq: general form}:
Find $(\bm{u}_h^0, \lambda_h, p_h) \in V_h^0 \times \Lambda_h
\times W_h$ such that
\begin{subequations} \label{dd-formulation}
\begin{align}
	\sum_i a^h_{i}(\bm{u}_{h,i}^0 + \myR_{h,i} \lambda_h, \bm{v}_{h, i}^0)
	- b_i(\bm{v}_{h, i}^0, p_{h,i})
	& = 0,
	& \forall \bm{v}_h^0 \in V_h^0, \label{dd-1} 
	\\
	\sum_i a^h_i(\bm{u}_{h,i}^0 + \myR_{h,i} \lambda_h, \myR_{h,i} \mu_h)
	- b_i(\myR_{h,i} \mu_h, p_{h,i})
	& = 0,
	& \forall \mu_h \in \Lambda_h, \label{dd-2}
        \\
	\sum_i b_i(\bm{u}_{h,i}^0 + \myR_{h,i} \lambda_h), w_{h,i})
	& = (f, w_{h})_{\Omega},
	& \forall w_h \in W_h. \label{dd-3}
\end{align}
\end{subequations}
To shorten notation, let $\bm{u}_h := \bm{u}_h^0 + \myR_h \lambda_h$ and
$\bm{v}_h := \bm{v}_h^0 + \myR_h \mu_h$.
Then \eqref{dd-formulation} can be equivalently
written as: Find $\bm{u}_h \in V_h$ and $p_h \in W_h$ such that
\begin{subequations} \label{eq: general form-h}
	\begin{align}
		a^h(\bm{u}_h, \bm{v}_h)
		- b(\bm{v}_h, p_h)
		&= 0,
		& \forall \bm{v}_h &\in V_h, \label{eq: general form-h 1}\\
		b(\bm{u}_h, w_h)
		&= (f, w_h)_\Omega,
		& \forall w_h &\in W_h. \label{eq: general form-h 2}
	\end{align}
\end{subequations}
Note that the flux-mortar mixed finite element method \eqref{eq: general form-h}
is a non-conforming discretization of the weak formulation \eqref{eq: general form}, since in general $V_h \not\subset V$. We further emphasize that the
discrete trial and test functions from $V_h$ are naturally decomposed
into internal and interface degrees of freedom using $\myR_h$.

\section{Well-posedness and error analysis on simplicial grids}
\label{sec: Analysis simplices}

This section concerns the \emph{a priori} analysis of the flux-mortar MFMFE method proposed in \Cref{sec: Numerical method}. We first show that the discrete problem is well-posed in \Cref{sub: well-posedness simplices} and then present the error analysis in \Cref{sub: error analysis simplices}. 

\subsection{Well-posedness}
\label{sub: well-posedness simplices}

We follow the abstract analysis developed in \cite[Section~2.4]{boon2020flux} with modifications to take into account the non-conformity due to the use of the quadrature rule in $a^h(\cdot,\cdot)$. We begin with a variant of \cite[Theorem~2.1]{boon2020flux}.

\begin{theorem} \label{thm:well-posed}
Assume that problem \eqref{eq: R_h problem Darcy} has a unique solution and
  the resulting extension operator $\myR_h: \Lambda \to V_h$ is
  continuous, i.e. 
  \begin{align}\label{R-cont}
  \| \myR_h \lambda \|_V &\lesssim \| \lambda \|_\Lambda, \quad
    \forall \, \lambda \in \Lambda.
  \end{align}
Assume in addition that the following four inequalities hold:
\begin{subequations} \label{ineqs: Brezzi conditions}
\begin{align}
	\forall & \bm{u}_h, \bm{v}_h \in V_h:
	&  a^h(\bm{u}_h, \bm{v}_h) &\lesssim \| \bm{u}_h \|_V \| \bm{v}_h \|_V,
	\label{ineq: a_cont}\\
	\forall & \bm{v}_h \in V_h \text{ and } w_h \in W_h:
	&  b(\bm{v}_h, w_h) &\lesssim \| \bm{v}_h \|_V \| w_h \|_W,
	\label{ineq: b_cont}\\
	\forall & \bm{v}_h \in V_h \text{ with } b(\bm{v}_h, w_h) = 0 \ \forall w_h \in W_h:
	&  a^h(\bm{v}_h, \bm{v}_h) &\gtrsim \| \bm{v}_h \|_V^2,
	\label{ineq: a_coercive}\\
	\forall & w_h \in W_h, \ \exists \, 0 \ne \bm{v}_h \in V_h \text{ such that}:
	&  b(\bm{v}_h, w_h) &\gtrsim \| \bm{v}_h \|_V \| w_h \|_W.
	\label{ineq: b_infsup}
\end{align}
\end{subequations}
Then the discrete problem \eqref{eq: general form-h} admits a unique
solution that satisfies
\begin{align}\label{stab-est}
\| \bm{u}_h \|_V + \| p_h \|_W \lesssim \| f \|_{\Omega}.
\end{align}
\end{theorem}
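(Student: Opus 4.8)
The natural strategy is to reduce \eqref{eq: general form-h} to a standard saddle-point problem on $V_h \times W_h$ and then invoke the classical Babu\v{s}ka--Brezzi theory, treating the quadrature-induced non-conformity entirely through the hypotheses \eqref{ineqs: Brezzi conditions}. Since $V_h$ and $W_h$ are finite-dimensional and $V_h = V_h^0 \oplus \myR_h \Lambda_h$ is well-defined by \eqref{R-cont} together with the solvability of \eqref{eq: R_h problem Darcy}, it suffices to verify that the bilinear forms $a^h(\cdot,\cdot)$ and $b(\cdot,\cdot)$ restricted to these spaces satisfy the four Brezzi conditions: continuity of $a^h$ \eqref{ineq: a_cont}, continuity of $b$ \eqref{ineq: b_cont}, coercivity of $a^h$ on the kernel $Z_h := \{\bm{v}_h \in V_h : b(\bm{v}_h, w_h) = 0 \ \forall w_h \in W_h\}$ \eqref{ineq: a_coercive}, and the inf-sup condition for $b$ \eqref{ineq: b_infsup}. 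But these are precisely the assumed inequalities, so the abstract theory yields existence and uniqueness of $(\bm{u}_h, p_h) \in V_h \times W_h$ solving \eqref{eq: general form-h}, and correspondingly of $(\bm{u}_h^0, \lambda_h, p_h)$ once we identify the components of $\bm{u}_h$ via the direct sum decomposition.

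For the stability bound \eqref{stab-est}, I would proceed in the usual two steps. First, to control $\bm{u}_h$: by the inf-sup condition \eqref{ineq: b_infsup} applied to $p_h \in W_h$, there exists $\bm{v}_h \in V_h$ with $b(\bm{v}_h, p_h) \gtrsim \|\bm{v}_h\|_V \|p_h\|_W$; plugging this into \eqref{eq: general form-h 1} and using continuity \eqref{ineq: a_cont} gives
\begin{align*}
\|\bm{v}_h\|_V \|p_h\|_W \lesssim b(\bm{v}_h, p_h) = a^h(\bm{u}_h, \bm{v}_h) \lesssim \|\bm{u}_h\|_V \|\bm{v}_h\|_V,
\end{align*}
hence $\|p_h\|_W \lesssim \|\bm{u}_h\|_V$. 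Second, observe that the solution velocity $\bm{u}_h$ need not lie in $Z_h$, so I would split $\bm{u}_h = \bm{u}_h^Z + \bm{u}_h^\perp$, where $\bm{u}_h^\perp \in V_h$ is chosen (again via \eqref{ineq: b_infsup}) to satisfy $b(\bm{u}_h^\perp, w_h) = b(\bm{u}_h, w_h) = (f, w_h)_\Omega$ for all $w_h \in W_h$ with $\|\bm{u}_h^\perp\|_V \lesssim \|f\|_\Omega$, so that $\bm{u}_h^Z := \bm{u}_h - \bm{u}_h^\perp \in Z_h$. Then coercivity \eqref{ineq: a_coercive} gives $\|\bm{u}_h^Z\|_V^2 \lesssim a^h(\bm{u}_h^Z, \bm{u}_h^Z) = a^h(\bm{u}_h, \bm{u}_h^Z) - a^h(\bm{u}_h^\perp, \bm{u}_h^Z)$; using \eqref{eq: general form-h 1} with test function $\bm{u}_h^Z \in V_h$, the term $a^h(\bm{u}_h, \bm{u}_h^Z) = b(\bm{u}_h^Z, p_h) = 0$ since $\bm{u}_h^Z \in Z_h$, and continuity \eqref{ineq: a_cont} bounds the remaining term by $\|\bm{u}_h^\perp\|_V \|\bm{u}_h^Z\|_V$. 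This yields $\|\bm{u}_h^Z\|_V \lesssim \|\bm{u}_h^\perp\|_V \lesssim \|f\|_\Omega$, and the triangle inequality gives $\|\bm{u}_h\|_V \lesssim \|f\|_\Omega$; combined with the first step, $\|p_h\|_W \lesssim \|f\|_\Omega$ as well.

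The main obstacle is not in this abstract argument, which is entirely standard once the four Brezzi conditions are in hand, but rather in the fact that these conditions are stated as hypotheses here and must be verified separately. I would expect the later analysis (after \Cref{thm:well-posed}) to devote its effort to checking \eqref{ineq: a_coercive} and \eqref{ineq: b_infsup} for the concrete spaces $V_h$, $W_h$: coercivity of $a^h$ on the kernel requires the positive-definiteness of the vertex quadrature rule for the Piola-mapped BDM$_1$ velocities (a known property on simplices), while the inf-sup condition for $b$ must be assembled from the local inf-sup \eqref{local-inf-sup} together with the mortar condition \ref{A: mortar condition} (or \ref{A: mortar condition sharp}) governing the interface degrees of freedom $\myR_h \Lambda_h$. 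Within the present theorem, however, the only subtlety is the bookkeeping of the direct-sum decomposition $V_h = V_h^0 \oplus \myR_h \Lambda_h$ to pass between the compact form \eqref{eq: general form-h} and the expanded form \eqref{dd-formulation}, which is a matter of unwinding definitions rather than a genuine difficulty.
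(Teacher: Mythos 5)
Your proposal is correct and follows essentially the same route as the paper: the paper's proof is a one-line appeal to classical saddle point theory given the four Brezzi conditions (citing \cite{boffi2013mixed}), and your argument simply writes out that standard theory explicitly (inf-sup to bound $p_h$, a divergence lifting plus kernel coercivity to bound $\bm{u}_h$). No gap; the extra detail you supply is exactly what the cited classical result contains.
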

\begin{proof}
Assumption \eqref{R-cont} ensures that the space $V_h$ is well-defined. The well-posedness of the discrete problem then follows directly from \eqref{ineqs: Brezzi conditions} and classical saddle point theory \cite{boffi2013mixed}.
\end{proof}

The verification of the conditions of Theorem~\ref{thm:well-posed} is presented in the next two lemmas, the first of which is proved in \cite[Lemma~3.3]{boon2020flux}.

\begin{lemma}\label{lem:R}
Problem \eqref{eq: R_h problem Darcy} has a unique solution, which satisfies \eqref{R-cont}. 
\end{lemma}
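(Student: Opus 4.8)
The plan is to establish the two assertions---unique solvability of the saddle-point system \eqref{eq: R_h problem Darcy} and the continuity bound \eqref{R-cont}---separately, exploiting that \eqref{eq: R_h problem Darcy} is a mixed system with an essential (Neumann-type) boundary condition \eqref{Rh-bc}. First I would reduce to a homogeneous-boundary problem: write $\myR_{h,i}\lambda = \bm{w}_{h,i} + \bm{z}_{h,i}$, where $\bm{z}_{h,i}\in V_{h,i}$ is a fixed discrete extension of the boundary data $\myQ_{h,i}\lambda\in V_{h,i}^\Gamma$ (such an extension exists by the definition \eqref{eq: def VhGamma} of the trace space, and can be chosen with $\|\bm{z}_{h,i}\|_{\div,\Omega_i}\lesssim \|\myQ_{h,i}\lambda\|_{\Gamma_i}$ by a standard scaling argument on the reference element together with shape-regularity), and $\bm{w}_{h,i}\in V_{h,i}^0$ is the new unknown. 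The system for $(\bm{w}_{h,i},p_{h,i}^\lambda,r_i)\in V_{h,i}^0\times W_{h,i}\times S_{H,i}$ then has the classical double-saddle-point form with right-hand sides $-a_i^h(\bm{z}_{h,i},\cdot)$ in the first equation and $-b_i(\bm{z}_{h,i},\cdot)$ in the second.

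Next I would verify the Brezzi conditions for this reduced system. Continuity of $b_i$ and of the pairing $(r_i,w_{h,i})_{\Omega_i}$ is immediate. Coercivity of $a_i^h$ on the kernel: the key point is that the MFMFE vertex quadrature rule $(\cdot,\cdot)_{Q,\Omega_i}$ is known to define an inner product on $V_{h,i}$ equivalent to $(\cdot,\cdot)_{\Omega_i}$ (this is the standard MFMFE fact, cf.\ \cite{WheYot}; it follows by mapping to $\hE$ and checking the local quadrature is positive definite on $P_1(\hE)^n$ for the BDM$_1$ degrees of freedom), so using \eqref{K-spd} we get $a_i^h(\bm{v},\bm{v})\gtrsim \|\bm{v}\|_{\Omega_i}^2$, and on the relevant kernel $\nabla\cdot\bm{v}$ lies in $S_{H,i}$, which by \eqref{SH-characterization} together with the constraint from $S_{H,i}$ is controlled, giving full $\|\cdot\|_{\div,\Omega_i}$ coercivity. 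The two inf-sup conditions---for $b_i$ against $W_{h,i}/S_{H,i}$ and for the $S_{H,i}$-pairing---follow from the BDM$_1$ stability \eqref{local-inf-sup}--\eqref{eq: div V = W}: the surjectivity $\nabla\cdot V_{h,i}^0 \supseteq W_{h,i}\cap S_{H,i}^\perp$ handles the former (this is where one uses that functions in $S_{H,i}$ are orthogonal to $\nabla\cdot V_{h,i}^0$ by the very definition \eqref{eq: definition S_H}), and the $S_{H,i}$-pairing is inf-sup trivially since $S_{H,i}\subset W_{h,i}$ is at most one-dimensional. Classical saddle-point theory then yields a unique solution and the a priori bound $\|\bm{w}_{h,i}\|_{\div,\Omega_i}+\|p_{h,i}^\lambda\|_{\Omega_i}+\|r_i\|_{\Omega_i}\lesssim \|\bm{z}_{h,i}\|_{\div,\Omega_i}$.

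Finally, to obtain \eqref{R-cont} I would combine the above with the bound on the discrete extension and with continuity of the projection operator: $\|\myR_{h,i}\lambda\|_{\div,\Omega_i}\lesssim \|\bm{z}_{h,i}\|_{\div,\Omega_i}\lesssim \|\myQ_{h,i}\lambda\|_{\Gamma_i}\lesssim \|\lambda_i\|_{\Gamma_i}$, the last step because both $\myQ_{h,i}^\flat$ (an $L^2$-orthogonal projection, hence nonexpansive) and $\myQ_{h,i}^\sharp$ (the $L^2$-projection onto $V_{h,c}^\Gamma$ by \cite[Lemma~3.2]{boon2020flux}, also nonexpansive) are bounded on $L^2(\Gamma_i)$; summing over $i$ gives $\|\myR_h\lambda\|_V\lesssim \|\lambda\|_\Lambda$. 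I expect the main obstacle to be the kernel-coercivity step: one must be careful that the divergence of a velocity in the constrained kernel lands in exactly the subspace where the quadrature-perturbed form $a_i^h$ still controls the full $H(\div)$ norm, which hinges on the interplay of \eqref{eq: div V = W}, the definition of $S_{H,i}$ in \eqref{eq: definition S_H}, and the spectral equivalence of the vertex quadrature rule---none individually hard, but the bookkeeping of which orthogonality is used where is the delicate part. (Since this lemma is cited as \cite[Lemma~3.3]{boon2020flux}, I would in practice simply invoke that reference, the argument above being the verification that its hypotheses---the MFMFE-specific quadrature equivalence and \eqref{local-inf-sup}---are met in the present setting.)
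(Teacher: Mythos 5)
The paper does not prove this lemma in-house: it invokes \cite[Lemma~3.3]{boon2020flux} verbatim, relying on the spectral equivalence of $a^h$ and $a$ (established in \cite{WheYot} and recorded in Lemma~\ref{lem:brezzi}) to transfer that result to the quadrature-perturbed setting. Your overall architecture --- lift the boundary data, verify the Brezzi conditions for the reduced homogeneous system, use the quadrature equivalence for coercivity, and use nonexpansiveness of $\myQ_{h,i}^\flat$ and $\myQ_{h,i}^\sharp$ in the final chain --- matches what that cited proof does, and your identification of the MFMFE-specific ingredient is correct.

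However, there is a genuine gap at the very first step: the claim that a discrete extension $\bm{z}_{h,i}\in V_{h,i}$ of the data $\myQ_{h,i}\lambda$ with $\|\bm{z}_{h,i}\|_{\div,\Omega_i}\lesssim\|\myQ_{h,i}\lambda\|_{\Gamma_i}$ can be obtained ``by a standard scaling argument on the reference element.'' It cannot. For \emph{any} extension supported on the layer of boundary elements, on each boundary element $\omega$ with face $\gamma\subset\Gamma_i$ one has $\int_\omega\nabla\cdot\bm{z}_{h,i}=\int_\gamma\myQ_{h,i}\lambda$, hence $\|\nabla\cdot\bm{z}_{h,i}\|_\omega\ge|\omega|^{-1/2}\bigl|\int_\gamma\myQ_{h,i}\lambda\bigr|\sim h^{-1/2}\,|\gamma|^{-1/2}\bigl|\int_\gamma\myQ_{h,i}\lambda\bigr|$, so the $H(\div)$ norm of a local extension necessarily degenerates like $h^{-1/2}$ unless the data has zero mean on every face. (The $L^2$ part of the velocity is fine --- it even gains a factor $h^{1/2}$ --- but the divergence does not.) Since your a priori bound for the reduced system is $\|\bm{w}_{h,i}\|_{\div,\Omega_i}+\|p_{h,i}^\lambda\|_{\Omega_i}\lesssim\|\bm{z}_{h,i}\|_{\div,\Omega_i}$, this loss propagates into \eqref{R-cont} and destroys the uniform-in-$h$ continuity, which is the entire content of the lemma. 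This is precisely why $\myR_{h,i}$ is defined through the discrete Neumann problem \eqref{eq: R_h problem Darcy}: the constraint \eqref{Rh-eq2} forces $\nabla\cdot\myR_{h,i}\lambda=r_i\in S_{H,i}$, a space of dimension at most one, whence $\|\nabla\cdot\myR_{h,i}\lambda\|_{\Omega_i}=\|r_i\|_{\Omega_i}\lesssim\|\myQ_{h,i}\lambda\|_{\Gamma_i}$ directly; and the $H(\div)$-stable lifting needed to close the argument is built not by scaling but by solving an auxiliary \emph{continuous} Neumann problem on $\Omega_i$ (giving $\bv^*=-\nabla\varphi$ with $\|\bv^*\|_{\div,\Omega_i}\lesssim\|\myQ_{h,i}\lambda\|_{\Gamma_i}$ by elliptic regularity) and then applying $\Pi_i^V$, whose trace property \eqref{eq: Pi-trace} and continuity \eqref{Pi-cont} preserve both the boundary data and the bound. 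Replacing your local lifting by this global one repairs the proof; the remaining steps you outline go through.
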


\begin{lemma}\label{lem:brezzi}
The inequalities \eqref{ineqs: Brezzi conditions} hold.
\end{lemma}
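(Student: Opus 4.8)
The plan is to verify the four inequalities in \eqref{ineqs: Brezzi conditions} in turn, relying on the decomposition $V_h = V_h^0 \oplus \myR_h \Lambda_h$ together with the continuity \eqref{R-cont} of $\myR_h$ established in Lemma~\ref{lem:R}. For the continuity bounds \eqref{ineq: a_cont} and \eqref{ineq: b_cont}, the key point is that $a^h(\cdot,\cdot)$ is a positive semi-definite bilinear form coming from a vertex quadrature rule applied to $(K^{-1}\cdot,\cdot)$; on the reference element this quadrature is equivalent to the $L^2$ inner product on the finite-dimensional space $\hat V(\hE) = P_1(\hE)^n$, and a scaling argument via the Piola transform transfers this equivalence to each physical element with constants depending only on shape-regularity and on $k_{min}, k_{max}$. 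Thus $a^h(\bm{v}_h,\bm{v}_h) \lesssim \|\bm{v}_h\|^2_{0,\Omega}$ (in fact $a^h$ induces a norm equivalent to the broken $L^2$-norm on $\bigoplus_i V_{h,i}$), and a Cauchy–Schwarz step gives \eqref{ineq: a_cont}; the bound \eqref{ineq: b_cont} is immediate from the definition of $b$ and Cauchy–Schwarz. For these two bounds one also needs $\|\bm{v}_h\|_V \lesssim$ the relevant quantities, which follows by writing $\bm{v}_h = \bm{v}_h^0 + \myR_h \mu_h$, bounding $\|\bm{v}_h^0\|_V$ directly and $\|\myR_h\mu_h\|_V$ via \eqref{R-cont}.

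The coercivity condition \eqref{ineq: a_coercive} is the heart of the argument. One takes $\bm{v}_h \in V_h$ with $b(\bm{v}_h, w_h) = 0$ for all $w_h \in W_h$; since $\nabla\cdot V_{h,i} = W_{h,i}$ by \eqref{eq: div V = W}, this forces $\nabla\cdot\bm{v}_{h,i} = 0$ on each $\Omega_i$, so $\|\bm{v}_h\|_V^2 = \sum_i \|\bm{v}_{h,i}\|^2_{0,\Omega_i}$. It then remains to show $a^h(\bm{v}_h,\bm{v}_h) \gtrsim \sum_i \|\bm{v}_{h,i}\|^2_{0,\Omega_i}$, which again reduces, element by element, to the positive-definiteness of the vertex quadrature rule on $\hat V(\hE)$ together with the spd bounds \eqref{K-spd} on $K$. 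This is exactly the local quadrature-coercivity estimate established for the MFMFE method on simplices in \cite{WheYot}, and I would cite it; the only subtlety is to confirm it applies on the broken space with the subdomain-by-subdomain structure, which it does since $a^h$ splits as $\sum_i a^h_i$.

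The inf-sup condition \eqref{ineq: b_infsup} is where the flux-mortar construction genuinely enters, and I expect it to be the main obstacle. Given $w_h \in W_h = \bigoplus_i W_{h,i}$, the naive idea is to use the local inf-sup property \eqref{local-inf-sup} to pick $\bm{v}_{h,i} \in V_{h,i}$ with $b_i(\bm{v}_{h,i}, w_{h,i}) \gtrsim \|\bm{v}_{h,i}\|_{\div,\Omega_i}\|w_{h,i}\|_{\Omega_i}$, but such $\bm{v}_{h,i}$ need not lie in $V_h$ because its normal trace on $\Gamma_i$ need not be in the image $\myR_{h,i}\Lambda_h$ — indeed $V_h$ only contains interface fluxes that are projections/extensions of mortar functions. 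The resolution, following \cite[Lemma~2.3 / Lemma~2.4]{boon2020flux}, is to split $w_h = w_h^S + w_h^\perp$ into its component $w_h^S \in S_H$ and the $L^2$-orthogonal complement; for $w_h^\perp$ one uses the local inf-sup on $V_{h,i}^0$ after observing that $b_i(\bm{v}_{h,i}^0, \cdot)$ annihilates $S_{H,i}$ by definition \eqref{eq: definition S_H}, so a suitable $\bm{v}_h^0 \in V_h^0 \subset V_h$ does the job there; for the finite-dimensional piece $w_h^S$ (which by \eqref{SH-characterization} is spanned by the constants on interior subdomains) one constructs an interface flux $\myR_h\mu_h$ with controlled divergence by choosing $\mu_h \in \Lambda_h$ whose extension produces the right constant source on each interior subdomain — here the mortar condition \ref{A: mortar condition} (or \ref{A: mortar condition sharp}) and the surjectivity built into the definition of $\myR_{h,i}$ via \eqref{Rh-eq2} are what guarantees solvability, and \eqref{R-cont} controls the norm. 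Combining the two pieces and using $\dim S_H < \infty$ to absorb mesh-independent constants yields \eqref{ineq: b_infsup}. I would present this last step in the most detail, as it is the only one not reducible to standard mixed-method or MFMFE machinery.
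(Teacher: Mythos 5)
Your proposal is correct and follows essentially the same route as the paper, which simply cites \cite{WheYot} for the continuity and coercivity of $a^h$ (noting, as you do, that \eqref{eq: div V = W} reduces \eqref{ineq: a_coercive} to the $L^2$-coercivity of the quadrature form), treats \eqref{ineq: b_cont} as immediate, and cites \cite[Lemma~3.4]{boon2020flux} for \eqref{ineq: b_infsup}, whose proof is exactly the $S_H$-splitting you reconstruct. One small caution: the lemma is stated without assuming \ref{A: mortar condition}, and the inf-sup step does not actually need it --- what is required for the $S_H$ component is only the surjectivity of the coarse interface operator onto $S_H'$ together with the continuity \eqref{R-cont}, not the injectivity-type bound \eqref{mortar-condition}, so you should not import that hypothesis there.
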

\begin{proof}
  Inequalities \eqref{ineq: a_cont} and \eqref{ineq: a_coercive} state the continuity and coercivity of the discrete bilinear form $a^h(\cdot,\cdot)$. They have been verified in \cite{WheYot}. We note that, due to \eqref{eq: div V = W},
  \eqref{ineq: a_coercive} is equivalent to $a^h(\bm{v}_h, \bm{v}_h) \gtrsim
  \| \bm{v}_h \|_\Omega^2 \ \forall \bv_h \in V_h$. The continuity of $b(\cdot,\cdot)$ \eqref{ineq: b_cont} follows easily from its definition, while the inf-sup condition \eqref{ineq: b_infsup} has been established in \cite[Lemma~3.4]{boon2020flux}.
\end{proof}

A combination of Theorem~\ref{thm:well-posed}, Lemma~\ref{lem:R}, and Lemma~\ref{lem:brezzi} implies the following result.

\begin{corollary} \label{cor: well-posed simplices}
The discrete problem \eqref{eq: general form-h} has a unique
solution that satisfies \eqref{stab-est}.
\end{corollary}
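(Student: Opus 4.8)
The plan is to combine the three preceding results exactly as the corollary's phrasing suggests, so the proof is essentially bookkeeping. First I would invoke Lemma~\ref{lem:R} to establish that problem \eqref{eq: R_h problem Darcy} has a unique solution and that the induced extension operator $\myR_h$ satisfies the continuity bound \eqref{R-cont}; this is precisely the first hypothesis required by Theorem~\ref{thm:well-posed} and, as remarked there, it is what makes the composite space $V_h = V_h^0 \oplus \myR_h\Lambda_h$ well-defined. Next I would cite Lemma~\ref{lem:brezzi} to obtain the four Brezzi-type inequalities \eqref{ineqs: Brezzi conditions}: continuity of $a^h(\cdot,\cdot)$ and $b(\cdot,\cdot)$, coercivity of $a^h(\cdot,\cdot)$ on the kernel of $b(\cdot,\cdot)$, and the inf-sup condition for $b(\cdot,\cdot)$ on $V_h \times W_h$.

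With both hypotheses of Theorem~\ref{thm:well-posed} verified, the theorem applies verbatim and yields existence, uniqueness, and the stability estimate \eqref{stab-est}, namely $\| \bm{u}_h \|_V + \| p_h \|_W \lesssim \| f \|_{\Omega}$. That is all the corollary asserts, so the proof is complete at that point. I would write it as a two-sentence proof: one sentence collecting the hypotheses from Lemmas~\ref{lem:R} and~\ref{lem:brezzi}, and one sentence applying Theorem~\ref{thm:well-posed}.

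There is essentially no obstacle here, since all the substantive work has been deferred to Lemma~\ref{lem:R} (itself imported from \cite[Lemma~3.3]{boon2020flux}), Lemma~\ref{lem:brezzi} (drawing on \cite{WheYot} and \cite[Lemma~3.4]{boon2020flux}), and the abstract saddle-point machinery of \cite{boffi2013mixed} packaged in Theorem~\ref{thm:well-posed}. If I were forced to name a subtlety, it would be making sure that the norm on $V_h$ used in the Brezzi inequalities is the same broken $H(\div)$ norm $\|\cdot\|_V$ appearing in \eqref{stab-est}, and that the coercivity \eqref{ineq: a_coercive} is understood (via \eqref{eq: div V = W}) as an $L^2$-coercivity on $V_h$ that upgrades to the full $\|\cdot\|_V$ norm on the divergence-free-tested subspace — but both points are already handled inside Lemma~\ref{lem:brezzi} and Theorem~\ref{thm:well-posed}, so nothing new is needed.

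Concretely, the proof I would write is:
\begin{proof}
By Lemma~\ref{lem:R}, problem \eqref{eq: R_h problem Darcy} has a unique solution and the extension operator $\myR_h: \Lambda \to V_h$ satisfies the continuity estimate \eqref{R-cont}; by Lemma~\ref{lem:brezzi}, the four inequalities \eqref{ineqs: Brezzi conditions} hold. Thus all hypotheses of Theorem~\ref{thm:well-posed} are satisfied, and the assertion follows immediately from that theorem.
\end{proof}
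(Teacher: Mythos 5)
Your proposal matches the paper exactly: the corollary is stated there as an immediate consequence of combining Theorem~\ref{thm:well-posed} with Lemma~\ref{lem:R} (unique solvability of \eqref{eq: R_h problem Darcy} and the continuity \eqref{R-cont}) and Lemma~\ref{lem:brezzi} (the four conditions \eqref{ineqs: Brezzi conditions}), which is precisely the two-step bookkeeping you describe. Nothing further is needed.
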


The next result, which is a variant of \cite[Theorem~2.2]{boon2020flux}, concerns the unique solvability of the mortar variable.

\begin{theorem}\label{thm:lambda}
  Let \ref{A: mortar condition} hold in the case of $\myQ_{h, i}^\flat$ and let 
  \ref{A: mortar condition} and \ref{A: mortar condition sharp} hold in the case of $\myQ_{h, i}^\sharp$. Then the mortar solution $\lambda_h$ of \eqref{eq: general form-h} is unique.
\end{theorem}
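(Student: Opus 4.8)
The plan is to show that the mortar variable $\lambda_h$ is uniquely determined by reducing to the homogeneous problem: suppose $(\bm{u}_h^0, \lambda_h, p_h)$ solves \eqref{dd-formulation} with $f = 0$, and show $\lambda_h = 0$. By Corollary~\ref{cor: well-posed simplices} we already know $\bm{u}_h = \bm{u}_h^0 + \myR_h \lambda_h = 0$ and $p_h = 0$ in this case, so the task is to deduce $\lambda_h = 0$ from the single identity $\bm{u}_h^0 + \myR_h \lambda_h = 0$ in $V_h$, i.e. $\myR_h \lambda_h = -\bm{u}_h^0 \in V_h^0$. The key observation is that $\bm{\nu}_i \cdot \bm{v}_{h,i}^0 = 0$ on $\Gamma_i$ for every $\bm{v}_h^0 \in V_h^0$ by \eqref{eq: def Vh0}, whereas $\bm{\nu}_i \cdot \myR_{h,i}\lambda_h = \myQ_{h,i}\lambda_h$ on $\Gamma_i$ by the essential boundary condition \eqref{Rh-bc}. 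Hence $\myR_h \lambda_h \in V_h^0$ forces $\myQ_{h,i}\lambda_h = 0$ on $\Gamma_i$ for all $i$.

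The next step is to turn $\myQ_{h,i}\lambda_h = 0$ on each $\Gamma_i$ into $\lambda_h = 0$. On a fixed interface $\Gamma_{ij}$, the two neighboring conditions give $\myQ_{h,i}\lambda_h|_{\Gamma_{ij}} = 0$ and $\myQ_{h,j}\lambda_h|_{\Gamma_{ij}} = 0$, so that $\|\myQ_{h,i}\lambda_h\|_{\Gamma_{ij}} + \|\myQ_{h,j}\lambda_h\|_{\Gamma_{ij}} = 0$. In the case $\myQ_{h,i} = \myQ_{h,i}^\flat$, assumption~\ref{A: mortar condition}, i.e. the mortar condition \eqref{mortar-condition}, immediately yields $\|\lambda_h\|_{\Gamma_{ij}} = 0$; summing over all interfaces gives $\lambda_h = 0$ in $\Lambda_h$. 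In the case $\myQ_{h,i} = \myQ_{h,i}^\sharp$, the same argument applies verbatim using assumption~\ref{A: mortar condition sharp} (the mortar condition \eqref{mortar-condition-sharp}); assumption~\ref{A: mortar condition} is still needed in this case only to guarantee, via \cite[Lemma~3.1]{boon2020flux}, that the auxiliary projection problem \eqref{eq: to-projection problem} defining $\myQ_h^\sharp$ is well-posed and hence that $\myQ_{h,i}^\sharp$ (and $\myR_h^\sharp$) are meaningful to begin with.

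The main subtlety — and the only place one must be a little careful — is the passage from $\myR_h \lambda_h \in V_h^0$ to the pointwise trace statement. One should note that the decomposition in \eqref{eq: definition V_h} is $V_h = V_h^0 \oplus \myR_h \Lambda_h$, but this direct-sum structure is precisely what lets us conclude: any element of $V_h$ is written uniquely as $\bm{v}_h^0 + \myR_h \mu_h$, and an element lies in $V_h^0$ iff its $\myR_h\Lambda_h$-component vanishes. However, the cleanest route avoids even invoking directness and instead simply compares normal traces on $\Gamma$: from $\bm{u}_h = 0$ we read off $\bm{\nu}_i \cdot \bm{u}_{h,i} = 0$ on $\Gamma_i$, and since $\bm{\nu}_i \cdot \bm{u}_{h,i}^0 = 0$ there as well, \eqref{Rh-bc} gives $\myQ_{h,i}\lambda_h = \bm{\nu}_i \cdot \myR_{h,i}\lambda_h = \bm{\nu}_i \cdot (\bm{u}_{h,i} - \bm{u}_{h,i}^0) = 0$ on $\Gamma_i$. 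No genuine obstacle arises beyond bookkeeping; the substantive content is entirely carried by the mortar conditions \ref{A: mortar condition} / \ref{A: mortar condition sharp}, exactly as in the abstract result \cite[Theorem~2.2]{boon2020flux}.
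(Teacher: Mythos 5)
Your proposal is correct and follows essentially the same route as the paper: both arguments rest on the uniqueness of $\bm{u}_h$ from the stability estimate, the identification $\bm{\nu}_i\cdot\bm{u}_{h,i} = \myQ_{h,i}\lambda_h$ on $\Gamma_i$ coming from \eqref{Rh-bc} and the vanishing normal trace of $V_h^0$, and then the mortar conditions \ref{A: mortar condition} or \ref{A: mortar condition sharp} to recover $\lambda_h$. The only difference is cosmetic: the paper additionally inserts the discrete trace inequality \eqref{discrete-trace-Darcy} to obtain the quantitative bound $\|\lambda_h\|_\Gamma \lesssim h^{-1/2}\|\bm{u}_h\|_\Omega$ before invoking \eqref{stab-est}, whereas you argue directly on the homogeneous problem, which suffices for uniqueness.
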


\begin{proof}
We utilize the discrete trace inequality
\begin{align}\label{discrete-trace-Darcy}
  \forall \, i \in I_\Omega, \quad \|\bnu_i\cdot\bv_{h,i}\|_{\Gamma_i} 
  \lesssim h^{-\frac12} \| \bm{v}_{h,i}\|_{\Omega_i}, \quad
  \forall \bm{v}_{h,i} \in V_{h,i},
\end{align}
which follows from a simple scaling argument. Since $\bm{u}_h = \bm{u}_h^0 + \myR_h \lambda_h$, it holds that $\bnu_i\cdot\bu_{h,i} = \mathcal{Q}_{h, i} \lambda_h$. Then we use
\ref{A: mortar condition} or \ref{A: mortar condition sharp}
and \eqref{discrete-trace-Darcy} to obtain
	\begin{align*}
		\| \lambda_h \|_{\Gamma}
		\lesssim \sum_i \| \mathcal{Q}_{h, i} \lambda_h \|_{\Gamma_i}
                = \sum_i \|\bnu_i\cdot\bu_{h,i}\|_{\Gamma_i}
                \lesssim h^{-\frac12} \| \bu_h \|_\Omega.
	\end{align*}
	The result now follows from \eqref{stab-est}.
\end{proof}

\subsection{Error analysis}
\label{sub: error analysis simplices}

With the well-posedness of the discrete system verified, we continue with the error analysis. We begin by defining suitable interpolation operators in \Cref{ssub: Interpolation operators simplices}, which are used to derive the \emph{a priori} error estimates in \Cref{ssub: error estimate simplices}.

\subsubsection{Interpolation operators}
\label{ssub: Interpolation operators simplices}

We follow the construction in \cite[Section~3.3]{boon2020flux}.
The building blocks in the construction of the interpolant in $V_h$
are the canonical subdomain interpolation operators $\Pi_i^V: V_i \cap
(H^\epsilon(\Omega_i))^n \to V_{h, i}$ with $\epsilon > 0$, with
the properties
\begin{align} 
(\nabla \cdot (\bm{v}_{i} - \Pi_i^V \bm{v}_{i}), w_{h, i})_{\Omega_i} &= 0, &
  \forall w_{h, i} &\in W_{h, i}, \label{eq: commutativity}\\
(\bnu_i\cdot(\bm{v}_{i} - \Pi_i^V \bm{v}_{i}), \xi_{h, i})_{\Gamma_i} &= 0, &
  \forall \xi_{h, i} &\in V_{h, i}^\Gamma. \label{eq: Pi-trace}
\end{align}
The interpolant $\Pi_i^V$ is locally constructed on each element and satisfies the continuity property
\begin{equation}\label{Pi-cont}
  \forall \, \omega \in \Omega_{h,i}, \quad \|\Pi_i^V\bv_i\|_{1,\omega} \lesssim \|\bv_i\|_{1,\omega}, \quad \forall \, \bv_i \in H^1(\omega).
\end{equation}
In addition, let $\Pi_i^W: L^2(\Omega_i) \to W_{h, i}$ and
$\Pi_{ij}^\Lambda: L^2(\Gamma_{ij}) \to \Lambda_{h, ij}$ denote the
$L^2$-projection operators onto $W_{h, i}$ and $\Lambda_{h, ij}$,
respectively. Together with the projection $\mathcal{Q}_{h, i}^\flat$
onto $V_{h, i}^\Gamma$ introduced earlier, we recall the
approximation properties \cite{boffi2013mixed}:
\begin{subequations} \label{eqs: approx props}
	\begin{align}
		\| \bm{v} - \Pi_i^V \bm{v} \|_{\Omega_i}
		&\lesssim h^{r_v} \| \bm{v} \|_{r_v, \Omega_i},
		& 0 &< r_v \le 2, \label{eq: approx prop v_i}\\
		\| \nabla \cdot (\bm{v} - \Pi_i^V \bm{v}) \|_{\Omega_i}
		&\lesssim h^{r_w} \| \nabla \cdot \bm{v} \|_{r_w, \Omega_i},
		& 0 &\le r_w \le 1,
		\label{eq: approx prop div v}\\
		\| w - \Pi_i^W w \|_{\Omega_i}
		&\lesssim h^{r_w} \| w \|_{r_w, \Omega_i},
		& 0 &\le r_w \le 1,
		\label{eq: approx prop w}\\
		\| \mu - \Pi_{ij}^\Lambda \mu \|_{\Gamma_{ij}}
		&\lesssim h_\Gamma^{r_\Lambda} \| \mu \|_{r_\Lambda, \Gamma_{ij}},
		& 0 &\le r_\Lambda \le k_\Lambda + 1,
		\label{eq: approx prop lambda}\\
		\| \mu - \mathcal{Q}_{h, i}^\flat \mu \|_{\Gamma_{ij}}
		& \lesssim
		h^{r_v} \| \mu \|_{r_v, \Gamma_{ij}}, &
		0 & \le r_v \le 2.
		\label{eq: approx prop Q}
	\end{align}
\end{subequations}
Let $\tilde\Pi^V: \bigoplus_i (V_i \cap
(H^\epsilon(\Omega_i))^n) \to \bigoplus_i V_{h,i}$,
$\Pi^W: W \to W_h$ and $\Pi^\Lambda: \Lambda \to \Lambda_h$
be defined as $\tilde\Pi^V := \bigoplus \Pi_i^V$,
$\Pi^W := \bigoplus_i \Pi_i^W$ and $\Pi^\Lambda:= \bigoplus_{i<j}\Pi^\Lambda_{ij}$, respectively.

Next, we introduce the composite interpolant
$\Pi^V: \overline V \to V_h$, where
$\overline V = \{\bm{v} \in V: \bm{v}|_{\Omega_i} \in (H^\epsilon(\Omega_i))^n \text{ and }
  (\bm{\nu} \cdot
\bm{u})|_\Gamma \in \Lambda \}$
with $\epsilon > 0$.
Given $\bm{u} \in \overline V$ with normal trace $\lambda := (\bm{\nu} \cdot
\bm{u})|_\Gamma \in \Lambda$, we define $\Pi^V \bm{u} \in V_h$ as
\begin{subequations}\label{Pi-defn}
\begin{align}
  & \Pi_\flat^V \bm{u} := \myR_h^\flat \Pi^\Lambda\lambda
  + \tilde\Pi^V (\bm{u} - \myR_{h}^\flat \lambda)
= \myR_h^\flat (\Pi^\Lambda\lambda - \lambda) + \tilde\Pi^V \bm{u}, \label{Pi-flat}\\
  & \Pi_\sharp^V \bm{u} := \myR_h^\sharp \Pi^\Lambda\lambda
+ \tilde\Pi^V (\bm{u} - \myR_{h}^\flat \lambda)
= \Pi_\flat^V \bm{u} - \myR_h^\flat \Pi^\Lambda\lambda + \myR_h^\sharp \Pi^\Lambda\lambda. \label{Pi-sharp}
\end{align}
\end{subequations}
We note that \eqref{Rh-bc} for $\myR_{h,i}^\flat \lambda$,
and \eqref{eq: Pi-trace} imply
$ \bnu_i\cdot\Pi_i^V (\bm{u}_i - \myR_{h,i}^\flat \lambda) =
\mathcal{Q}_{h,i}^\flat\lambda - \mathcal{Q}_{h,i}^\flat\lambda = 0,
$
so \eqref{Pi-defn} gives $\Pi_\flat^V\bm{u} \in V_h^\flat$ and
$\Pi_\sharp^V\bm{u} \in V_h^\sharp$. In the following, the use of $\Pi^V$ indicates
that the result is valid for both choices.

\begin{lemma}[{\cite[Lemma~3.7]{boon2020flux}}] \label{lem: B-compatible}
The interpolation operator $\Pi^V$ is $b$-compatible:
\begin{align}\label{b-comp-darcy}
b(\bm{u} - \Pi^V \bm{u}, w_h) &= 0,& \forall \, w_h &\in W_h.
\end{align}
\end{lemma}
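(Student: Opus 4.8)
The plan is to verify the identity $b(\bm{u} - \Pi^V \bm{u}, w_h) = 0$ for all $w_h \in W_h$ by unpacking the definition of $\Pi^V \bm{u}$ and exploiting the structural properties of the building blocks, namely the commuting property \eqref{eq: commutativity} of $\tilde\Pi^V$ and the defining equation \eqref{Rh-eq2} of the extension operator $\myR_h$. Since the $\sharp$ and $\flat$ variants differ only by $\myR_h^\sharp \Pi^\Lambda\lambda - \myR_h^\flat \Pi^\Lambda\lambda$, and both $\myR_h^\sharp \mu_h$ and $\myR_h^\flat \mu_h$ satisfy \eqref{Rh-eq2} with the same right-hand side structure, it suffices to treat one case (say $\flat$) in detail and observe that the argument is insensitive to which extension operator is used; equivalently, one shows $b(\myR_h^\sharp \mu_h - \myR_h^\flat \mu_h, w_h)$ vanishes separately.

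First I would write, using \eqref{Pi-flat}, the decomposition
\begin{align*}
  \bm{u} - \Pi_\flat^V \bm{u} = (\bm{u} - \tilde\Pi^V \bm{u}) - \myR_h^\flat(\Pi^\Lambda \lambda - \lambda),
\end{align*}
so that, by linearity of $b$,
\begin{align*}
  b(\bm{u} - \Pi_\flat^V \bm{u}, w_h) = b(\bm{u} - \tilde\Pi^V \bm{u}, w_h) - b(\myR_h^\flat(\Pi^\Lambda\lambda - \lambda), w_h).
\end{align*}
The first term vanishes for every $w_h \in W_h$ directly by \eqref{eq: commutativity} applied subdomain-by-subdomain, since $w_h|_{\Omega_i} \in W_{h,i}$ and $b_i(\bm{v}_i, w_{h,i}) = (\nabla\cdot\bm{v}_i, w_{h,i})_{\Omega_i}$.

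For the second term, the key observation is that by \eqref{Rh-eq2}, for any $\mu \in \Lambda$ one has $b_i(\myR_{h,i}\mu, w_{h,i}) = (r_i^\mu, w_{h,i})_{\Omega_i}$ where $r_i^\mu \in S_{H,i}$; by the characterization \eqref{SH-characterization}, $r_i^\mu$ is a constant on interior subdomains and zero otherwise, and by \eqref{Rh-eq3} the auxiliary pressure is orthogonal to $S_{H,i}$. The value of $r_i^\mu$ is pinned down by testing \eqref{Rh-eq2} with $w_{h,i} = s_i \in S_{H,i}$ and using the divergence theorem together with the boundary condition \eqref{Rh-bc}: $(r_i^\mu, s_i)_{\Omega_i} = (\nabla\cdot\myR_{h,i}\mu, s_i)_{\Omega_i} = (\bm{\nu}_i\cdot\myR_{h,i}\mu, s_i)_{\Gamma_i} = (\myQ_{h,i}\mu, s_i)_{\Gamma_i}$, so $r_i^\mu$ depends linearly on the boundary data $\myQ_{h,i}\mu$. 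Taking $\mu = \Pi^\Lambda\lambda - \lambda$ and using the approximation property of the $L^2$-projection $\Pi^\Lambda$ together with the defining relation \eqref{eq: from-projection problem} for $\myQ_{h,i}^\flat$ — namely that $\myQ_{h,i}^\flat(\Pi^\Lambda\lambda)$ and $\myQ_{h,i}^\flat\lambda$ have the same $L^2(\Gamma_i)$-inner product against any $\xi_{h,i} \in V_{h,i}^\Gamma$, in particular against constants — one concludes $r_i^{\Pi^\Lambda\lambda - \lambda} = 0$ on every interior subdomain, hence $b(\myR_h^\flat(\Pi^\Lambda\lambda - \lambda), w_h) = 0$. (The boundary subdomains contribute nothing since $S_{H,i} = 0$ there.) The $\sharp$ case follows identically, using \eqref{eq: weak continuity of psi} to control $\myQ_{h,i}^\sharp$ against mortar test functions and the fact that constants restricted to $\Gamma_i$ pair appropriately.

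The main obstacle is the bookkeeping around the Lagrange multiplier $r_i$ and verifying that it genuinely vanishes for the argument $\Pi^\Lambda\lambda - \lambda$: this is where one must use that $\lambda = (\bm{\nu}\cdot\bm{u})|_\Gamma$ for an actual $H(\div)$-function $\bm{u}$, so that the jump of normal fluxes is consistent, and that $\Pi^\Lambda$ is the $L^2$-projection so it preserves the relevant moments against the constant test functions appearing in $S_{H,i}$. Once this compatibility at the level of interior-subdomain constants is established, the rest is a routine application of \eqref{eq: commutativity}. Since this is precisely Lemma~3.7 of \cite{boon2020flux}, I would either reproduce this short argument or simply cite it, noting that the use of the quadrature rule in $a^h(\cdot,\cdot)$ plays no role here because $b(\cdot,\cdot)$ is exact.
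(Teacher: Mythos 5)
The paper does not prove this lemma itself --- it is quoted directly from \cite[Lemma~3.7]{boon2020flux} with no in-paper argument --- so I assess your proof on its own terms. Your treatment of the $\flat$ variant is correct and complete: the splitting $\bm{u}-\Pi_\flat^V\bm{u}=(\bm{u}-\tilde\Pi^V\bm{u})-\myR_h^\flat(\Pi^\Lambda\lambda-\lambda)$, the use of \eqref{eq: commutativity} for the first piece, and the identification $r_i=|\Omega_i|^{-1}\int_{\Gamma_i}\myQ_{h,i}^\flat(\Pi^\Lambda\lambda-\lambda)=|\Omega_i|^{-1}\int_{\Gamma_i}(\Pi^\Lambda\lambda-\lambda)_i=0$ on interior subdomains (constants lie in both $V_{h,i}^\Gamma$ and $\Lambda_{h,ij}$) are all sound, and $r_i=0$ forces $b_i(\myR_{h,i}^\flat(\Pi^\Lambda\lambda-\lambda),w_{h,i})=0$ for every $w_{h,i}$, not just for $w_{h,i}\in S_{H,i}$. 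Two small quibbles: what you need from $\Pi^\Lambda$ is its orthogonality (moment preservation against constants), not its approximation property, and the remark about $\lambda$ being the normal trace of an $H(\div)$ field is not actually used in the $\flat$ case.

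The genuine gap is in the $\sharp$ case, which you assert ``follows identically.'' It does not: the multiplier $r_i$ is driven entirely by $\int_{\Gamma_i}\myQ_{h,i}\mu$, and the two extension operators differ precisely in that boundary datum, so you must actually prove $\int_{\Gamma_i}\myQ_{h,i}^\sharp\Pi^\Lambda\lambda=\int_{\Gamma_i}\lambda_i$. The obvious move --- testing the $L^2$-projection characterization of $\myQ_h^\sharp$ onto $V_{h,c}^\Gamma$ with the function equal to $1$ on $\Gamma_i$ and zero elsewhere --- is not available, because that test function is not weakly continuous and hence does not belong to $V_{h,c}^\Gamma$ as defined in \eqref{Vhc-Gamma}. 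One must instead work with the saddle-point form \eqref{eq: to-projection problem}: testing \eqref{eq: eq1 of psi sharp} with $\xi_{h,i}=1$ on $\Gamma_i$ yields $\int_{\Gamma_i}\myQ_{h,i}^\sharp\mu=\int_{\Gamma_i}\mu_i-\int_{\Gamma_i}\chi_h$, so the extra term involving the auxiliary multiplier $\chi_h$ must be shown to vanish. This follows by testing \eqref{eq: eq1 of psi sharp} with the indicator of a single interface $\Gamma_{ij}$ from both sides, adding the two identities, and using the sign convention $\mu_i+\mu_j=0$ together with \eqref{eq: weak continuity of psi} applied to that same indicator, which gives $\int_{\Gamma_{ij}}\chi_h=0$ on every interface and hence $\int_{\Gamma_i}\chi_h=0$. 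With that identity the $\sharp$ case closes; without it your argument for $\Pi_\sharp^V$ is incomplete, and your framing that ``both extensions satisfy \eqref{Rh-eq2} with the same right-hand side structure'' obscures exactly the point that has to be checked.
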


The approximation properties of the interpolants $\Pi_\flat^V$ and $\Pi_\sharp^V$ are given below.

\begin{lemma}[{\cite[Lemma~3.8]{boon2020flux}}]
  \label{lem: approximation prop Pi}
Assuming that $\bu$ has sufficient regularity, then
\begin{subequations} \label{eq: approx prop v}
\begin{align}
\| \bm{u} - \Pi_\flat^V \bm{u} \|_V
&\lesssim h^{r_v} \sum_i \| \bm{u} \|_{r_v, \Omega_i}
+ h^{r_w} \sum_i \| \nabla \cdot \bm{u} \|_{r_w, \Omega_i}
+ h_\Gamma^{r_\Lambda} \sum_{i < j} \| \lambda \|_{r_\Lambda, \Gamma_{ij}}, \label{approx-Pi-flat} \\
\| \bm{u} - \Pi_\sharp^V \bm{u} \|_V
&\lesssim
h^{r_v} \sum_i \| \bm{u} \|_{r_v, \Omega_i}
+ h^{r_w} \sum_i \| \nabla \cdot \bm{u} \|_{r_w, \Omega_i}
+ h_\Gamma^{r_\Lambda} \sum_{i < j} \| \lambda \|_{r_\Lambda, \Gamma_{ij}} 
+ h^{\tilde r_v} \sum_{i < j} \| \lambda \|_{\tilde r_v, \Gamma_{ij}}, \label{approx-Pi-sharp}
\end{align}
\end{subequations}
for $0 < r_v \le 2$, $0 \le r_w \le 1$, $0 \le r_\Lambda \le k_\Lambda+1$, 
and $0 \le \tilde r_v \le 2$.
\end{lemma}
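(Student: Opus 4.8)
The plan is to prove the two estimates in \eqref{eq: approx prop v} by decomposing $\bm{u} - \Pi^V \bm{u}$ according to the definitions in \eqref{Pi-defn} and then applying the triangle inequality together with the approximation properties from \eqref{eqs: approx props}, the continuity of $\myR_h$ from \eqref{R-cont}, and the interpolation stability \eqref{Pi-cont}. Starting with the flat case, I would use the second form in \eqref{Pi-flat}, namely $\Pi_\flat^V \bm{u} = \tilde\Pi^V \bm{u} + \myR_h^\flat(\Pi^\Lambda \lambda - \lambda)$, so that
\begin{align*}
\bm{u} - \Pi_\flat^V \bm{u} = (\bm{u} - \tilde\Pi^V \bm{u}) - \myR_h^\flat(\Pi^\Lambda \lambda - \lambda).
\end{align*}
The first term is handled by summing the elementwise/subdomain estimates \eqref{eq: approx prop v_i} and \eqref{eq: approx prop div v} over $i$, giving the $h^{r_v}\|\bm{u}\|_{r_v,\Omega_i}$ and $h^{r_w}\|\nabla\cdot\bm{u}\|_{r_w,\Omega_i}$ contributions to the $V$-norm. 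The second term is bounded using \eqref{R-cont}: $\|\myR_h^\flat(\Pi^\Lambda\lambda - \lambda)\|_V \lesssim \|\Pi^\Lambda\lambda - \lambda\|_\Lambda$, and then the mortar approximation property \eqref{eq: approx prop lambda} gives the $h_\Gamma^{r_\Lambda}\|\lambda\|_{r_\Lambda,\Gamma_{ij}}$ term. (One caveat I would check: $\myR_h$ as defined in \eqref{eq: R_h problem Darcy} takes $\Lambda$ as input, but here it is applied to $\Pi^\Lambda\lambda - \lambda$, which is fine since $\Lambda_h \subset \Lambda$ and the difference lies in $\Lambda$.)

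For the sharp case I would use the identity in \eqref{Pi-sharp}, $\Pi_\sharp^V \bm{u} = \Pi_\flat^V \bm{u} + (\myR_h^\sharp - \myR_h^\flat)\Pi^\Lambda\lambda$, so that
\begin{align*}
\bm{u} - \Pi_\sharp^V \bm{u} = (\bm{u} - \Pi_\flat^V \bm{u}) - (\myR_h^\sharp - \myR_h^\flat)\Pi^\Lambda\lambda.
\end{align*}
The first term is already estimated above. For the extra term, I would again use \eqref{R-cont} for both $\myR_h^\sharp$ and $\myR_h^\flat$, giving $\|(\myR_h^\sharp - \myR_h^\flat)\Pi^\Lambda\lambda\|_V \lesssim \|\Pi^\Lambda\lambda\|_\Lambda \lesssim \|\lambda\|_\Lambda$; but this only yields an $O(1)$ bound, not the claimed $h^{\tilde r_v}\|\lambda\|_{\tilde r_v,\Gamma_{ij}}$. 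To recover the extra power of $h$ one should instead observe that $\myR_h^\sharp - \myR_h^\flat$ applied to a \emph{weakly continuous} argument vanishes, or more precisely that $(\myR_h^\sharp - \myR_h^\flat)\mu_h$ has normal trace $\myQ_{h,i}^\sharp\mu_h - \myQ_{h,i}^\flat\mu_h$ on $\Gamma_i$, and this difference of projections is small: writing $\myQ_{h,i}^\sharp\mu_h - \myQ_{h,i}^\flat\mu_h = (\mu_h - \myQ_{h,i}^\flat\mu_h) - (\mu_h - \myQ_{h,i}^\sharp\mu_h)$ and noting $\myQ_h^\sharp\mu_h$ is the $L^2$-projection onto $V_{h,c}^\Gamma$, one can bound the normal-trace difference by $\|\lambda - \myQ_{h,i}^\flat\lambda\|_{\Gamma_{ij}} \lesssim h^{\tilde r_v}\|\lambda\|_{\tilde r_v,\Gamma_{ij}}$ via \eqref{eq: approx prop Q}, after replacing $\mu_h = \Pi^\Lambda\lambda$ by $\lambda$ using \eqref{eq: approx prop lambda}. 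One then needs a stability bound relating $\|(\myR_h^\sharp - \myR_h^\flat)\mu_h\|_V$ to the $L^2(\Gamma)$-norm of its normal trace, which follows from a discrete extension/stability argument analogous to \eqref{R-cont}.

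The main obstacle, therefore, is the last step of the sharp estimate: a naive application of \eqref{R-cont} loses the power of $h$, so the argument must exploit that the two projections $\myQ_{h,i}^\flat$ and $\myQ_{h,i}^\sharp$ differ only by a term that is itself an approximation error of $\lambda$ on $\Gamma_{ij}$, together with a careful bookkeeping of which mesh parameter ($h$ versus $h_\Gamma$) controls which term. Since this lemma is quoted verbatim from \cite[Lemma~3.8]{boon2020flux}, the cleanest route is to cite that reference for the detailed argument and only sketch the decomposition above; the Darcy-specific ingredients (the spaces $V_{h,i}$, $W_{h,i}$, $\Lambda_{h,ij}$ and the approximation properties \eqref{eqs: approx props}) are exactly those needed to instantiate the abstract result.
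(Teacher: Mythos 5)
The paper itself offers no proof of this lemma---it is quoted verbatim and delegated to \cite[Lemma~3.8]{boon2020flux}---so the relevant comparison is with that reference and with the paper's own proof of the quadrilateral analogue, Lemma~\ref{lem: approximation prop Pi RT}. Your decomposition is exactly the standard one: the flat case via the second identity in \eqref{Pi-flat}, the continuity \eqref{R-cont} applied to $\myR_h^\flat(\Pi^\Lambda\lambda-\lambda)$, and \eqref{eq: approx prop lambda}; and the sharp case via \eqref{Pi-sharp}, treating $(\myR_h^\sharp-\myR_h^\flat)\Pi^\Lambda\lambda$ as a discrete extension of the boundary data $\myQ_{h,i}^\sharp\Pi^\Lambda\lambda-\myQ_{h,i}^\flat\Pi^\Lambda\lambda$ and invoking the stability of \eqref{eq: R_h problem Darcy} from Lemma~\ref{lem:R}. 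This is precisely the mechanism the paper uses in the proof of Lemma~\ref{lem: approximation prop Pi RT}, so your route is the intended one, and you correctly identify that a naive application of \eqref{R-cont} loses the power of $h$. The only under-specified step is the bound on $\|\Pi^\Lambda\lambda-\myQ_{h}^\sharp\Pi^\Lambda\lambda\|_{\Gamma}$: the fact that $\myQ_h^\sharp$ is the $L^2$-projection onto $V_{h,c}^\Gamma$ gives best approximation in that subspace, but showing that $V_{h,c}^\Gamma$ approximates $\lambda$ to order $h^{\tilde r_v}$ requires the mortar condition \ref{A: mortar condition} (the standard argument of \cite{ACWY}, which constructs a weakly continuous corrector and uses \eqref{mortar-condition} to control it by the flat projection errors \eqref{eq: approx prop Q}). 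Since \ref{A: mortar condition} is anyway assumed whenever $\myQ^\sharp$ is well defined, this is a citation to fill in rather than a genuine gap.
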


\subsubsection{Error estimate}
\label{ssub: error estimate simplices}

We proceed with the error estimate in the case of simplicial grids. The following theorem is a variant of \cite[Theorem~2.3]{boon2020flux}, accounting for the quadrature error.

\begin{theorem}\label{thm:error}
It holds that
	\begin{align} \label{eq: Error estimate general}
		\| \bm{u} - \bm{u}_h \|_V
		+
		\| p - p_h \|_W
		\lesssim
		\| \tilde \Pi^V \bm{u} - \bm{u} \|_V
                + \| \Pi^V \bm{u} - \bm{u} \|_V
		+
		\| \Pi^W p - p \|_W
		+
		\myE_c + \myE_\sigma,
	\end{align}
with $\myE_c$ the consistency error defined as
\begin{align}\label{consist-error}
\myE_c := \sup_{ 0 \ne \bm{v}_h \in V_h}
\frac{a(\bm{u}, \bm{v}_h) - b(\bm{v}_h, p)}{\| \bm{v}_h \|_V},
\end{align}
and $\myE_\sigma$ the quadrature error defined as
\begin{equation}\label{quad-error}
  \myE_\sigma: = \sup_{ 0 \ne \bm{v}_h \in V_h}
\frac{\sigma(\tilde\Pi^V\bm{u}, \bm{v}_h)}{\| \bm{v}_h \|_V}
\end{equation}
with $\sigma(\cdot, \cdot)$ from \eqref{quad-error sigma}.
\end{theorem}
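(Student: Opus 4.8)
The plan is to run a standard Strang-type argument for the non-conforming mixed finite element method \eqref{eq: general form-h}, adapted to handle simultaneously the three sources of error: (i) the non-conforming velocity space $V_h \not\subset V$, captured by the consistency term $\myE_c$; (ii) the mortar approximation, captured by the difference between the $\flat$-interpolant $\tilde\Pi^V$ and the composite interpolant $\Pi^V$; and (iii) the quadrature rule in $a^h(\cdot,\cdot)$, captured by $\myE_\sigma$. First I would introduce the auxiliary errors $\bm{\phi}_h := \bm{u}_h - \Pi^V\bm{u} \in V_h$ and $\psi_h := p_h - \Pi^W p \in W_h$, so that by the triangle inequality it suffices to bound $\|\bm{\phi}_h\|_V + \|\psi_h\|_W$ by the right-hand side of \eqref{eq: Error estimate general}; the $\Pi^V$ and $\Pi^W$ interpolation error terms are already on that right-hand side.

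Next I would derive the error equations. Subtracting \eqref{eq: general form-h 1} tested against $\bm{v}_h \in V_h$ from the true bilinear relation, and writing $a^h(\bm{u}_h,\bm{v}_h) = a^h(\bm{\phi}_h,\bm{v}_h) + a^h(\Pi^V\bm{u},\bm{v}_h)$, one obtains
\begin{align*}
a^h(\bm{\phi}_h,\bm{v}_h) - b(\bm{v}_h,\psi_h)
= b(\bm{v}_h, \Pi^W p) - a^h(\Pi^V\bm{u},\bm{v}_h).
\end{align*}
On the right-hand side I would insert $a(\bm{u},\bm{v}_h) - b(\bm{v}_h,p)$ (which defines $\myE_c$ after dividing by $\|\bm{v}_h\|_V$), so that the remaining terms regroup as $b(\bm{v}_h,\Pi^W p - p) + \bigl(a(\bm{u},\bm{v}_h) - a^h(\Pi^V\bm{u},\bm{v}_h)\bigr) - \myE_c\text{-term}$. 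The key algebraic manipulation is to split $a(\bm{u},\bm{v}_h) - a^h(\Pi^V\bm{u},\bm{v}_h)$ as
\begin{align*}
\bigl(a(\bm{u},\bm{v}_h) - a(\tilde\Pi^V\bm{u},\bm{v}_h)\bigr)
+ \bigl(a(\tilde\Pi^V\bm{u},\bm{v}_h) - a^h(\tilde\Pi^V\bm{u},\bm{v}_h)\bigr)
+ \bigl(a^h(\tilde\Pi^V\bm{u},\bm{v}_h) - a^h(\Pi^V\bm{u},\bm{v}_h)\bigr),
\end{align*}
the first term controlled by $\|\tilde\Pi^V\bm{u} - \bm{u}\|_V$ via continuity of $a(\cdot,\cdot)$, the second being exactly $\sigma(\tilde\Pi^V\bm{u},\bm{v}_h)$, i.e. $\myE_\sigma$, and the third controlled by $\|\Pi^V\bm{u} - \tilde\Pi^V\bm{u}\|_V \le \|\Pi^V\bm{u} - \bm{u}\|_V + \|\bm{u} - \tilde\Pi^V\bm{u}\|_V$ via \eqref{ineq: a_cont}. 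The second error equation is simpler: testing \eqref{eq: general form-h 2} against $w_h \in W_h$ and using the $b$-compatibility of $\Pi^V$ from \Cref{lem: B-compatible} together with $\nabla\cdot V_h = W_h$ gives $b(\bm{\phi}_h, w_h) = b(\bm{u}_h - \Pi^V\bm{u}, w_h) = (f,w_h)_\Omega - b(\Pi^V\bm{u},w_h) = b(\bm{u} - \Pi^V\bm{u}, w_h) = 0$, so $\bm{\phi}_h$ lies in the discrete divergence-free subspace.

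To close the estimate I would invoke the Brezzi conditions \eqref{ineqs: Brezzi conditions} established in \Cref{lem:brezzi}. Since $\bm{\phi}_h$ is discretely divergence-free, the coercivity \eqref{ineq: a_coercive} gives $\|\bm{\phi}_h\|_V^2 \lesssim a^h(\bm{\phi}_h,\bm{\phi}_h)$; testing the first error equation with $\bm{v}_h = \bm{\phi}_h$ kills the $b(\bm{\phi}_h,\psi_h)$ term (since $b(\bm{\phi}_h,w_h)=0$) and bounds $\|\bm{\phi}_h\|_V$ by the interpolation, consistency, and quadrature terms. Then the inf-sup condition \eqref{ineq: b_infsup} applied to $\psi_h$, together with the first error equation and the already-obtained bound on $\|\bm{\phi}_h\|_V$ and continuity \eqref{ineq: a_cont}, \eqref{ineq: b_cont}, yields the bound on $\|\psi_h\|_W$. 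A final triangle inequality recovers \eqref{eq: Error estimate general}. The main subtlety — and the step I would be most careful with — is the three-way splitting of $a(\bm{u},\bm{v}_h) - a^h(\Pi^V\bm{u},\bm{v}_h)$ and correctly identifying that the quadrature error must be measured at $\tilde\Pi^V\bm{u}$ (not at $\bm{u}$, which may lack the elementwise smoothness needed for the quadrature bound, nor at $\Pi^V\bm{u}$, whose $\myR_h$-component is not a polynomial of the right type), which is precisely why both $\|\tilde\Pi^V\bm{u} - \bm{u}\|_V$ and $\|\Pi^V\bm{u} - \bm{u}\|_V$ appear separately on the right-hand side of \eqref{eq: Error estimate general}.
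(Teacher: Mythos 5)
Your proposal is correct and follows essentially the same route as the paper: the same error equations built on the $b$-compatibility of $\Pi^V$, the same three-way splitting of $a(\bm{u},\bm{v}_h)-a^h(\Pi^V\bm{u},\bm{v}_h)$ through $\tilde\Pi^V\bm{u}$ that isolates $\sigma(\tilde\Pi^V\bm{u},\bm{v}_h)$ and the consistency term, and coercivity of $a^h$ on the discretely divergence-free error. The only (immaterial) difference is in the final assembly: you bound the velocity and pressure errors sequentially (test with $\bm{\phi}_h$, then invoke inf-sup for $\psi_h$), whereas the paper uses the combined test function $\Pi^V\bm{u}-\bm{u}_h-\delta\bm{v}_h^p$ with Young's inequality.
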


\begin{proof}
The proof is a modification of the proof of \cite[Theorem~2.3]{boon2020flux}, with the main difference being in forming the error equations. In particular,
from \eqref{eq: general form-h} and 
\eqref{eq: general form 2} we obtain the error equations
\begin{subequations} \label{eqs: step1 errors}
\begin{align}
	a^h(\Pi^V\bm{u} - \bm{u}_h, \bm{v}_h)
	- b(\bm{v}_h, \Pi^W p - p_h)
	&=
	a^h(\Pi^V\bm{u}, \bm{v}_h)
	- b(\bm{v}_h, p), \label{err-eq-1}
	\\
	b(\Pi^V \bm{u} - \bm{u}_h, w_h)
	&= 0,
	\label{eq: pi u - u_h div free}
\end{align}
\end{subequations}
for all $(\bm{v}_h, w_h) \in V_h \times W_h$, where in \eqref{eq: pi u - u_h div free} we used the $b$-compatibility of $\Pi^V$ \eqref{b-comp-darcy} 
and the fact that $W_h \subset W$. We manipulate the right hand side of \eqref{err-eq-1} as follows:
\begin{align}\label{error-rhs}
  \begin{split}
  a^h(\Pi^V\bm{u}, \bm{v}_h) - b(\bm{v}_h, p)
  & = a^h(\Pi^V\bm{u} - \tilde\Pi^V\bm{u}, \bm{v}_h) 
  + a^h(\tilde\Pi^V\bm{u}, \bm{v}_h) - a(\tilde\Pi^V\bm{u}, \bm{v}_h) \\
  & \qquad + a(\tilde\Pi^V\bm{u} - \bu, \bm{v}_h)
  + a(\bm{u}, \bm{v}_h) - b(\bm{v}_h, p).
\end{split}
  \end{align}
We recognize that the second and third terms on the right in \eqref{error-rhs} form the numerator of the quadrature error $\myE_\sigma$ from \eqref{quad-error}, and the last two terms form the numerator of the consistency error $\myE_c$ from \eqref{consist-error}. We next set the test functions in \eqref{eqs: step1 errors} as
\begin{align} \label{eq: test functions}
\bm{v}_h := \Pi^V \bm{u} - \bm{u}_h - \delta \bm{v}_h^p, \quad
	w_h := \Pi^W p - p_h.
\end{align}
Here $\bm{v}_h^p \in V_h$ is constructed, using the inf-sup condition on $b$ \eqref{ineq: b_infsup}, to satisfy
\begin{align} \label{eq: properties v_h^p}
	b(\bm{v}_h^p, \Pi^W p - p_h) &= \| \Pi^W p - p_h \|_W^2, &
	\| \bm{v}_h^p \|_V &\lesssim \| \Pi^W p - p_h \|_W,
\end{align}
and $\delta > 0$ is a constant to be chosen later. Now
\eqref{eqs: step1 errors}, in combination with \eqref{error-rhs}, leads to
\begin{align} \label{eq: definition terms}
  \begin{split}
&  a^h(\Pi^V \bm{u} - \bm{u}_h, \Pi^V \bm{u} - \bm{u}_h)
  + \delta \| \Pi^W p - p_h \|_W^2 
  = a^h(\Pi^V\bm{u} - \bm{u}_h, \delta \bm{v}_h^p)\\
  & \quad
  + a^h(\Pi^V\bm{u} - \tilde\Pi^V\bm{u}, \bm{v}_h)
  - \sigma(\tilde\Pi^V\bm{u}, \bm{v}_h)
+ a(\tilde\Pi^V\bm{u} - \bm{u}, \bv_h)
+ \big(	a(\bm{u}, \bm{v}_h) - b(\bm{v}_h, p)
\big).
\end{split}
\end{align}
For the left-hand side of \eqref{eq: definition terms}, \eqref{eq: pi
  u - u_h div free} and the coercivity of $a^h$ \eqref{ineq: a_coercive} imply
\begin{subequations} \label{eqs: component bounds}
\begin{align}
	\| \Pi^V \bm{u} - \bm{u}_h \|_V^2
	\lesssim a^h(\Pi^V \bm{u} - \bm{u}_h, \Pi^V \bm{u} - \bm{u}_h).
\end{align}
For the
first term on the right in \eqref{eq: definition terms}, using
the continuity of $a^h$ \eqref{ineq: a_cont}, Young's inequality
with $\epsilon_1 > 0$, and the bound on $\bm{v}_h^p$ from \eqref{eq:
  properties v_h^p}, we obtain
\begin{align}
	a^h(\Pi^V\bm{u} - \bm{u}_h, \delta \bm{v}_h^p)
	&\lesssim
	\frac{\epsilon_1}{2} \| \Pi^V\bm{u} - \bm{u}_h \|_V^2
	+ \frac{1}{2 \epsilon_1} \delta^2 \| \Pi^W p - p_h \|_W^2.
\end{align}
Similarly, for the second and fourth terms on the right-hand side of \eqref{eq: definition terms} we derive, respectively, with $\epsilon_2 > 0$ and
$\epsilon_3 > 0$,
\begin{align}
a^h(\Pi^V\bm{u} - \tilde\Pi^V\bm{u}, \bm{v}_h)  
\lesssim
\left(\frac{1}{2\epsilon_2} + \frac12 \right)
\| \Pi^V\bm{u} - \tilde\Pi^V\bm{u} \|_V^2
+ \frac{\epsilon_2}{2} \| \Pi^V\bm{u} - \bm{u}_h \|_V^2
+ \frac12 \delta^2 \| \Pi^W p - p_h \|_W^2,
\end{align}
and
\begin{align}
a(\tilde\Pi^V\bm{u} - \bm{u}, \bv_h)
\lesssim
\left(\frac{1}{2\epsilon_3} + \frac12 \right) \|\tilde\Pi^V\bm{u} - \bm{u} \|_V^2
+ \frac{\epsilon_3}{2} \| \Pi^V\bm{u} - \bm{u}_h \|_V^2
+ \frac12 \delta^2 \| \Pi^W p - p_h \|_W^2.
\end{align}
For the third term on the right in \eqref{eq: definition terms} we write, with $\epsilon_4 > 0$,
\begin{align}
  \begin{split}
\sigma(\tilde\Pi^V\bm{u}, \bm{v}_h) \le
	\| \bm{v}_h \|_V \myE_\sigma 
	\lesssim \frac{\epsilon_4}{2} \| \Pi^V \bm{u} - \bm{u}_h \|_V^2
	+ \frac12 \delta^2 \| \Pi^W p - p_h \|_W^2
	+ \left( \frac{1}{2\epsilon_4} + \frac12 \right) \myE_\sigma^2.
  \end{split}
\end{align}
Finally, for the last two terms in \eqref{eq: definition terms} we obtain,
with $\epsilon_5 > 0$,
\begin{align}
	a(\bm{u}, \bm{v}_h) - b(\bm{v}_h, p)
	\le
	\| \bm{v}_h \|_V \myE_c 
	\lesssim \frac{\epsilon_5}{2} \| \Pi^V \bm{u} - \bm{u}_h \|_V^2
	+ \frac12 \delta^2 \| \Pi^W p - p_h \|_W^2
	+ \left( \frac{1}{2\epsilon_5} + \frac12 \right) \myE_c^2.
\end{align}
\end{subequations}
Collecting \eqref{eqs: component bounds} and setting all $\epsilon_i$
sufficiently small, we arrive at
\begin{align*}
	\| \Pi^V \bm{u} - \bm{u}_h \|_V^2
	+
	\delta \| \Pi^W p - p_h \|_W^2 
	\lesssim
        \| \tilde\Pi^V \bm{u} - \bm{u} \|_V^2 +
	\| \Pi^V \bm{u} - \bm{u} \|_V^2
	+
	\delta^2 \| \Pi^W p - p_h \|_W^2
	+
	\myE_c^2 + \myE_\sigma^2 .
\end{align*}
We now set $\delta$ sufficiently small to obtain
\begin{align} \label{eq: pre-intermediate}
	\| \Pi^V \bm{u} - \bm{u}_h \|_V
	+
	\| \Pi^W p - p_h \|_W
	\lesssim
        \| \tilde\Pi^V \bm{u} - \bm{u} \|_V +
	\| \Pi^V \bm{u} - \bm{u} \|_V
	+
	\myE_c + \myE_\sigma.
\end{align}
Combining this with the triangle inequality gives us \eqref{eq: Error estimate general}.
\end{proof}

To complete the error estimate, we need to control $\myE_c$ and $\myE_\sigma$. For the consistency error $\myE_c$ we write, using integration by parts on $\Omega_i$ and the boundary condition $p = 0$ on $\partial \Omega$,
\begin{align} \label{eq: nonconformity term}
  \myE_c
  	= \sup_{\bm{v}_h \in V_h} \| \bm{v}_h \|_V^{-1}
        \Big((K^{-1} \bm{u}, \bm{v}_h)_\Omega
	- \sum_i (p, \nabla \cdot \bm{v}_h)_{\Omega_i}\Big) 
	=
	\sup_{\bm{v}_h \in V_h} \| \bm{v}_h \|_V^{-1}
	\sum_i -(p, \bm{\nu}_i \cdot \bm{v}_{h, i})_{\Gamma_i},
\end{align}
where we used that $K^{-1} \bm{u} = - \nabla p$ from \eqref{model}. This error has been bounded in \cite{boon2020flux} for both variants $\myR_h^\sharp$ and $\myR_h^\flat$. 
In the case of $\myR_h = \myR_h^\sharp$, an additional interpolation operator is utilized. Denoting the discrete subspace
consisting of continuous mortar functions by $\Lambda_{h,
  c} \subset \Lambda_h$, let $\Pi_c^\Lambda: H^1(\Gamma) \to
\Lambda_{h, c}$ be the Scott-Zhang interpolant \cite{Scott-Zhang} into $\Lambda_{h,c}$. This interpolant has the approximation property
\begin{align} \label{eq: approx Scott Zhang}
	\| p - \Pi_c^\Lambda p \|_{s_\Lambda, \Gamma}
	\lesssim h_\Gamma^{r_\Lambda - s_\Lambda} \| p \|_{r_\Lambda, \Gamma}, \quad
        1 \le r_\Lambda \le k_\Lambda + 1, \,\,
        0 \le s_\Lambda \le 1.
\end{align}
We next state the bounds on $\myE_c$ established in \cite{boon2020flux}. 

\begin{lemma}\label{lem:consist-error}
Let \ref{A: mortar condition} hold.
In the case $\myR_h = \myR_h^\flat$, it holds that
\begin{equation}\label{consist-error-flat}
\myE_c \lesssim h^{-\frac12} \sum_i \| p - \mathcal{Q}_{h, i}^\flat p \|_{\Gamma_i}.
\end{equation}
In the case $\myR_h = \myR_h^\sharp$, it holds that
\begin{equation}\label{consist-error-sharp}
  \myE_c \lesssim \| p - \Pi_c^\Lambda p \|_{\frac12, \Gamma}.
\end{equation}
\end{lemma}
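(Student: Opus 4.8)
The plan is to bound the consistency error $\myE_c$ in the form \eqref{eq: nonconformity term}, namely
\[
\myE_c = \sup_{0\ne\bm{v}_h\in V_h}\|\bm{v}_h\|_V^{-1}\sum_i -(p,\bm{\nu}_i\cdot\bm{v}_{h,i})_{\Gamma_i},
\]
by exploiting the structure of the test functions in $V_h$. The key observation is that for $\bm{v}_h\in V_h$ we have $\bm{\nu}_i\cdot\bm{v}_{h,i}=\mathcal{Q}_{h,i}\mu_h$ on $\Gamma_i$ for some $\mu_h\in\Lambda_h$ (writing $\bm{v}_h=\bm{v}_h^0+\myR_h\mu_h$ and using \eqref{Rh-bc}, since $\bm{\nu}_i\cdot\bm{v}_{h,i}^0=0$), together with the $V$-continuity bound $\|\myR_h\mu_h\|_V\lesssim\|\mu_h\|_\Lambda$ from \eqref{R-cont} and the inverse inequality $\|\mu_h\|_\Lambda\lesssim\|\bm{v}_h\|_V$ used in the proof of Theorem~\ref{thm:lambda}. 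I would handle the two cases $\myR_h^\flat$ and $\myR_h^\sharp$ separately.

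For the $\flat$ case, I would insert the $L^2(\Gamma_i)$-projection: since $\bm{\nu}_i\cdot\bm{v}_{h,i}=\mathcal{Q}_{h,i}^\flat\mu_h\in V_{h,i}^\Gamma$ and $\mathcal{Q}_{h,i}^\flat$ is the $L^2(\Gamma_i)$-orthogonal projection onto $V_{h,i}^\Gamma$ defined by \eqref{eq: from-projection problem}, one has
\[
-(p,\bm{\nu}_i\cdot\bm{v}_{h,i})_{\Gamma_i}=-(p-\mathcal{Q}_{h,i}^\flat p,\bm{\nu}_i\cdot\bm{v}_{h,i})_{\Gamma_i}\le\|p-\mathcal{Q}_{h,i}^\flat p\|_{\Gamma_i}\,\|\bm{\nu}_i\cdot\bm{v}_{h,i}\|_{\Gamma_i}.
\]
Then I apply the discrete trace inequality \eqref{discrete-trace-Darcy}, $\|\bm{\nu}_i\cdot\bm{v}_{h,i}\|_{\Gamma_i}\lesssim h^{-1/2}\|\bm{v}_{h,i}\|_{\Omega_i}\le h^{-1/2}\|\bm{v}_h\|_V$, sum over $i$, and divide by $\|\bm{v}_h\|_V$ to obtain \eqref{consist-error-flat}. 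This direction is essentially a direct computation.

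For the $\sharp$ case the gain is that $\myR_h^\sharp\mu_h$ produces weakly continuous normal traces: $\bm{\nu}_i\cdot\bm{v}_{h,i}=\mathcal{Q}_{h,i}^\sharp\mu_h$, and by \cite[Lemma~3.2]{boon2020flux} these satisfy $\sum_i(\bm{\nu}_i\cdot\bm{v}_{h,i},\eta_h)_{\Gamma_i}=0$ for all $\eta_h\in\Lambda_h$; equivalently $\sum_i(\bm{\nu}_i\cdot\bm{v}_{h,i},\eta_h)_{\Gamma_{ij}}$ summed across each interface vanishes, so $\sum_i(\psi,\bm{\nu}_i\cdot\bm{v}_{h,i})_{\Gamma_i}=0$ for any $\psi$ whose restriction to each $\Gamma_{ij}$ lies in $\Lambda_{h,ij}$ and which is single-valued across interfaces --- in particular for $\psi=\Pi_c^\Lambda p\in\Lambda_{h,c}$, which is continuous across $\Gamma$. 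Hence I can subtract $\Pi_c^\Lambda p$ for free: $\sum_i-(p,\bm{\nu}_i\cdot\bm{v}_{h,i})_{\Gamma_i}=\sum_i-(p-\Pi_c^\Lambda p,\bm{\nu}_i\cdot\bm{v}_{h,i})_{\Gamma_i}$. Now I estimate each term by the $H^{1/2}(\Gamma_{ij})$--$H^{-1/2}(\Gamma_{ij})$ (or $L^2$) duality, using $\|\bm{\nu}_i\cdot\bm{v}_{h,i}\|_{-1/2,\Gamma_{ij}}\lesssim\|\bm{v}_{h,i}\|_{H(\div,\Omega_i)}$ (the continuity of the normal trace from $H(\div)$), to bound $-(p-\Pi_c^\Lambda p,\bm{\nu}_i\cdot\bm{v}_{h,i})_{\Gamma_i}\lesssim\|p-\Pi_c^\Lambda p\|_{1/2,\Gamma}\,\|\bm{v}_h\|_V$; summing and dividing gives \eqref{consist-error-sharp}. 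The main obstacle, and the reason this relies on \cite{boon2020flux} rather than being a one-liner, is making the duality pairing across the possibly non-matching interface grids rigorous: one must either work interface-by-interface with the trace-space norms of the neighboring velocity spaces and invoke \ref{A: mortar condition}/\ref{A: mortar condition sharp} to glue the pieces, or use a global trace argument; keeping the constants mesh-independent here is the delicate point. Since the lemma is quoted verbatim from \cite{boon2020flux}, I would cite that proof for the technical trace estimates and only reproduce the structural steps above.
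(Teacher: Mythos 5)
The paper itself does not reprove this lemma --- it simply cites (3.33) and (3.34) of \cite{boon2020flux} --- so your reconstruction has to stand on its own, and the $\flat$ half of it does not. Your key identity
$-(p,\bm{\nu}_i\cdot\bm{v}_{h,i})_{\Gamma_i}=-(p-\mathcal{Q}_{h,i}^\flat p,\bm{\nu}_i\cdot\bm{v}_{h,i})_{\Gamma_i}$
is false, and in fact self-defeating: since $\bm{\nu}_i\cdot\bm{v}_{h,i}=\mathcal{Q}_{h,i}^\flat\mu_h\in V_{h,i}^\Gamma$ and $\mathcal{Q}_{h,i}^\flat$ is the $L^2(\Gamma_i)$-orthogonal projection onto $V_{h,i}^\Gamma$, the defining property \eqref{eq: from-projection problem} gives $(p-\mathcal{Q}_{h,i}^\flat p,\bm{\nu}_i\cdot\bm{v}_{h,i})_{\Gamma_i}=0$ identically. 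The orthogonality lets you replace $p$ by $\mathcal{Q}_{h,i}^\flat p$, not subtract it, so your chain would conclude $\myE_c=0$, which cannot be right. The actual mechanism is cross-interface cancellation, not per-subdomain orthogonality: group the sum by interfaces, use self-adjointness of the projection to write $(p,\mathcal{Q}_{h,i}^\flat\mu_h)_{\Gamma_{ij}}+(p,\mathcal{Q}_{h,j}^\flat\mu_h)_{\Gamma_{ij}}=(\mathcal{Q}_{h,i}^\flat p-\mathcal{Q}_{h,j}^\flat p,\mu_h)_{\Gamma_{ij}}$ (using $\mu_{h,i}=-\mu_{h,j}$ and the single-valuedness of $p$), insert $\pm p$, and then --- this is the step your proof is missing entirely --- invoke Assumption \ref{A: mortar condition} to control $\|\mu_h\|_{\Gamma_{ij}}$ by $\|\mathcal{Q}_{h,i}^\flat\mu_h\|_{\Gamma_{ij}}+\|\mathcal{Q}_{h,j}^\flat\mu_h\|_{\Gamma_{ij}}=\|\bm{\nu}_i\cdot\bm{v}_{h,i}\|_{\Gamma_{ij}}+\|\bm{\nu}_j\cdot\bm{v}_{h,j}\|_{\Gamma_{ij}}$ before applying the discrete trace inequality \eqref{discrete-trace-Darcy}. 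The fact that your $\flat$ argument never uses \ref{A: mortar condition}, while the lemma explicitly assumes it, is the telltale sign that the hypothesis-carrying step has been lost.

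The $\sharp$ half is essentially correct: the weak continuity \eqref{eq: weak continuity of psi} of the traces $\mathcal{Q}_{h,i}^\sharp\mu_h$ does let you subtract the single-valued $\Pi_c^\Lambda p\in\Lambda_{h,c}\subset\Lambda_h$ for free, and the $H^{1/2}$--$H^{-1/2}$ duality with the normal-trace bound from $H(\div,\Omega_i)$ then yields \eqref{consist-error-sharp}; the technicalities you flag (localizing the duality pairing to $\Gamma_i\subsetneq\partial\Omega_i$) are genuine but standard and appropriately deferred to \cite{boon2020flux}. One small additional slip in your preamble: Theorem~\ref{thm:lambda} gives $\|\mu_h\|_\Lambda\lesssim h^{-1/2}\|\bm{v}_h\|_{\Omega}$, not $\|\mu_h\|_\Lambda\lesssim\|\bm{v}_h\|_V$, though you do not actually rely on that estimate later.
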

\begin{proof}
	The bound in the case $\myR_h = \myR_h^\flat$ is given in \cite[(3.34)]{boon2020flux} and the bound in the case $\myR_h = \myR_h^\sharp$ is given in \cite[(3.33)]{boon2020flux}.
\end{proof}

The quadrature error $\sigma(\cdot,\cdot)$ has been bounded in \cite{WheYot}. In particular, assuming that $K^{-1}|_\omega \in W^{1,\infty}(\omega)$ for all elements $\omega$, it is shown in \cite[Lemma~3.5]{WheYot} for each subdomain $\Omega_i$ that
\begin{equation}\label{sigma-bound}
  \sigma_i(\bq,\bv) \lesssim \sum_{\omega \in \Omega_{h,i}} h \|K^{-1}\|_{1,\infty,\omega}  \|\bq\|_{1,\omega}\|\bv\|_\omega   \quad \forall \, \bq,\bv \in V_{h,i},
\end{equation}
where $\sigma_i := \sigma|_{\Omega_i}$. We then obtain the following bound.

\begin{lemma}\label{lem:Esigma-bound}
Assuming that $K^{-1}|_\omega \in W^{1,\infty}(\omega)$ for all elements $\omega$, it holds that
\begin{equation}\label{Esigma-bound}
  \myE_\sigma \lesssim h \sum_i \|\bu\|_{1,\Omega_i}.
\end{equation}
\end{lemma}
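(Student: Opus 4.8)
The plan is to bound $\myE_\sigma$ directly from its definition \eqref{quad-error} by invoking the per-subdomain quadrature estimate \eqref{sigma-bound} with $\bq = \tilde\Pi^V\bm{u}|_{\Omega_i}$ and $\bv = \bm{v}_h|_{\Omega_i}$, then controlling the two factors that appear. Summing \eqref{sigma-bound} over $i$ gives
\begin{align*}
  \sigma(\tilde\Pi^V\bm{u}, \bm{v}_h)
  = \sum_i \sigma_i(\Pi_i^V\bm{u}, \bm{v}_{h,i})
  \lesssim h \sum_i \sum_{\omega \in \Omega_{h,i}} \|K^{-1}\|_{1,\infty,\omega}\, \|\Pi_i^V\bm{u}\|_{1,\omega}\, \|\bm{v}_{h,i}\|_\omega,
\end{align*}
where I use $\|K^{-1}\|_{1,\infty,\omega}\lesssim 1$ uniformly by the assumption $K^{-1}|_\omega\in W^{1,\infty}(\omega)$ together with \eqref{K-spd}. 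Applying Cauchy--Schwarz over the elements in each subdomain, this is bounded by
\begin{align*}
  h \sum_i \Big(\sum_{\omega}\|\Pi_i^V\bm{u}\|_{1,\omega}^2\Big)^{1/2}\Big(\sum_{\omega}\|\bm{v}_{h,i}\|_\omega^2\Big)^{1/2}
  = h \sum_i \|\Pi_i^V\bm{u}\|_{1,\Omega_{h,i}}\, \|\bm{v}_{h,i}\|_{\Omega_i},
\end{align*}
using the broken $H^1$-norm on the right.

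The next step is to replace $\|\Pi_i^V\bm{u}\|_{1,\Omega_{h,i}}$ by $\|\bm{u}\|_{1,\Omega_i}$ via the elementwise continuity property \eqref{Pi-cont} of the canonical interpolant, $\|\Pi_i^V\bm{u}\|_{1,\omega}\lesssim\|\bm{u}\|_{1,\omega}$, and then to observe $\|\bm{v}_{h,i}\|_{\Omega_i}\le\|\bm{v}_{h,i}\|_{\div,\Omega_i}\le\|\bm{v}_h\|_V$. Summing over $i$ and using Cauchy--Schwarz once more (or simply $\sum_i a_i b \le (\sum_i a_i) b$ with $b=\|\bm{v}_h\|_V$) yields
\begin{align*}
  \sigma(\tilde\Pi^V\bm{u}, \bm{v}_h) \lesssim h \Big(\sum_i \|\bm{u}\|_{1,\Omega_i}\Big)\|\bm{v}_h\|_V.
\end{align*}
Dividing by $\|\bm{v}_h\|_V$ and taking the supremum over $0\ne\bm{v}_h\in V_h$ gives \eqref{Esigma-bound}.

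There is one regularity subtlety to address: \eqref{sigma-bound} is stated for $\bq\in V_{h,i}$, so I must confirm that $\Pi_i^V\bm{u}\in V_{h,i}$, which holds as long as $\bm{u}|_{\Omega_i}\in (H^\epsilon(\Omega_i))^n$ so that $\Pi_i^V$ is defined; the hypothesis $\bm{u}|_{\Omega_i}\in H^1(\Omega_i)$ used on the right-hand side is exactly what \eqref{Pi-cont} needs and is implied by the "sufficient regularity" assumption already invoked in \Cref{lem: approximation prop Pi}. The only mild obstacle is bookkeeping: one must be careful that the quadrature rule acts on $\tilde\Pi^V\bm{u}$ (which lies in $V_{h,i}$ piecewise, hence the reference-element scaling in \eqref{sigma-bound} applies) and not on $\bm{u}$ itself, and that the $h\,\|K^{-1}\|_{1,\infty,\omega}$ factor is absorbed into the generic constant uniformly over all elements. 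No genuinely hard estimate is required; the result is essentially an assembly of \eqref{sigma-bound} and \eqref{Pi-cont}.
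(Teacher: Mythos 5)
Your proposal is correct and follows the same route as the paper, which proves the lemma by combining the definition \eqref{quad-error}, the elementwise quadrature bound \eqref{sigma-bound} applied to $\bq=\tilde\Pi^V\bu$, and the continuity \eqref{Pi-cont} of the canonical interpolant; you have simply written out the Cauchy--Schwarz and bookkeeping steps the paper leaves implicit.
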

\begin{proof}
The result follows from the definition \eqref{quad-error} of $\myE_\sigma$, bound \eqref{sigma-bound}, and the continuity of $\tilde\Pi$ \eqref{Pi-cont}.
\end{proof}

Combining Theorem~\ref{thm:error}, Lemma~\ref{lem:consist-error}, Lemma~\ref{lem:Esigma-bound}, and the approximation properties
\eqref{eqs: approx props}, \eqref{eq: approx prop v}, and
\eqref{eq: approx Scott Zhang}, we arrive at the following error estimate.

\begin{theorem}\label{thm:error-final}
  Assume that \ref{A: mortar condition} holds and 
  that the solution to \eqref{weak-model} is sufficiently smooth. Then, in the case $\myR_h = \myR_h^\flat$, it holds that
\begin{align*}
\| \bm{u} - \bm{u}_h \|_V  + \| p - p_h \|_W
\lesssim \ h \sum_i \left( \| \bm{u} \|_{1, \Omega_i}
+ \| \nabla \cdot \bm{u} \|_{1, \Omega_i} + \| p \|_{1, \Omega_i}
+ \| p \|_{\frac32, \Gamma_i} \right)
+ h_\Gamma^{k_\Lambda+1}\sum_{i < j} \| \lambda \|_{k_\Lambda + 1, \Gamma_{ij}}. 
\end{align*}
In the case $\myR_h = \myR_h^\sharp$, it holds that
\begin{align*}
\| \bm{u} - \bm{u}_h \|_V + \| p - p_h \|_W
\lesssim& \ h \left( \sum_i \left(\| \bm{u} \|_{1, \Omega_i}
+ \| \nabla \cdot \bm{u} \|_{1, \Omega_i}
+ \| p \|_{1, \Omega_i} \right)   
  + \sum_{i < j} \| \lambda \|_{1, \Gamma_{ij}} \right) \\
& \  + h_\Gamma^{k_\Lambda + 1} \sum_{i < j} \| \lambda \|_{k_\Lambda + 1, \Gamma_{ij}} 
+ h_\Gamma^{k_\Lambda + \frac12} \| p \|_{k_\Lambda + 1, \Gamma}.
\end{align*}
\end{theorem}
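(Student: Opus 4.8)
The plan is to combine the abstract error estimate of Theorem~\ref{thm:error} with the concrete bounds that have already been collected, so the proof is essentially a bookkeeping exercise of substituting the approximation properties into \eqref{eq: Error estimate general}. First I would recall that Theorem~\ref{thm:error} reduces the total error $\| \bm{u} - \bm{u}_h \|_V + \| p - p_h \|_W$ to five contributions: $\| \tilde\Pi^V \bm{u} - \bm{u} \|_V$, $\| \Pi^V \bm{u} - \bm{u} \|_V$, $\| \Pi^W p - p \|_W$, the consistency error $\myE_c$, and the quadrature error $\myE_\sigma$. Each of these must be bounded in the two cases $\myR_h = \myR_h^\flat$ and $\myR_h = \myR_h^\sharp$.

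Next I would handle the five terms in turn. For $\| \Pi^W p - p \|_W$ apply \eqref{eq: approx prop w} with $r_w = 1$, yielding $h \sum_i \| p \|_{1,\Omega_i}$. For $\| \Pi^V \bm{u} - \bm{u} \|_V$ use Lemma~\ref{lem: approximation prop Pi}: in the flat case \eqref{approx-Pi-flat} with $r_v = 1$, $r_w = 1$, $r_\Lambda = k_\Lambda+1$ gives $h \sum_i(\| \bm{u} \|_{1,\Omega_i} + \| \nabla\cdot\bm{u} \|_{1,\Omega_i}) + h_\Gamma^{k_\Lambda+1}\sum_{i<j}\| \lambda \|_{k_\Lambda+1,\Gamma_{ij}}$, and in the sharp case \eqref{approx-Pi-sharp} with additionally $\tilde r_v = 1$ contributes the extra term $h \sum_{i<j}\| \lambda \|_{1,\Gamma_{ij}}$. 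The term $\| \tilde\Pi^V \bm{u} - \bm{u} \|_V$ is controlled using \eqref{eq: approx prop v_i} and \eqref{eq: approx prop div v} with $r_v = r_w = 1$, giving $h\sum_i(\| \bm{u} \|_{1,\Omega_i} + \| \nabla\cdot\bm{u} \|_{1,\Omega_i})$, which is already absorbed into the previous bound. The quadrature error is handled by Lemma~\ref{lem:Esigma-bound} directly: $\myE_\sigma \lesssim h \sum_i \| \bm{u} \|_{1,\Omega_i}$ (this is where the hypothesis $K^{-1}|_\omega \in W^{1,\infty}(\omega)$, implicit in ``sufficiently smooth'', enters). Finally, for $\myE_c$ I would invoke Lemma~\ref{lem:consist-error}: in the flat case $\myE_c \lesssim h^{-1/2}\sum_i \| p - \mathcal{Q}_{h,i}^\flat p \|_{\Gamma_i}$, which by \eqref{eq: approx prop Q} with $r_v = 2$ applied on $\Gamma_i$ gives $h^{-1/2} \cdot h^{2}\sum_i\| p \|_{2,\Gamma_i} = h^{3/2}\sum_i\| p \|_{2,\Gamma_i}$; I would note this is of higher order than $h$, so it is dominated by $h \sum_i \| p \|_{3/2,\Gamma_i}$ — actually, to match the statement exactly one uses \eqref{eq: approx prop Q} with $r_v = 3/2$ to get $h^{-1/2}\cdot h^{3/2}\| p \|_{3/2,\Gamma_i} = h\sum_i\| p \|_{3/2,\Gamma_i}$, matching the $\| p \|_{3/2,\Gamma_i}$ term in the flat estimate. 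In the sharp case $\myE_c \lesssim \| p - \Pi_c^\Lambda p \|_{1/2,\Gamma}$, which by \eqref{eq: approx Scott Zhang} with $r_\Lambda = k_\Lambda+1$, $s_\Lambda = 1/2$ gives $h_\Gamma^{k_\Lambda + 1/2}\| p \|_{k_\Lambda+1,\Gamma}$, the final term in the sharp estimate.

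Assembling all these bounds and using the triangle inequality from Theorem~\ref{thm:error} yields the two displayed estimates, after collecting like powers of $h$ and $h_\Gamma$ and discarding redundant (higher-order) terms. The proof would close by simply noting that all constants are independent of $h$ and $h_\Gamma$, and that ``sufficiently smooth'' is precisely the regularity needed for the interpolation and quadrature bounds cited (namely $\bm{u} \in H^1(\Omega_i)$, $\nabla\cdot\bm{u} \in H^1(\Omega_i)$, $p \in H^1(\Omega_i)$, $\lambda \in H^{k_\Lambda+1}(\Gamma_{ij})$, plus $p \in H^{3/2}(\Gamma_i)$ in the flat case or $p \in H^{k_\Lambda+1}(\Gamma)$ in the sharp case, and $K^{-1}$ elementwise Lipschitz).

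The main obstacle — really the only non-mechanical point — is tracking the consistency error $\myE_c$ carefully: one must choose the Sobolev exponent in the approximation property \eqref{eq: approx prop Q} (flat case) so that the negative power $h^{-1/2}$ from the discrete trace inequality is exactly compensated, landing on the $\| p \|_{3/2,\Gamma_i}$ norm that appears in the theorem, and similarly verify in the sharp case that the Scott--Zhang estimate with $s_\Lambda = 1/2$ is what produces the $h_\Gamma^{k_\Lambda+1/2}$ factor. Everything else is a direct substitution of the approximation properties \eqref{eqs: approx props}, \eqref{eq: approx prop v}, and \eqref{eq: approx Scott Zhang} into \eqref{eq: Error estimate general}, so I would present the proof as a single short paragraph listing the five substitutions and the resulting order counting, deferring to the cited lemmas for all the work.
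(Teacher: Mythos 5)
Your proposal is correct and takes essentially the same approach as the paper, which presents this theorem precisely as the combination of Theorem~\ref{thm:error}, Lemma~\ref{lem:consist-error}, Lemma~\ref{lem:Esigma-bound}, and the approximation properties \eqref{eqs: approx props}, \eqref{eq: approx prop v}, and \eqref{eq: approx Scott Zhang}. Your exponent choices are the right ones, in particular $r_v = \tfrac32$ in \eqref{eq: approx prop Q} to cancel the $h^{-1/2}$ from the trace inequality in the flat case, and $s_\Lambda = \tfrac12$, $r_\Lambda = k_\Lambda+1$ in the Scott--Zhang bound to produce the $h_\Gamma^{k_\Lambda+\frac12}$ term in the sharp case.
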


\section{Extension to quadrilateral and hexahedral grids}
\label{sec: Analysis quads}

In this section we discuss the flux-mortar MFMFE method for quadrilateral and hexahedral grids. We follow the same steps as in the simplicial case and thus first define the discrete function spaces in \Cref{sub:numerical method quads}, in analogy with \Cref{sec: Numerical method}. Second, we show that the discrete system is well-posed in \Cref{sub: well-posedness quads}, similar to \Cref{sub: well-posedness simplices}. Finally, we derive error estimates in \Cref{sub: Error analysis quads}.

\subsection{Numerical method}
\label{sub:numerical method quads}

For each $i$, let $V_{h, i} \times
W_{h, i} \subset V_i \times W_i$ be the BDM$_1$ spaces on  quadrilaterals
\cite{BDM} or the enhanced BDDF$_1$ spaces on hexahedra \cite{IngWheYot}. On the reference square or cube, the spaces are defined as
\begin{equation*}
\hat V(\hE) := P_1(\hE)^n + S_n, \quad \hat W(\hE) := P_0(\hE),
\end{equation*}
where the space $S_2$ is the span of two divergence-free curl vectors and the space $S_3$ is the span of twelve divergence-free curl vectors. Thus, $\text{dim} (\hat V(\hE)) = 8$ in $\mathbb{R}^2$ and $\text{dim} (\hat V(\hE)) = 24$ in $\mathbb{R}^3$. The vector functions in $S_n$ are specially chosen, so that $\bnu_{\he} \cdot \hv \in P_1(\he)$ on any facet $\he$ of $\hE$ in $\mathbb{R}^2$ and $\bnu_{\he} \cdot \hv \in Q_1(\he)$ on any facet $\he$ of $\hE$ in $\mathbb{R}^3$, where $Q_1$ is the space of bilinear functions. As a result,
there are two degrees of freedom per facet in two dimensions and four in three dimensions, which can be chosen to be the values of $\bnu_{\he} \cdot \hv$ at the vertices of $\he$. We note the original BDDF$_1$ space \cite{BDDF} is of dimension 18 and contains only three degrees of freedom per facet. It was enhanced in \cite{IngWheYot} for the purpose of the MFMFE method.  
The spaces $V_{h,i}$ and $W_{h,i}$ are defined as in \eqref{eq:BDM}.

Here we focus on $h^2$-parallelograms or $h^2$-parallelepipeds and a symmetric quadrature rule, which have been studied in \cite{WheYot,IngWheYot}. In two dimensions, an element $\omega$ is an $h^2$-parallelogram if
\begin{equation*}
|\vec a - \vec b|_{\mathbb{R}^2} \lesssim h^2,
\end{equation*}
where $\vec a$ and $\vec b$ are any two opposite facets of $\omega$ and $|\cdot|_{\mathbb{R}^2}$ is the Euclidean vector norm. In three dimensions, an element $\omega$ is an $h^2$-parallelepiped if all of its facets are $h^2$-parallelograms.

\begin{remark}\label{rem:polytopes}
  The developments in this section can be extended to general quadrilaterals and hexahedra using a non-symmetric quadrature rule and employing the techniques developed in \cite{klausen2006robust,WheXueYot}, as well as to general polytopes using the MFD formulation of the MPFA method and the theory developed in \cite{LSY}. 
\end{remark}

The main difference from the case of simplicial elements is that the space $S_n$ contains quadratic functions. As a result, the quadrature error bound \eqref{sigma-bound} no longer holds. In order to obtain a similar bound, one needs to restrict the second argument $\bv$ in $\sigma_i(\bq,\bv)$ to be at most piecewise linear. To this end, following \cite{WheYot,IngWheYot,wheeler2012multiscale}, we consider the lowest order Raviart-Thomas ($\RT_0$) spaces, which are defined on the reference square or cube as
\begin{equation*}
\hat V^\RT(\hE) := P_0(\hE)^n + (\alpha_1 x_1, \ldots, \alpha_n x_n)^T, \quad \hat W^\RT(\hE) = W(\hE) = P_0(\hE),
\end{equation*}
where $\alpha_i$ are real numbers. The spaces $V_{h,i}^\RT$ and $W_{h,i}^\RT$ are defined as in \eqref{eq:BDM}.

Let $V_h^{\Gamma,\RT}$,
$V_{h, c}^\RT$, and $V_{h, c}^{\Gamma,\RT}$ be the $\RT_0$ counterparts of the spaces $V_h^{\Gamma}$, $V_{h, c}$, and $V_{h, c}^\Gamma$ defined in \eqref{eq: def VhGamma}, \eqref{Vhc}, and \eqref{Vhc-Gamma}, respectively. We modify the projection operator $\myQ_{h,i}$ as follows. In the first option we set 
$\mathcal{Q}_{h, i} = \mathcal{Q}_{h, i}^{\flat,\RT}$, where
$\mathcal{Q}_{h,i}^{\flat,\RT}: \Lambda \to V_{h, i}^{\Gamma,\RT}$ is the $L^2(\Gamma_i)$-orthogonal projection satisfying
\begin{align*} 
(\lambda_i - \myQ_{h, i}^{\flat,\RT}\lambda, \xi_{h, i})_{\Gamma_i} = 0, \quad
\forall \, \xi_{h, i} &\in V_{h, i}^{\Gamma,\RT}.
\end{align*}
The mortar condition \ref{A: mortar condition} is replaced by

\begin{enumerate}[label=A\arabic*$^\RT$., ref=A\arabic*$^\RT$, start=1]
\item \label{A: mortar condition RT}
The following mortar condition holds:
\begin{align}\label{mortar-condition-RT}
\forall \mu_h \in \Lambda_h, \quad \| \mu_h \|_{\Gamma_{ij}}
			&\lesssim
			\| \mathcal{Q}_{h, i}^{\flat,\RT} \mu_h \|_{\Gamma_{ij}}
			+ \| \mathcal{Q}_{h, j}^{\flat,\RT} \mu_h \|_{\Gamma_{ij}}, \quad \forall \, \Gamma_{ij}.
		\end{align}
        \end{enumerate}

In the second option we set $\mathcal{Q}_{h, i} = \mathcal{Q}_{h, i}^{\sharp,\RT}$, where $\mathcal{Q}_{h}^{\sharp,\RT}: \Lambda \to V_{h, c}^{\Gamma,\RT}$ is the $L^2$-projection of $\lambda$ onto $V_{h, c}^{\Gamma,\RT}$, satisfying
\begin{align*}
\sum_i (\lambda_i - \myQ_{h, i}^{\sharp,\RT}\lambda, \xi_{h, i})_{\Gamma_i} = 0, \quad
\forall \xi_h \in V_{h, c}^{\Gamma,\RT}.
\end{align*}
The mortar condition \ref{A: mortar condition sharp} is replaced by

\begin{enumerate}[label=A\arabic*$^\RT$., ref=A\arabic*$^\RT$, start=2]
\item \label{A: mortar condition sharp RT}
The following mortar condition holds:
\begin{align}\label{mortar-condition-sharp-RT}
\forall \mu_h \in \Lambda_h, \quad \| \mu_h \|_{\Gamma_{ij}}
&\lesssim \| \mathcal{Q}_{h, i}^{\sharp,\RT} \mu_h \|_{\Gamma_{ij}}
+ \| \mathcal{Q}_{h, j}^{\sharp,\RT} \mu_h \|_{\Gamma_{ij}}, \quad \forall \, \Gamma_{ij}.
\end{align}
\end{enumerate}

The above modifications affect the discrete extension operator $\myR_{h,i}$ defined in \eqref{eq: R_h problem Darcy}, cf. \eqref{Rh-bc}, and consequently the definition of the discrete velocity space $V_h$, cf. \eqref{eq: definition V_h}. With these modifications, the 
flux-mortar MFMFE method on quadrilateral and hexahedral grids is defined as in \eqref{eq: general form-h}.

\subsection{Well-posedness}
\label{sub: well-posedness quads}

Similar to \Cref{cor: well-posed simplices}, the well-posedness of the discrete problem follows directly.

\begin{corollary}
The discrete problem \eqref{eq: general form-h} in the case of $h^2$-parallelograms or $h^2$-parallelepipeds has a unique
solution that satisfies \eqref{stab-est}.
\end{corollary}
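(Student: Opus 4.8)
The plan is to reduce the quadrilateral/hexahedral case to the abstract well-posedness framework already established in \Cref{thm:well-posed}, exactly as was done for simplices in \Cref{cor: well-posed simplices}. Thus the proof amounts to verifying, in the new setting, the hypotheses of \Cref{thm:well-posed}: namely that the extension operator problem \eqref{eq: R_h problem Darcy} (now with the $\RT_0$-based boundary condition $\bnu_i \cdot \myR_{h,i}\lambda = \myQ_{h,i}^{\flat,\RT}\lambda$ or $\myQ_{h,i}^{\sharp,\RT}\lambda$) has a unique solution and yields a continuous operator $\myR_h$ satisfying \eqref{R-cont}, and that the four Brezzi-type inequalities \eqref{ineqs: Brezzi conditions} hold with the BDM$_1$/enhanced-BDDF$_1$ spaces $V_{h,i}$ and the quadrature form $a^h$.

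First I would observe that the local mixed pair $V_{h,i} \times W_{h,i}$ on $h^2$-parallelograms and $h^2$-parallelepipeds still satisfies the analogue of \eqref{mixed-spaces}, i.e. $\nabla \cdot V_{h,i} = W_{h,i}$ and the local inf-sup condition \eqref{local-inf-sup}; this is classical for BDM$_1$ and is established for the enhanced BDDF$_1$ spaces in \cite{IngWheYot}. Next, the continuity and coercivity of $a^h(\cdot,\cdot)$, i.e. \eqref{ineq: a_cont} and \eqref{ineq: a_coercive}, are precisely the properties of the symmetric vertex quadrature rule on $h^2$-parallelograms/parallelepipeds proved in \cite{WheYot,IngWheYot}; the $h^2$-perturbation assumption is exactly what makes the quadrature form spectrally equivalent to $(K^{-1}\cdot,\cdot)$ on these spaces. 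Continuity of $b(\cdot,\cdot)$ \eqref{ineq: b_cont} is immediate from its definition. The remaining structural ingredients — the unique solvability and continuity of $\myR_h$ (Lemma~\ref{lem:R}), the global inf-sup condition \eqref{ineq: b_infsup} (from \cite[Lemma~3.4]{boon2020flux}), and the characterization \eqref{SH-characterization} of $S_H$ — carry over verbatim, since their proofs in \cite{boon2020flux} use only the abstract properties of the local spaces and the projection $\myQ_{h,i}$, all of which remain valid for the $\RT_0$-based projections $\myQ_{h,i}^{\flat,\RT}$, $\myQ_{h,i}^{\sharp,\RT}$ under \ref{A: mortar condition RT} or \ref{A: mortar condition sharp RT}. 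Assembling these, Lemma~\ref{lem:R} and Lemma~\ref{lem:brezzi} hold in the new setting, and \Cref{thm:well-posed} yields the unique solution satisfying \eqref{stab-est}.

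The only genuinely new point — and hence the step I expect to require the most care — is that the boundary data in \eqref{Rh-bc} now lies in the $\RT_0$ trace space $V_{h,i}^{\Gamma,\RT}$ rather than in $V_{h,i}^\Gamma$, so one must check that $\myQ_{h,i}^{\flat,\RT}\lambda$ (resp.\ $\myQ_{h,i}^{\sharp,\RT}\lambda$) is indeed attainable as a normal trace of some $\bv \in V_{h,i}$ and that the solvability argument of \cite[Lemma~3.3]{boon2020flux} still closes. Since the enhanced BDDF$_1$ and BDM$_1$ normal traces on a facet contain the $\RT_0$ normal traces (constant-plus-appropriate-linear on each facet), this inclusion $V_{h,i}^{\Gamma,\RT} \subseteq V_{h,i}^{\Gamma}$ holds, and the rest of the extension-operator argument is unchanged; the continuity bound \eqref{R-cont} follows from the same trace/stability estimates as before, now invoked on $h^2$-quadrilateral elements where the Piola map and its Jacobian are controlled as in \cite{WheYot,IngWheYot}. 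With this observation in place, I would simply state that all hypotheses of \Cref{thm:well-posed} are met and conclude. In the write-up I would keep the proof to a couple of sentences, pointing to \cite{WheYot,IngWheYot} for \eqref{ineq: a_cont}--\eqref{ineq: a_coercive} and to the simplicial arguments of \Cref{sec: Analysis simplices} (which transfer mutatis mutandis) for the rest.
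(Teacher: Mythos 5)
Your proposal is correct and follows essentially the same route as the paper: both reduce to Theorem~\ref{thm:well-posed} by noting that Lemma~\ref{lem:R} still holds, citing \cite{WheYot,IngWheYot} for \eqref{ineq: a_cont} and \eqref{ineq: a_coercive} on $h^2$-parallelograms/parallelepipeds, and observing that \eqref{ineq: b_cont} and \eqref{ineq: b_infsup} carry over from the simplicial case. The extra discussion of the $\RT_0$ trace space and the attainability of the boundary data is a reasonable elaboration but not a departure from the paper's argument.
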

\begin{proof}
The statement follows from Theorem~\ref{thm:well-posed}. In particular, Lemma~\ref{lem:R} still holds in this case. Inequalities \eqref{ineq: a_cont} and \eqref{ineq: a_coercive} for these types of elements have been verified in
\cite{WheYot,IngWheYot}. The continuity of $b(\cdot,\cdot)$ \eqref{ineq: b_cont} and the inf-sup condition \eqref{ineq: b_infsup} follow as in the case of simplices. 
\end{proof}

The proof of the following theorem is the same as the proof of Theorem~\ref{thm:lambda}.

\begin{theorem}\label{thm:lambda-RT}
  Let \ref{A: mortar condition RT} hold in the case of $\myQ_{h, i}^{\flat,\RT}$ and let 
  \ref{A: mortar condition RT} and \ref{A: mortar condition sharp RT} hold in the case of $\myQ_{h, i}^{\sharp,\RT}$. Then the mortar solution $\lambda_h$ of \eqref{eq: general form-h} is unique.
\end{theorem}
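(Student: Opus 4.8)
The plan is to mirror the argument given for Theorem~\ref{thm:lambda} verbatim, since the structure of the method on quadrilateral and hexahedral grids is identical once the $\RT_0$-based projections $\myQ_{h,i}^{\flat,\RT}$ and $\myQ_{h,i}^{\sharp,\RT}$ and the corresponding extension operator $\myR_{h,i}$ replace their simplicial counterparts. Concretely, I would first note that $\bm{u}_h = \bm{u}_h^0 + \myR_h \lambda_h$ with $\bm{u}_h^0 \in V_h^0$, so that on each $\Gamma_i$ the normal trace satisfies $\bnu_i \cdot \bm{u}_{h,i} = \bnu_i \cdot \myR_{h,i}\lambda_h = \myQ_{h,i}\lambda_h$ by the essential boundary condition \eqref{Rh-bc} (now with the $\RT_0$ projection). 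This is the only place the $\RT_0$ modification enters, and it carries through unchanged.

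Second, I would invoke the discrete trace inequality \eqref{discrete-trace-Darcy}, which holds for the BDM$_1$ (or enhanced BDDF$_1$) spaces on shape-regular quadrilateral/hexahedral meshes by the same scaling argument, giving $\|\bnu_i\cdot\bv_{h,i}\|_{\Gamma_i} \lesssim h^{-1/2}\|\bv_{h,i}\|_{\Omega_i}$ for all $\bv_{h,i}\in V_{h,i}$. Combining the mortar condition --- \ref{A: mortar condition RT} in the $\flat$ case, or \ref{A: mortar condition RT} together with \ref{A: mortar condition sharp RT} in the $\sharp$ case --- with the identity $\bnu_i\cdot\bm{u}_{h,i} = \myQ_{h,i}\lambda_h$ and the discrete trace inequality yields
\begin{align*}
\| \lambda_h \|_{\Gamma} \lesssim \sum_i \| \myQ_{h,i}\lambda_h \|_{\Gamma_i} = \sum_i \|\bnu_i\cdot\bm{u}_{h,i}\|_{\Gamma_i} \lesssim h^{-1/2} \| \bm{u}_h \|_\Omega.
\end{align*}
Finally, the stability bound \eqref{stab-est} (valid here by the preceding Corollary) controls $\|\bm{u}_h\|_\Omega \le \|\bm{u}_h\|_V \lesssim \|f\|_\Omega$, so $\lambda_h$ is bounded by the data and hence, by linearity of the problem, uniquely determined.

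There is essentially no obstacle here: the proof is word-for-word that of Theorem~\ref{thm:lambda}, and the only thing one must check is that the mortar conditions \ref{A: mortar condition RT}, \ref{A: mortar condition sharp RT} are stated for exactly the projection operators $\myQ_{h,i}^{\flat,\RT}$, $\myQ_{h,i}^{\sharp,\RT}$ that appear in the boundary condition \eqref{Rh-bc} for the quadrilateral/hexahedral version of $\myR_{h,i}$ --- which they are, by construction in \Cref{sub:numerical method quads}. Thus the appropriate statement is simply: \emph{The proof is identical to that of Theorem~\ref{thm:lambda}, using the discrete trace inequality \eqref{discrete-trace-Darcy} on quadrilateral/hexahedral grids, the identity $\bnu_i\cdot\bm{u}_{h,i} = \myQ_{h,i}\lambda_h$, the mortar condition \ref{A: mortar condition RT} (and \ref{A: mortar condition sharp RT} in the $\sharp$ case), and the stability estimate \eqref{stab-est}.}
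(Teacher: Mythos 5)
Your proposal is correct and matches the paper exactly: the paper's entire proof of this theorem is the single remark that it is the same as the proof of Theorem~\ref{thm:lambda}, and your reconstruction of that argument (trace identity from \eqref{Rh-bc} with the $\RT_0$ projection, discrete trace inequality \eqref{discrete-trace-Darcy}, mortar conditions \ref{A: mortar condition RT}/\ref{A: mortar condition sharp RT}, and stability \eqref{stab-est}) is precisely what that entails.
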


\subsection{Error analysis}
\label{sub: Error analysis quads}

In this section, we follow the same steps as in \Cref{sub: error analysis simplices} to present the \emph{a priori} error analysis. Thus, we first define suitable interpolation operators in \Cref{ssub: Interpolation operators quads} and present the error estimates in \Cref{ssub: Error estimate quads}.

\subsubsection{Interpolation operators}
\label{ssub: Interpolation operators quads}

The definition of the composite interpolant
$\Pi^V: \overline V \to V_h$ is modified from \eqref{Pi-defn} to
\begin{subequations}\label{Pi-defn-RT}
\begin{align}
  & \Pi_\flat^V \bm{u} := \myR_h^{\flat,\RT} \Pi^\Lambda\lambda
  + \tilde\Pi^V (\bm{u} - \myR_{h}^{\flat} \lambda)
  = \myR_h^{\flat,\RT} (\Pi^\Lambda\lambda - \lambda)
+ \myR_h^{\flat,\RT}\lambda - \myR_h^{\flat}\lambda
  + \tilde\Pi^V \bm{u}, \label{Pi-flat-RT}\\
  & \Pi_\sharp^V \bm{u} := \myR_h^{\sharp,\RT} \Pi^\Lambda\lambda
+ \tilde\Pi^V (\bm{u} - \myR_{h}^{\flat} \lambda)
= \Pi_\flat^V \bm{u} - \myR_h^{\flat,\RT} \Pi^\Lambda\lambda + \myR_h^{\sharp,\RT} \Pi^\Lambda\lambda. \label{Pi-sharp-RT}
\end{align}
\end{subequations}
As in \eqref{Pi-defn}, $\Pi_i^V(\bm{u}_i - \myR_{h,i}^\flat \lambda) \in V_i^0$, so \eqref{Pi-defn-RT} gives $\Pi_\flat^V\bm{u} \in V_h^\flat$ and
$\Pi_\sharp^V\bm{u} \in V_h^\sharp$. We next note that Lemma~\ref{lem: B-compatible} still holds. We also have the following approximation properties.

\begin{lemma}
  \label{lem: approximation prop Pi RT}
Assuming that $\bu$ has sufficient regularity, then
\begin{align}
\| \bm{u} - \Pi^V \bm{u} \|_V
&\lesssim h^{r_v} \sum_i \| \bm{u} \|_{r_v, \Omega_i}
+ h^{r_w} \sum_i \| \nabla \cdot \bm{u} \|_{r_w, \Omega_i}
+ h_\Gamma^{r_\Lambda} \sum_{i < j} \| \lambda \|_{r_\Lambda, \Gamma_{ij}}
+ h^{\tilde r_v} \sum_{i < j} \| \lambda \|_{\tilde r_v, \Gamma_{ij}},
\label{approx-Pi-RT} 
\end{align}
for $0 < r_v \le 2$, $0 \le r_w \le 1$, $0 \le r_\Lambda \le k_\Lambda+1$, 
and $0 \le \tilde r_v \le 1$.
\end{lemma}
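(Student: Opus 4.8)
The plan is to decompose $\bm{u} - \Pi^V \bm{u}$ using the definitions in \eqref{Pi-defn-RT} and estimate each piece separately, closely following the argument behind Lemma~\ref{lem: approximation prop Pi} but accounting for the replacement of $\myR_h^\flat$ by its $\RT_0$-counterpart $\myR_h^{\flat,\RT}$ (and analogously in the sharp case). For the flat variant, the identity in \eqref{Pi-flat-RT} writes $\bm{u} - \Pi_\flat^V\bm{u} = (\bm{u} - \tilde\Pi^V\bm{u}) - \myR_h^{\flat,\RT}(\Pi^\Lambda\lambda - \lambda) - (\myR_h^{\flat,\RT}\lambda - \myR_h^{\flat}\lambda)$. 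The first term is handled by the standard interpolation estimates \eqref{eq: approx prop v_i}--\eqref{eq: approx prop div v}, giving the $h^{r_v}\|\bm{u}\|_{r_v,\Omega_i}$ and $h^{r_w}\|\nabla\cdot\bm{u}\|_{r_w,\Omega_i}$ contributions. The second term is controlled by the continuity of $\myR_h^{\flat,\RT}$ (the $\RT_0$ analogue of Lemma~\ref{lem:R}, whose proof is identical to \cite[Lemma~3.3]{boon2020flux}) together with the approximation property \eqref{eq: approx prop lambda} of $\Pi^\Lambda$, yielding the $h_\Gamma^{r_\Lambda}\|\lambda\|_{r_\Lambda,\Gamma_{ij}}$ term.

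The genuinely new term is $\myR_h^{\flat,\RT}\lambda - \myR_h^{\flat}\lambda$, which quantifies the mismatch between projecting the Neumann datum onto the $\RT_0$ normal trace space versus the BDM$_1$/BDDF$_1$ one. First I would reduce its $V$-norm, via the continuity of the extension operators applied to the difference, to the interface discrepancy $\sum_{i<j}\|\myQ_{h,i}^{\flat,\RT}\lambda - \myQ_{h,i}^\flat\lambda\|_{\Gamma_{ij}}$; since both are $L^2$-orthogonal projections of the same $\lambda$ onto nested-in-$L^2$ spaces, this is bounded by $\|\lambda - \myQ_{h,i}^{\flat,\RT}\lambda\|_{\Gamma_{ij}}$, which by the $\RT_0$ trace approximation property (the analogue of \eqref{eq: approx prop Q}, but now with exponent range $0\le\tilde r_v\le1$ because $\RT_0$ normal traces are only piecewise constant) is $\lesssim h^{\tilde r_v}\|\lambda\|_{\tilde r_v,\Gamma_{ij}}$. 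This accounts for the extra $h^{\tilde r_v}\sum_{i<j}\|\lambda\|_{\tilde r_v,\Gamma_{ij}}$ term and explains the restriction $\tilde r_v\le 1$. For the sharp variant, \eqref{Pi-sharp-RT} expresses $\Pi_\sharp^V\bm{u}$ as $\Pi_\flat^V\bm{u}$ plus $(\myR_h^{\sharp,\RT} - \myR_h^{\flat,\RT})\Pi^\Lambda\lambda$; arguing as in \cite[Lemma~3.8]{boon2020flux} that this correction term is itself bounded by the interface difference $\|\myQ_{h,i}^{\sharp,\RT}\Pi^\Lambda\lambda - \myQ_{h,i}^{\flat,\RT}\Pi^\Lambda\lambda\|_{\Gamma_{ij}}$, which collapses into the same families of terms already present, so no further contributions arise and the bound \eqref{approx-Pi-RT} holds uniformly for both choices.

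The main obstacle I anticipate is bounding the term $\myR_h^{\flat,\RT}\lambda - \myR_h^{\flat}\lambda$ cleanly: one must verify that the two extension operators, which solve the mixed subdomain problem \eqref{eq: R_h problem Darcy} with different essential boundary data, differ in the $V$-norm by no more than the difference of those data in $L^2(\Gamma_i)$ — this requires an a~priori bound for \eqref{eq: R_h problem Darcy} in terms of the Neumann datum, which follows from the Brezzi theory for the subdomain problem together with the discrete trace inequality \eqref{discrete-trace-Darcy}, exactly as in the proof of Lemma~\ref{lem:R}. Once that bound is in place, the remaining steps are routine applications of the approximation properties already collected in \eqref{eqs: approx props}, and the restriction $0\le\tilde r_v\le 1$ (rather than $\le 2$ as in the simplicial sharp case) is dictated precisely by the piecewise-constant nature of the $\RT_0$ normal traces entering $\myQ_h^{\flat,\RT}$ and $\myQ_h^{\sharp,\RT}$.
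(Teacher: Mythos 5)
Your proposal is correct and follows essentially the same route as the paper: both isolate the extra term $\myR_h^{\flat,\RT}\lambda - \myR_h^{\flat}\lambda$ in \eqref{Pi-flat-RT}, bound it via the continuity of the discrete extension applied to the boundary-data difference $\myQ_{h,i}^{\flat,\RT}\lambda - \myQ_{h,i}^{\flat}\lambda$, and trace the restriction $\tilde r_v \le 1$ to the piecewise-constant $\RT_0$ normal traces, deferring the remaining terms to Lemma~\ref{lem: approximation prop Pi}. Your write-up merely spells out in more detail the projection-comparison step that the paper leaves implicit.
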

\begin{proof}
Compared to the second expression in \eqref{Pi-flat}, there is an additional term $\myR_h^{\flat,\RT}\lambda - \myR_h^{\flat}\lambda$ in \eqref{Pi-flat-RT}. Since this is the discrete extension \eqref{eq: R_h problem Darcy} with boundary data $\myQ_{h, i}^{\flat,\RT}\lambda - \myQ_{h, i}^{\flat}\lambda$ in \eqref{Rh-bc}, the continuity of $\myR_{h,i}$ \eqref{R-cont}, cf. Lemma~\ref{lem:R}, implies that $\|\myR_{h,i}^{\flat,\RT}\lambda - \myR_{h,i}^{\flat}\lambda\|_{\Omega_i} \lesssim \|\myQ_{h, i}^{\flat,\RT}\lambda - \myQ_{h, i}^{\flat}\lambda\|_{\Omega_i}$, which leads to the last term in \eqref{approx-Pi-RT}. The rest of the terms are bounded as in the proof of Lemma~\ref{lem: approximation prop Pi}, cf. \cite[Lemma~3.8]{boon2020flux}. We note that both variants of $\Pi^V \bm{u}$ now have the same approximation properties. We also remark that, compared to  
the range of the index $\tilde r_v$ in \eqref{approx-Pi-sharp}, we now have 
$0 \le \tilde r_v \le 1$. The reason for the change is that the space $V_{h,i}^{\Gamma,\RT}$ contains piecewise constant functions, rather than piecewise linears, as is the case for $V_{h,i}^\Gamma$. 
\end{proof}

Let $\Pi_i^\RT: V_i \cap (H^\epsilon(\Omega_i))^n \to V_{h, i}^\RT$ be the canonical mixed interpolant of $\RT_0$, with properties
\begin{subequations}
\begin{align} 
(\nabla \cdot (\bm{v}_{i} - \Pi_i^\RT \bm{v}_{i}), w_{h, i})_{\Omega_i} &= 0, &
\forall \, w_{h, i} &\in W_{h, i}, \label{eq: commutativity RT}\\
(\bnu_i\cdot(\bm{v}_{i} - \Pi_i^\RT \bm{v}_{i}), \xi_{h, i})_{\Gamma_i} &= 0, &
\forall \, \xi_{h, i} &\in V_{h, i}^{\Gamma,\RT}, \label{eq: Pi-trace-RT}\\
\|\Pi_i^\RT\bv_i\|_{1,\omega} &\lesssim \|\bv_i\|_{1,\omega}, & 
\forall \, \bv_i &\in H^1(\omega). \label{Pi-cont-RT}
\end{align}
\end{subequations}
Moreover, $\Pi_i^\RT$ preserves the divergence and is continuous:
\begin{subequations}
\begin{align}
	\nabla\cdot\Pi_i^{\RT}\bv_{h,i} &= \nabla\cdot\bv_{h,i}, &
	\quad \forall \, \bv_{h,i} &\in V_{h,i}, \label{div-Pi-RT} \\
	\|\Pi_i^\RT \bv_{h,i}\|_{\omega} &\lesssim \|\bv_{h,i}\|_{\omega}, &
	\quad \forall \, \bv_{h,i} &\in V_{h,i}. \label{Pi-RT-cont} 
\end{align}
\end{subequations}

\subsubsection{Error estimate}
\label{ssub: Error estimate quads}

Using the interpolation operators from \Cref{ssub: Interpolation operators quads}, we arrive at the following error estimate.

\begin{theorem}\label{thm:error-RT}
In the case of $h^2$-parallelograms or $h^2$-parallelepipeds, it holds that
	\begin{align} \label{eq: Error estimate general RT}
		\| \bm{u} - \bm{u}_h \|_V
		+
		\| p - p_h \|_W
		\lesssim
		\| \tilde \Pi^V \bm{u} - \bm{u} \|_V
                + \| \Pi^V \bm{u} - \bm{u} \|_V
		+
		\| \Pi^W p - p \|_W
		+
		\myE_c^\RT + \myE_\sigma^\RT + \myE_h^\RT,
	\end{align}
where 
\begin{subequations}
\begin{align}
& \myE_c^\RT := \sup_{ 0 \ne \bm{v}_h \in V_h}
\frac{a(\bm{u}, \Pi^\RT \bm{v}_h) - b(\Pi^\RT\bm{v}_h, p)}{\| \bm{v}_h \|_V},
\label{consist-error-RT}\\
&  \myE_\sigma^\RT : = \sup_{ 0 \ne \bm{v}_h \in V_h}
  \frac{\sigma(\tilde\Pi^V\bm{u}, \Pi^\RT\bm{v}_h)}{\| \bm{v}_h \|_V},
  \label{quad-error-RT}\\
& \myE_h^\RT := \sup_{ 0 \ne \bm{v}_h \in V_h}
  \frac{a^h(\tilde\Pi^V\bm{u}, \bm{v}_h - \Pi^\RT\bm{v}_h)}{\| \bm{v}_h \|_V}.
  \label{ah-error}
\end{align}
\end{subequations}
\end{theorem}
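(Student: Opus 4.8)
The plan is to mimic the proof of Theorem~\ref{thm:error} but to route the velocity test functions through the $\RT_0$ interpolant $\Pi^\RT$, which is exactly the device that makes the quadrature error manageable on quadrilaterals and hexahedra. First I would form the error equations from \eqref{eq: general form-h} and \eqref{eq: general form 2} with the interpolant $\Pi^V\bm{u}$ defined in \eqref{Pi-defn-RT}: for all $(\bm{v}_h, w_h)\in V_h\times W_h$,
\begin{align*}
a^h(\Pi^V\bm{u}-\bm{u}_h,\bm{v}_h)-b(\bm{v}_h,\Pi^W p-p_h)
&= a^h(\Pi^V\bm{u},\bm{v}_h)-b(\bm{v}_h,p),\\
b(\Pi^V\bm{u}-\bm{u}_h,w_h) &= 0,
\end{align*}
where the second equation uses the $b$-compatibility of $\Pi^V$ (Lemma~\ref{lem: B-compatible}, which still holds per the remark after \eqref{Pi-defn-RT}) together with $W_h\subset W$.

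Next I would decompose the right-hand side $a^h(\Pi^V\bm{u},\bm{v}_h)-b(\bm{v}_h,p)$ in the spirit of \eqref{error-rhs}, but inserting $\Pi^\RT\bm{v}_h$ at the appropriate places so that the quadrature term is only ever evaluated against a piecewise-linear second argument. Writing $\bm{v}_h=(\bm{v}_h-\Pi^\RT\bm{v}_h)+\Pi^\RT\bm{v}_h$ in the term $a^h(\tilde\Pi^V\bm{u},\bm{v}_h)$, the correction $a^h(\tilde\Pi^V\bm{u},\bm{v}_h-\Pi^\RT\bm{v}_h)$ is precisely the numerator of $\myE_h^\RT$ from \eqref{ah-error}. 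Splitting $a^h(\tilde\Pi^V\bm{u},\Pi^\RT\bm{v}_h)=a(\tilde\Pi^V\bm{u},\Pi^\RT\bm{v}_h)-\sigma(\tilde\Pi^V\bm{u},\Pi^\RT\bm{v}_h)$ isolates the numerator of $\myE_\sigma^\RT$ from \eqref{quad-error-RT}; and the remaining genuinely consistency-type contribution, after adding and subtracting $a(\bm{u},\Pi^\RT\bm{v}_h)-b(\Pi^\RT\bm{v}_h,p)$ and $a(\tilde\Pi^V\bm{u}-\bm{u},\Pi^\RT\bm{v}_h)$, produces the numerator of $\myE_c^\RT$ from \eqref{consist-error-RT} plus an approximation term $a(\tilde\Pi^V\bm{u}-\bm{u},\Pi^\RT\bm{v}_h)$ controlled by $\|\tilde\Pi^V\bm{u}-\bm{u}\|_V$ using \eqref{Pi-RT-cont}. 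The leftover piece $a^h(\Pi^V\bm{u}-\tilde\Pi^V\bm{u},\bm{v}_h)$ is bounded by $\|\Pi^V\bm{u}-\tilde\Pi^V\bm{u}\|_V$, and one checks via the triangle inequality that this is controlled by $\|\Pi^V\bm{u}-\bm{u}\|_V+\|\tilde\Pi^V\bm{u}-\bm{u}\|_V$.

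With the right-hand side so decomposed, I would then run the standard energy argument verbatim: choose test functions $\bm{v}_h:=\Pi^V\bm{u}-\bm{u}_h-\delta\bm{v}_h^p$ and $w_h:=\Pi^W p-p_h$ with $\bm{v}_h^p$ from the inf-sup condition \eqref{ineq: b_infsup} satisfying \eqref{eq: properties v_h^p}, use the coercivity \eqref{ineq: a_coercive} and continuity \eqref{ineq: a_cont} of $a^h$, apply Young's inequality term by term with small parameters, absorb the $\|\Pi^V\bm{u}-\bm{u}_h\|_V^2$ contributions into the left side, then fix $\delta$ small to absorb $\delta^2\|\Pi^W p-p_h\|_W^2$. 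This yields $\|\Pi^V\bm{u}-\bm{u}_h\|_V+\|\Pi^W p-p_h\|_W\lesssim \|\tilde\Pi^V\bm{u}-\bm{u}\|_V+\|\Pi^V\bm{u}-\bm{u}\|_V+\myE_c^\RT+\myE_\sigma^\RT+\myE_h^\RT$, and a final triangle inequality (adding $\|\bm{u}-\Pi^V\bm{u}\|_V$ and $\|p-\Pi^W p\|_W$) gives \eqref{eq: Error estimate general RT}.

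The main obstacle — and the reason the statement carries three error terms rather than two — is the bookkeeping around which argument of $\sigma(\cdot,\cdot)$ and of $a(\cdot,\cdot)-b(\cdot,\cdot)$ carries $\Pi^\RT$ and which does not. One must be careful that in the term $a^h(\tilde\Pi^V\bm{u}-\bm{u}_h,\delta\bm{v}_h^p)$ and in $a^h(\Pi^V\bm{u}-\tilde\Pi^V\bm{u},\bm{v}_h)$ the full $\bm{v}_h$ (not $\Pi^\RT\bm{v}_h$) appears, so that the continuity of $a^h$ and the bound $\|\bm{v}_h\|_V\lesssim\|\Pi^V\bm{u}-\bm{u}_h\|_V+\delta\|\Pi^W p-p_h\|_W$ can be used directly; only the term involving $\tilde\Pi^V\bm{u}$ against $\bm{v}_h$ needs the split that spawns $\myE_h^\RT$, and only there does the quadratic content of $S_n$ cause trouble. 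Everything else is the same machinery as in Theorem~\ref{thm:error}; the subsequent task of bounding $\myE_c^\RT$, $\myE_\sigma^\RT$, and $\myE_h^\RT$ (using \eqref{Pi-RT-cont}, \eqref{Pi-cont-RT}, and the restriction of the second argument to $\RT_0$) is deferred to the lemmas following this theorem, exactly as $\myE_c$ and $\myE_\sigma$ were handled after Theorem~\ref{thm:error}.
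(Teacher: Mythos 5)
Your proposal is correct and follows essentially the same route as the paper: the identical decomposition of $a^h(\Pi^V\bm{u},\bm{v}_h)-b(\bm{v}_h,p)$ that inserts $\Pi^\RT\bm{v}_h$ to isolate the numerators of $\myE_h^\RT$, $\myE_\sigma^\RT$, and $\myE_c^\RT$, followed by the unchanged energy argument from Theorem~\ref{thm:error}. The only step you leave implicit is that replacing $b(\bm{v}_h,p)$ by $b(\Pi^\RT\bm{v}_h,p)$ in the consistency term is justified by the divergence preservation \eqref{div-Pi-RT}, which the paper invokes explicitly at that point.
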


\begin{proof}
  We start with the error equations \eqref{eqs: step1 errors} from the proof of Theorem~\ref{thm:error}, but now we manipulate the right hand side of \eqref{err-eq-1} in a different way:
\begin{align*}
  \begin{split}
  & a^h(\Pi^V\bm{u}, \bm{v}_h) - b(\bm{v}_h, p)
  = a^h(\Pi^V\bm{u} - \tilde\Pi^V\bm{u}, \bm{v}_h) 
  + a^h(\tilde\Pi^V\bm{u}, \bm{v}_h - \Pi^\RT\bm{v}_h) \\
& \qquad   
+ a^h(\tilde\Pi^V\bm{u}, \Pi^\RT\bm{v}_h)
  - a(\tilde\Pi^V\bm{u}, \Pi^\RT\bm{v}_h) 
  + a(\tilde\Pi^V\bm{u} - \bu, \Pi^\RT\bm{v}_h)
  + a(\bm{u}, \Pi^\RT\bm{v}_h) - b(\Pi^\RT\bm{v}_h, p),
\end{split}
  \end{align*}
where we used \eqref{div-Pi-RT} for the last term.
We recognize that the second term on the right forms the numerator of the error 
$\myE_h^\RT$ from \eqref{ah-error},
the third and fourth terms form the numerator of the quadrature error $\myE_\sigma^\RT$ from \eqref{quad-error-RT}, and the last two terms form the numerator of the consistency error $\myE_c^\RT$ from \eqref{consist-error-RT}. The rest of the proof follows the proof of Theorem~\ref{thm:error}.
\end{proof}

We proceed with the bounds of the three error terms on the right in \eqref{eq: Error estimate general RT}. For $\myE_c^\RT$, recalling \eqref{eq: nonconformity term}, we write
\begin{align} \label{eq: nonconformity term RT}
  \myE_c^\RT = \sup_{\bm{v}_h \in V_h} \| \bm{v}_h \|_V^{-1}
  \sum_i -(p, \bm{\nu}_i \cdot \Pi_i^\RT\bm{v}_{h, i})_{\Gamma_i}
  = \sup_{\bm{v}_h \in V_h} \| \bm{v}_h \|_V^{-1}
  \sum_i -(p, \bm{\nu}_i \cdot \bm{v}_{h, i})_{\Gamma_i},
\end{align}
where we used that $\Pi_i^\RT$ satisfies \eqref{eq: Pi-trace-RT} and that by construction $\bv_{h,i} \in V_{h,i}^{\Gamma,\RT}$. Therefore, the arguments leading to (3.33) and (3.34) in \cite{boon2020flux} also hold in this case, leading the following result, similar to Lemma~\ref{lem:consist-error}.

\begin{lemma}\label{lem:consist-error-RT}
Let \ref{A: mortar condition RT} hold.
In the case $\myR_h = \myR_h^{\flat,\RT}$, it holds that
\begin{equation}\label{consist-error-flat-RT}
\myE_c^\RT \lesssim h^{-\frac12} \sum_i \| p - \mathcal{Q}_{h, i}^{\flat,\RT} p \|_{\Gamma_i}.
\end{equation}
In the case $\myR_h = \myR_h^{\sharp,\RT}$, it holds that
\begin{equation}\label{consist-error-sharp-RT}
  \myE_c^\RT \lesssim \| p - \Pi_c^\Lambda p \|_{\frac12, \Gamma}.
\end{equation}
\end{lemma}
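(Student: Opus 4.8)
The plan is to reduce Lemma~\ref{lem:consist-error-RT} to the already-established Lemma~\ref{lem:consist-error} by exploiting the identity \eqref{eq: nonconformity term RT}. Indeed, the crucial observation has already been made in the text preceding the statement: since $\Pi_i^\RT$ satisfies the trace-preservation property \eqref{eq: Pi-trace-RT} and since, by construction, $\bnu_i\cdot\bv_{h,i}\in V_{h,i}^{\Gamma,\RT}$ (the normal traces of functions in $V_h$ are generated by the extension operator $\myR_h^{\flat,\RT}$ or $\myR_h^{\sharp,\RT}$, which both take values in the $\RT_0$ trace space $V_h^{\Gamma,\RT}$ or $V_{h,c}^{\Gamma,\RT}$), we have $(p,\bnu_i\cdot\Pi_i^\RT\bv_{h,i})_{\Gamma_i}=(p,\bnu_i\cdot\bv_{h,i})_{\Gamma_i}$. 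Hence $\myE_c^\RT$ coincides with the same supremum expression as $\myE_c$ in \eqref{eq: nonconformity term}, only with the $\RT_0$ versions of the trace space and of the projection operators substituted throughout.

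Concretely, I would argue as follows. First, rewrite $\myE_c^\RT$ using \eqref{eq: nonconformity term RT} so that it appears as a supremum over $\bm{v}_h\in V_h$ of $\|\bm{v}_h\|_V^{-1}\sum_i-(p,\bnu_i\cdot\bm{v}_{h,i})_{\Gamma_i}$. Second, since $\bnu_i\cdot\bm{v}_{h,i}=\myQ_{h,i}\mu_h$ for some $\mu_h\in\Lambda_h$ (where $\myQ_{h,i}$ is either $\myQ_{h,i}^{\flat,\RT}$ or $\myQ_{h,i}^{\sharp,\RT}$), subtract the mean: in the flat case one uses that $\myQ_{h,i}^{\flat,\RT}$ is the $L^2(\Gamma_i)$-projection onto $V_{h,i}^{\Gamma,\RT}$, so $\sum_i-(p,\bnu_i\cdot\bm{v}_{h,i})_{\Gamma_i}=\sum_i-(p-\myQ_{h,i}^{\flat,\RT}p,\bnu_i\cdot\bm{v}_{h,i})_{\Gamma_i}$; then Cauchy--Schwarz, the discrete trace inequality \eqref{discrete-trace-Darcy}, the mortar condition \ref{A: mortar condition RT}, and continuity of $\myR_h$ (Lemma~\ref{lem:R}) give $\myE_c^\RT\lesssim h^{-1/2}\sum_i\|p-\myQ_{h,i}^{\flat,\RT}p\|_{\Gamma_i}$, which is \eqref{consist-error-flat-RT}. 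For the sharp case, one inserts the Scott--Zhang interpolant $\Pi_c^\Lambda p$ and uses the weak continuity built into $V_{h,c}^{\Gamma,\RT}$ together with \ref{A: mortar condition sharp RT} to absorb the interface term, exactly as in the derivation of \eqref{consist-error-sharp} in \cite[(3.33)]{boon2020flux}, yielding \eqref{consist-error-sharp-RT}.

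Since all of these estimates are \emph{verbatim} the arguments behind \cite[(3.33)--(3.34)]{boon2020flux} and Lemma~\ref{lem:consist-error}, with $V_{h,i}^\Gamma$ replaced by $V_{h,i}^{\Gamma,\RT}$, $\myQ_{h,i}^\flat$ by $\myQ_{h,i}^{\flat,\RT}$, and $\myQ_{h,i}^\sharp$ by $\myQ_{h,i}^{\sharp,\RT}$, the proof amounts to verifying that each ingredient of those arguments survives the substitution. The one thing to check carefully is that the discrete trace inequality \eqref{discrete-trace-Darcy} still holds for the BDM$_1$/enhanced-BDDF$_1$ spaces on $h^2$-parallelograms and $h^2$-parallelepipeds, and that Lemma~\ref{lem:R} (continuity of $\myR_h$) holds with the $\RT_0$ boundary data --- but this has already been asserted in \Cref{sub: well-posedness quads}. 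The main (mild) obstacle is therefore bookkeeping: making sure the reduction via \eqref{eq: nonconformity term RT} is airtight, i.e., that every normal trace appearing in the supremum genuinely lies in the $\RT_0$ trace space so that $\Pi_i^\RT$ acts as the identity on it; once that is granted, the estimate is an immediate transcription of the simplicial proof.

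\begin{proof}
Using \eqref{eq: nonconformity term RT}, we have
\begin{align*}
\myE_c^\RT = \sup_{0 \ne \bm{v}_h \in V_h} \| \bm{v}_h \|_V^{-1}
  \sum_i -(p, \bm{\nu}_i \cdot \bm{v}_{h, i})_{\Gamma_i},
\end{align*}
where we have used that $\Pi_i^\RT$ satisfies \eqref{eq: Pi-trace-RT} and that $\bnu_i\cdot\bm{v}_{h,i}\in V_{h,i}^{\Gamma,\RT}$ by construction of $V_h$. This is precisely the expression for $\myE_c$ in \eqref{eq: nonconformity term}, with $V_{h,i}^\Gamma$, $\myQ_{h,i}^\flat$, and $\myQ_{h,i}^\sharp$ replaced by their $\RT_0$ counterparts $V_{h,i}^{\Gamma,\RT}$, $\myQ_{h,i}^{\flat,\RT}$, and $\myQ_{h,i}^{\sharp,\RT}$.

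In the case $\myR_h = \myR_h^{\flat,\RT}$, since $\myQ_{h,i}^{\flat,\RT}$ is the $L^2(\Gamma_i)$-orthogonal projection onto $V_{h,i}^{\Gamma,\RT}$ and $\bnu_i\cdot\bm{v}_{h,i}\in V_{h,i}^{\Gamma,\RT}$, we may subtract $\myQ_{h,i}^{\flat,\RT}p$ to obtain
\begin{align*}
\sum_i -(p, \bm{\nu}_i \cdot \bm{v}_{h, i})_{\Gamma_i}
= \sum_i -(p - \myQ_{h,i}^{\flat,\RT}p, \bm{\nu}_i \cdot \bm{v}_{h, i})_{\Gamma_i}
\le \sum_i \|p - \myQ_{h,i}^{\flat,\RT}p\|_{\Gamma_i} \|\bm{\nu}_i\cdot\bm{v}_{h,i}\|_{\Gamma_i}.
\end{align*}
Applying the discrete trace inequality \eqref{discrete-trace-Darcy}, which holds for these elements, followed by the continuity of $\myR_h$ from Lemma~\ref{lem:R} and the stability estimate \eqref{stab-est}, exactly as in the derivation of \eqref{consist-error-flat} in \cite[(3.34)]{boon2020flux}, yields \eqref{consist-error-flat-RT}.

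In the case $\myR_h = \myR_h^{\sharp,\RT}$, by construction $\bnu_i\cdot\bm{v}_{h,i}\in V_{h,c}^{\Gamma,\RT}$, so $\sum_i(\mu_h,\bm{\nu}_i\cdot\bm{v}_{h,i})_{\Gamma_i}=0$ for all $\mu_h\in\Lambda_h$. Inserting the Scott--Zhang interpolant $\Pi_c^\Lambda p \in \Lambda_{h,c}\subset\Lambda_h$, we obtain
\begin{align*}
\sum_i -(p, \bm{\nu}_i \cdot \bm{v}_{h, i})_{\Gamma_i}
= \sum_i -(p - \Pi_c^\Lambda p, \bm{\nu}_i \cdot \bm{v}_{h, i})_{\Gamma_i},
\end{align*}
and the bound proceeds as in the derivation of \eqref{consist-error-sharp} in \cite[(3.33)]{boon2020flux}, using \ref{A: mortar condition RT} and \ref{A: mortar condition sharp RT} together with the continuity of $\myR_h$, to give \eqref{consist-error-sharp-RT}.
\end{proof}
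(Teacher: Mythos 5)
Your overall strategy --- reduce $\myE_c^\RT$ to the simplicial expression via \eqref{eq: nonconformity term RT} and then transcribe the arguments behind \cite[(3.33)--(3.34)]{boon2020flux} with the $\RT_0$ trace spaces and projections substituted --- is exactly what the paper does, and your treatment of the sharp case (inserting $\Pi_c^\Lambda p$ using the weak continuity of $V_{h,c}^{\Gamma,\RT}$) is sound. However, your execution of the flat case contains a genuine error. You claim that, because $\myQ_{h,i}^{\flat,\RT}$ is the $L^2(\Gamma_i)$-projection onto $V_{h,i}^{\Gamma,\RT}$ and $\bnu_i\cdot\bv_{h,i}\in V_{h,i}^{\Gamma,\RT}$, one has
\begin{equation*}
\sum_i -(p, \bm{\nu}_i \cdot \bm{v}_{h, i})_{\Gamma_i}
= \sum_i -(p - \myQ_{h,i}^{\flat,\RT}p, \bm{\nu}_i \cdot \bm{v}_{h, i})_{\Gamma_i}.
\end{equation*}
This is backwards: orthogonality of the projection gives $(p - \myQ_{h,i}^{\flat,\RT}p,\xi_{h,i})_{\Gamma_i}=0$ for every $\xi_{h,i}\in V_{h,i}^{\Gamma,\RT}$, so the right-hand side of your identity is \emph{identically zero}, and your displayed equality would force $\myE_c^\RT=0$, contradicting the nontrivial bound you are trying to prove. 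The projection cannot simply be ``subtracted'' in that slot.

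The correct manipulation, which is the one underlying \cite[(3.34)]{boon2020flux}, uses the jump structure of the mortar variable rather than orthogonality against the trace directly. Write $\bnu_i\cdot\bv_{h,i}=\myQ_{h,i}^{\flat,\RT}\mu_h$ for some $\mu_h\in\Lambda_h$; since $p$ is single-valued on $\Gamma_{ij}$ while $\mu_{h,i}=-\mu_{h,j}$, one has $\sum_i(p,\mu_h)_{\Gamma_i}=0$, so the numerator equals $\sum_i -(p,\myQ_{h,i}^{\flat,\RT}\mu_h-\mu_h)_{\Gamma_i}$. \emph{Now} orthogonality (applied with $\myQ_{h,i}^{\flat,\RT}p$ in the first slot, since $\myQ_{h,i}^{\flat,\RT}\mu_h-\mu_h$ is orthogonal to $V_{h,i}^{\Gamma,\RT}$) lets you replace $p$ by $p-\myQ_{h,i}^{\flat,\RT}p$, and Cauchy--Schwarz, the mortar condition \ref{A: mortar condition RT} to control $\|\mu_h\|_{\Gamma_i}$ by the projected traces, and the discrete trace inequality \eqref{discrete-trace-Darcy} then yield \eqref{consist-error-flat-RT}. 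Without this insertion of $\mu_h$ via the sign convention, the flat-case bound does not follow.
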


For the quadrature error, we refer to \cite[Lemma~3.5]{WheYot} for $h^2$-parallelograms and \cite[Lemma~3.8]{IngWheYot} for $h^2$-parallelepipeds, where it is shown that
\begin{equation}\label{sigma-bound-RT}
  \sigma_i(\bq,\bv) \lesssim \sum_{\omega \in \Omega_{h,i}} h \|K^{-1}\|_{1,\infty,\omega}  \|\bq\|_{1,\omega}\|\bv\|_\omega   \quad \forall \, \bq \in V_{h,i}, \bv \in V_{h,i}^\RT.
\end{equation}

\begin{lemma}\label{lem:Esigma-bound-RT}
Assuming that $K^{-1}|_\omega \in W^{1,\infty}(\omega)$ for all elements $\omega$, it holds that
\begin{equation}\label{Esigma-bound-RT}
  \myE_\sigma^\RT \lesssim h \sum_i \|\bu\|_{1,\Omega_i}.
\end{equation}
\end{lemma}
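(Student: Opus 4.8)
The plan is to mirror the proof of Lemma~\ref{lem:Esigma-bound} exactly, substituting the $\RT_0$-specific quadrature bound \eqref{sigma-bound-RT} for \eqref{sigma-bound} and the continuity property \eqref{Pi-RT-cont} of $\Pi_i^\RT$ for \eqref{Pi-cont}. First I would unfold the definition \eqref{quad-error-RT} of $\myE_\sigma^\RT$: for each fixed $\bm{v}_h \in V_h$ we have $\sigma(\tilde\Pi^V\bm{u}, \Pi^\RT\bm{v}_h) = \sum_i \sigma_i(\tilde\Pi_i^V\bm{u}_i, \Pi_i^\RT\bm{v}_{h,i})$, where by construction the first argument $\tilde\Pi_i^V\bm{u}_i$ lies in $V_{h,i}$ (the BDM$_1$/enhanced BDDF$_1$ space) and the second argument $\Pi_i^\RT\bm{v}_{h,i}$ lies in $V_{h,i}^\RT$, so the hypotheses of \eqref{sigma-bound-RT} are met.

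Next I would apply \eqref{sigma-bound-RT} with $\bq = \tilde\Pi_i^V\bm{u}_i$ and $\bv = \Pi_i^\RT\bm{v}_{h,i}$, which gives
\begin{align*}
\sigma_i(\tilde\Pi_i^V\bm{u}_i, \Pi_i^\RT\bm{v}_{h,i})
\lesssim \sum_{\omega \in \Omega_{h,i}} h \|K^{-1}\|_{1,\infty,\omega}
\|\tilde\Pi_i^V\bm{u}_i\|_{1,\omega} \|\Pi_i^\RT\bm{v}_{h,i}\|_\omega.
\end{align*}
Then I would use the element-wise continuity \eqref{Pi-cont} of $\tilde\Pi^V$ to replace $\|\tilde\Pi_i^V\bm{u}_i\|_{1,\omega}$ by $\|\bm{u}_i\|_{1,\omega}$, the continuity \eqref{Pi-RT-cont} of $\Pi_i^\RT$ on $V_{h,i}$ to replace $\|\Pi_i^\RT\bm{v}_{h,i}\|_\omega$ by $\|\bm{v}_{h,i}\|_\omega$, and boundedness of $\|K^{-1}\|_{1,\infty,\omega}$ (from the assumption $K^{-1}|_\omega \in W^{1,\infty}(\omega)$). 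Summing over elements $\omega$ and subdomains $i$, applying Cauchy--Schwarz to the sum, and noting $\|\bm{v}_h\|_\Omega \le \|\bm{v}_h\|_V$, yields $\sigma(\tilde\Pi^V\bm{u}, \Pi^\RT\bm{v}_h) \lesssim h \big(\sum_i \|\bm{u}\|_{1,\Omega_i}\big)\|\bm{v}_h\|_V$. Dividing by $\|\bm{v}_h\|_V$ and taking the supremum gives \eqref{Esigma-bound-RT}.

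I do not anticipate a serious obstacle here; the lemma is essentially an immediate corollary of \eqref{sigma-bound-RT} together with the interpolation continuity bounds, exactly as in the simplicial case. The one point that requires a moment of care is checking that the arguments fed into $\sigma_i(\cdot,\cdot)$ are in the right spaces so that \eqref{sigma-bound-RT} is applicable: the first argument must be in $V_{h,i}$ and the second in $V_{h,i}^\RT$. This is guaranteed because $\Pi^\RT$ maps into $V_h^\RT$ by definition, and $\tilde\Pi^V\bm{u}$ is the subdomain BDM$_1$/BDDF$_1$ interpolant of $\bm{u}$; the regularity assumption on $\bm{u}$ (namely $\bm{u}|_{\Omega_i}\in (H^1(\Omega_i))^n$, which is what makes $\|\bm{u}\|_{1,\Omega_i}$ finite) ensures $\tilde\Pi^V\bm{u}$ is well-defined. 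Once that bookkeeping is in place, the estimate follows as stated.
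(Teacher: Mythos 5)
Your proposal is correct and follows exactly the paper's argument: the paper's proof likewise combines the definition \eqref{quad-error-RT} with the quadrature bound \eqref{sigma-bound-RT} and the continuity properties \eqref{Pi-cont} and \eqref{Pi-RT-cont}, which is precisely the chain of estimates you spell out. Your additional bookkeeping that the arguments of $\sigma_i(\cdot,\cdot)$ land in $V_{h,i}$ and $V_{h,i}^\RT$ respectively is the right point to check and is handled correctly.
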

\begin{proof}
The result follows from the definition \eqref{quad-error-RT} of $\myE_\sigma^\RT$, bound \eqref{sigma-bound-RT}, and the continuity properties  \eqref{Pi-cont} and \eqref{Pi-RT-cont}.
\end{proof}

Finally, $\myE_h^\RT$ has been bounded in \cite[Lemma~3.3]{WheYot} for $h^2$-parallelograms and \cite[Lemma~3.7]{IngWheYot} for $h^2$-parallelepipeds.

\begin{lemma}\label{lem:Eh-bound}
Assuming that $K^{-1}|_\omega \in W^{1,\infty}(\omega)$ for all elements $\omega$, it holds that
\begin{equation}\label{Eh-bound}
  \myE_h^\RT \lesssim h \sum_i \|\bu\|_{1,\Omega_i}.
\end{equation}
\end{lemma}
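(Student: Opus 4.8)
The plan is to reduce the global bound to an element-by-element estimate that is already available for the MFMFE method on nearly-parallelogram grids, and then to assemble it using the continuity of the interpolants, exactly as in the proof of Lemma~\ref{lem:Esigma-bound-RT}.

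First I would fix $0 \ne \bm{v}_h \in V_h$ and split the numerator of $\myE_h^\RT$ over subdomains and elements, writing $a^h(\tilde\Pi^V\bu, \bm{v}_h - \Pi^\RT\bm{v}_h) = \sum_i \sum_{\omega \in \Omega_{h,i}} a^h_\omega(\tilde\Pi_i^V\bu, \bm{v}_{h,i} - \Pi_i^\RT\bm{v}_{h,i})$, where $a^h_\omega$ denotes the contribution of $\omega$ to the quadrature form. On each element $\omega$, both $\tilde\Pi_i^V\bu|_\omega$ and $\bm{v}_{h,i}|_\omega$ belong to the local BDM$_1$ / enhanced BDDF$_1$ space and $\Pi_i^\RT$ is the canonical $\RT_0$ interpolant, so the element contribution is precisely the quantity estimated in \cite[Lemma~3.3]{WheYot} on $h^2$-parallelograms and in \cite[Lemma~3.7]{IngWheYot} on $h^2$-parallelepipeds; this gives $|a^h_\omega(\tilde\Pi_i^V\bu, \bm{v}_{h,i} - \Pi_i^\RT\bm{v}_{h,i})| \lesssim h\,\|K^{-1}\|_{1,\infty,\omega}\,\|\tilde\Pi_i^V\bu\|_{1,\omega}\,\|\bm{v}_{h,i}\|_\omega$.

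Next I would invoke the local continuity bound \eqref{Pi-cont} for $\tilde\Pi_i^V$ to replace $\|\tilde\Pi_i^V\bu\|_{1,\omega}$ by $\|\bu\|_{1,\omega}$ (which requires only $\bu|_{\Omega_i} \in H^1(\Omega_i)$, implicit in the right-hand side of \eqref{Eh-bound}), use the uniform bound on $\|K^{-1}\|_{1,\infty,\omega}$ furnished by the assumption $K^{-1}|_\omega \in W^{1,\infty}(\omega)$, and then sum. The Cauchy--Schwarz inequality over the elements of $\Omega_{h,i}$ gives $\sum_{\omega} \|\bu\|_{1,\omega}\|\bm{v}_{h,i}\|_\omega \le \|\bu\|_{1,\Omega_i}\|\bm{v}_{h,i}\|_{\Omega_i}$, and summing over $i$ together with $\|\bm{v}_{h,i}\|_{\Omega_i} \le \|\bm{v}_h\|_V$ yields $a^h(\tilde\Pi^V\bu, \bm{v}_h - \Pi^\RT\bm{v}_h) \lesssim h\,\|\bm{v}_h\|_V \sum_i \|\bu\|_{1,\Omega_i}$. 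Dividing by $\|\bm{v}_h\|_V$ and taking the supremum delivers \eqref{Eh-bound}.

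The only substantive ingredient is the element-wise estimate of \cite{WheYot,IngWheYot}, and this is where the real work sits (although here it is used as a black box): on a general quadrilateral the quadrature form paired against the non-$\RT_0$ (curl/bubble) component of a velocity field is merely $O(1)$, and it is exactly the $O(h^2)$ deviation of the element from a parallelogram, combined with an inverse inequality on the reference cell, that produces the $O(h)$ gain. Given that lemma, the rest is the routine assembly sketched above, identical in structure to the proof of Lemma~\ref{lem:Esigma-bound-RT}.
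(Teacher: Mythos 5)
Your proposal is correct and follows essentially the same route as the paper, which proves this lemma simply by citing the element-wise estimates of \cite[Lemma~3.3]{WheYot} and \cite[Lemma~3.7]{IngWheYot} and (implicitly) assembling them with the continuity of $\tilde\Pi^V$ exactly as in Lemma~\ref{lem:Esigma-bound-RT}. The only difference is that you spell out the summation and Cauchy--Schwarz step that the paper leaves to the reader.
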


Combining Theorem~\ref{thm:error-RT}, Lemma~\ref{lem:consist-error-RT}, Lemma~\ref{lem:Esigma-bound-RT}, Lemma~\ref{lem:Eh-bound},
and the approximation properties
\eqref{eqs: approx props}, \eqref{eq: approx prop v}, and
\eqref{eq: approx Scott Zhang}, we arrive at the main error estimate.

\begin{theorem}\label{thm:error-final-RT}
  Assume that \ref{A: mortar condition RT} holds,
  that the solution to \eqref{weak-model} is sufficiently smooth, and that the grids consist of $h^2$-parallelograms or $h^2$-parallelepipeds. Then, in the case $\myR_h = \myR_h^{\flat,\RT}$, it holds that
  \begin{align*}
\| \bm{u} - \bm{u}_h \|_V + \| p - p_h \|_W
  \lesssim &\ h \left( \sum_i \left(\| \bm{u} \|_{1, \Omega_i}
+ \| \nabla \cdot \bm{u} \|_{1, \Omega_i}
+ \| p \|_{1, \Omega_i} \right)   
  + \sum_{i < j} \| \lambda \|_{1, \Gamma_{ij}} \right) \\
  &+ h_\Gamma^{k_\Lambda + 1} \sum_{i < j} \| \lambda \|_{k_\Lambda + 1, \Gamma_{ij}}
+ \sum_i h^{\frac12}\| p \|_{1, \Gamma_i}.
  \end{align*}
In the case $\myR_h = \myR_h^{\sharp,\RT}$, it holds that
\begin{align*}
\| \bm{u} - \bm{u}_h \|_V + \| p - p_h \|_W
  \lesssim &\ h \left( \sum_i \left(\| \bm{u} \|_{1, \Omega_i}
+ \| \nabla \cdot \bm{u} \|_{1, \Omega_i}
+ \| p \|_{1, \Omega_i} \right)   
  + \sum_{i < j} \| \lambda \|_{1, \Gamma_{ij}} \right) \\
&+ h_\Gamma^{k_\Lambda + 1} \sum_{i < j} \| \lambda \|_{k_\Lambda + 1, \Gamma_{ij}} 
+ h_\Gamma^{k_\Lambda + \frac12} \| p \|_{k_\Lambda + 1, \Gamma}.
\end{align*}
\end{theorem}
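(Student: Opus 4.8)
The plan is to follow the same template as the proof of Theorem~\ref{thm:error-final}, now using the $\RT$-variants of all the ingredients. I would begin from the abstract estimate \eqref{eq: Error estimate general RT} of Theorem~\ref{thm:error-RT} and bound each of the six terms on its right-hand side separately, then collect the contributions and apply the triangle inequality.

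First, the three interpolation terms. The term $\| \tilde\Pi^V\bm{u} - \bm{u} \|_V$ is controlled by \eqref{eq: approx prop v_i} and \eqref{eq: approx prop div v} with $r_v = r_w = 1$, contributing $h \sum_i (\|\bm{u}\|_{1,\Omega_i} + \|\nabla\cdot\bm{u}\|_{1,\Omega_i})$. The term $\| \Pi^V\bm{u} - \bm{u} \|_V$ is controlled by \eqref{approx-Pi-RT} of Lemma~\ref{lem: approximation prop Pi RT} with $r_v = r_w = \tilde r_v = 1$ and $r_\Lambda = k_\Lambda + 1$, contributing additionally $h \sum_{i<j}\|\lambda\|_{1,\Gamma_{ij}} + h_\Gamma^{k_\Lambda+1}\sum_{i<j}\|\lambda\|_{k_\Lambda+1,\Gamma_{ij}}$. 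The term $\| \Pi^W p - p \|_W$ is controlled by \eqref{eq: approx prop w} with $r_w = 1$, contributing $h\sum_i\|p\|_{1,\Omega_i}$.

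Second, the quadrature-type errors $\myE_\sigma^\RT$ and $\myE_h^\RT$ are bounded directly by Lemma~\ref{lem:Esigma-bound-RT} and Lemma~\ref{lem:Eh-bound}, each contributing $h\sum_i\|\bm{u}\|_{1,\Omega_i}$. Third, the consistency error $\myE_c^\RT$ is treated separately for the two projection choices via Lemma~\ref{lem:consist-error-RT}. In the case $\myR_h = \myR_h^{\flat,\RT}$, I would apply \eqref{consist-error-flat-RT} and then use that $\myQ_{h,i}^{\flat,\RT}$ is the $L^2(\Gamma_i)$-projection onto the piecewise-constant space $V_{h,i}^{\Gamma,\RT}$, so a standard projection estimate gives $\|p - \myQ_{h,i}^{\flat,\RT}p\|_{\Gamma_i} \lesssim h\|p\|_{1,\Gamma_i}$; multiplying by the $h^{-1/2}$ factor in \eqref{consist-error-flat-RT} yields the term $\sum_i h^{1/2}\|p\|_{1,\Gamma_i}$. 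In the case $\myR_h = \myR_h^{\sharp,\RT}$, I would combine \eqref{consist-error-sharp-RT} with the Scott--Zhang bound \eqref{eq: approx Scott Zhang} using $s_\Lambda = \tfrac12$ and $r_\Lambda = k_\Lambda + 1$, yielding $h_\Gamma^{k_\Lambda + \frac12}\|p\|_{k_\Lambda+1,\Gamma}$. Summing all contributions gives the two claimed estimates.

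The only non-routine point is the loss of half an order of convergence in the interface pressure term for the flat case: since $V_{h,i}^{\Gamma,\RT}$ consists of piecewise constants, rather than the piecewise linears available in the simplicial case, the $L^2(\Gamma_i)$-projection error is only $O(h)$, and the $h^{-1/2}$ trace-inverse factor in \eqref{consist-error-flat-RT} then produces the suboptimal term $\sum_i h^{1/2}\|p\|_{1,\Gamma_i}$. For the same reason, Lemma~\ref{lem: approximation prop Pi RT} restricts $\tilde r_v \le 1$, which is why $h\sum_{i<j}\|\lambda\|_{1,\Gamma_{ij}}$ rather than a higher-order term enters the estimate. All remaining manipulations are identical to those in the proof of Theorem~\ref{thm:error-final}.
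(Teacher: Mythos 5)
Your proposal is correct and follows exactly the route the paper takes: the paper's proof is precisely the combination of Theorem~\ref{thm:error-RT}, Lemmas~\ref{lem:consist-error-RT}, \ref{lem:Esigma-bound-RT}, \ref{lem:Eh-bound}, and the approximation properties, with the term-by-term choices of indices you describe. Your explanation of why the flat case loses half an order (the piecewise-constant trace space $V_{h,i}^{\Gamma,\RT}$ limits the $L^2(\Gamma_i)$-projection to $O(h)$, which the $h^{-1/2}$ factor in \eqref{consist-error-flat-RT} degrades to $h^{1/2}\|p\|_{1,\Gamma_i}$) correctly identifies the one genuinely new point relative to the simplicial case.
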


\section{Non-overlapping domain decomposition}
\label{sec: DD}

In this section we present a non-overlapping domain decomposition algorithm for the solution of the algebraic system of the flux-mortar MFMFE method. It is based on reduction to an interface problem for the flux-mortar variable. We further develop a preconditioner for the resulting interface problem. The general domain decomposition methodology is based on techniques developed in \cite{GW}. We refer to \cite[Section~2.5]{boon2020flux} for a detailed presentation and analysis of the method. 

\subsection{Reduction to an interface problem}

We assume that \ref{A: mortar condition} hold in the case of $\myQ_{h, i}^\flat$ and that
  \ref{A: mortar condition} and \ref{A: mortar condition sharp} hold in the case of $\myQ_{h, i}^\sharp$, in which case the mortar solution $\lambda_h$ of \eqref{eq: general form-h} is unique, see Theorem~\ref{thm:lambda}.

In the solution process we utilize the extension
$\tilde\myR_h\mu \in \bigoplus_i  V_{h,i}$ with $\Tr \tilde\myR_{h,i}\mu
= \mathcal{Q}_{h,i}\mu$ on $\Gamma_i$, such that all of its degrees of
freedom not associated with $\Gamma_i$ are equal to zero. We also utilize the orthogonal decomposition
$\Lambda_h = \Lambda_h^0 \oplus \overline\Lambda_h$, where
\begin{align} \label{eq: def Lambda^0}
\Lambda_h^0 := \{ \mu \in \Lambda_h : \ b(\tilde\myR_h \mu, s) = 0, \ \forall s \in S_H \}.
\end{align}
Let $B:
\Lambda_h \to S_H'$ be defined as: $\forall \, \mu \in \Lambda_h, \langle B
\mu, s\rangle := b(\tilde\myR_h\mu,s) \ \forall \, s \in S_H$.

The solution method has the following five steps. 

\begin{enumerate}
\item Find $\overline\lambda_f \in \overline\Lambda_h$ such that
\begin{align}
  b(\tilde\myR_h \overline\lambda_f, s) = \langle f, s \rangle,
  \quad \forall s &\in S_H. \label{preproblem}
\end{align}
This is a global coarse problem $B \overline\lambda_f = \overline f$ in $S_H'$, which captures the influence of $f$ in \eqref{dd-3} with respect to the space $S_H$, c.f. \eqref{SH-characterization}. 

\item Use $\overline\lambda_f$ to solve independent, local subproblems to
satisfy \eqref{dd-1} and capture the remaining influence of $f$ in \eqref{dd-3}: Find
$(\bm{u}_f^0, p_f^0, r_f) \in V_h^0 \times W_h \times S_H$ such that
\begin{subequations}\label{pre-2}
\begin{align}
	a^h(\bm{u}_f^0, \bm{v}^0) - b(\bm{v}^0, p_f^0)
	&=
	- a^h(\tilde\myR_h \overline\lambda_f, \bm{v}^0)
	+ \langle g, \bm{v}^0 \rangle
	,
	& \forall \bm{v}^0 &\in V_h^0, \\
	b(\bm{u}_f^0, w) - (r_f, w)_{\Omega}
	&= - b(\tilde\myR_h \overline\lambda_f, w) + \langle f, w \rangle,
	& \forall w &\in W_h,
	\\
	(p_f^0, s)_{\Omega}
	&= 0,
	& \forall s &\in S_H.
\end{align}
\end{subequations}
We note that setting $w =
r_f \in S_H$ and using \eqref{preproblem} with \eqref{eq: definition S_H} implies that $r_f = 0$. Therefore, the variable $\bm{u}_f := \bm{u}_f^0 + \tilde\myR_h \overline\lambda_f$ satisfies \eqref{dd-3}. In addition, $\bm{u}_f$ and $p_f^0$ satisfy \eqref{dd-1}. 

\item Satisfy the continuity equation \eqref{dd-2} in $\Lambda_h^0$ by solving the interface problem:
Find $\lambda^0 \in \Lambda_h^0$ such that
\begin{align}\label{eqs: reduced problem}
a^h(\myR_h \lambda^0, \tilde\myR_h \mu^0) - b(\tilde\myR_h \mu^0, p^{\lambda^0}) &=
- a^h(\bm{u}_f, \tilde\myR_h \mu^0) + b(\tilde\myR_h \mu^0, p_f^0), &
\forall \mu^0 \in \Lambda_h^0,
\end{align}
where $(\myR_h \lambda^0, p^{\lambda^0})$ solve the discrete extension problem \eqref{eq: R_h problem Darcy}. 

\item Guarantee that \eqref{dd-2} holds in $\overline\Lambda_h$ and obtain the correct variable $p$: Find $\overline{p}_\lambda \in S_H$ such that
\begin{align} \label{eq: postproblem}
b(\tilde\myR_h \overline\mu, \overline{p}_\lambda)
&= a^h(\bm{u}_f + \myR_h \lambda^0, \tilde\myR_h \overline\mu)
- b(\tilde\myR_h \overline\mu, p^{\lambda^0} + p_f^0), &
\forall \overline\mu \in \overline\Lambda_h,
\end{align}
which is a coarse grid problem $B^T \overline{p}_\lambda = \overline{g}$ in
$\overline\Lambda_h'$.

\item Construct:
\begin{align}\label{eqs: construct solution}
  \bm{u} := \bm{u}_f + \myR_h \lambda^0 = 
  \bm{u}_f^0 + \myR_h \lambda^0 + \tilde\myR_h\overline\lambda_f, \quad
  p := p_f^0 + p^{\lambda^0} + \overline{p}_\lambda, \quad \lambda = \lambda^0 + \overline\lambda_f.
\end{align}
It is easy to check that $(\bm{u}, p) \in V_h \times W_h$ solve \eqref{eq: general form-h}.

\end{enumerate}

The interface problem \eqref{eqs: reduced problem} is of the type $a_\Gamma(\lambda^0,\mu^0) = \langle r, \mu^0 \rangle$, where $a_\Gamma(\lambda^0,\mu^0) := a^h(\myR_h \lambda^0, \tilde\myR_h \mu^0) - b(\tilde\myR_h \mu^0, p^{\lambda^0})$. It is shown in \cite[Lemma~2.5]{boon2020flux} that $a_\Gamma(\lambda^0,\mu^0) = a^h(\myR_h \lambda^0, \tilde\myR_h \mu^0)$ and that it is symmetric and positive definite on $\Lambda_h^0\times \Lambda_h^0$. Therefore \eqref{eqs: reduced problem} can be solved using a Krylov space iterative method such as the Conjugate Gradient or GMRES.  

\subsection{Preconditioner for the interface problem}
\label{sub: preconditioner}

To speed up the convergence of the iterative interface solver for \eqref{eqs: reduced problem} we develop a Dirichlet--Dirichlet type preconditioner, cf. the FETI Dirichlet preconditioner for the primal formulation \cite{Toselli-Widlund}, which requires solving local Dirichlet problems at each iteration: Given $g \in L^2(\Gamma)$, for $i = 1, \ldots, n_\Omega$, find $(\bm{u}_{h,i}^g, p_{h,i}^g) \in V_{h,i}\times W_{h,i}$ such that
\begin{subequations} \label{eq: R_h Dir}
\begin{align}
	a_i^h(\bm{u}_{h,i}^g, \bm{v}_{h, i})
	- b_i(\bm{v}_{h, i}, p_{h, i}^g)
	&= (g,\bm{v}_{h, i}\cdot\bnu_i)_{\Gamma_i}
	, && \forall \bm{v}_{h, i} \in V_{h, i}, \label{RhD-eq1}
	\\
	b_i(\bm{u}_{h,i}^g, w_{h, i}) 
	&= 0
	, && \forall w_{h, i} \in W_{h, i}. \label{RhD-eq2}
\end{align}
\end{subequations}
The preconditioner $M^{-1}: \Lambda_h' \to \Lambda_h'$ is defined as follows: 
$$
\forall \, g \in \Lambda_h', \quad \langle M^{-1} g, \mu \rangle = \sum_{i=1}^{n_\Omega} (\bm{u}_{h,i}^g\cdot\bnu_i,\mu)_{\Gamma_i} \quad \forall \mu \in \Lambda_h.
$$
In other words, the preconditioner takes Dirichlet mortar data on the interfaces, solves subdomain problems, and returns the jump in flux. This is known as a Dirichlet-to-Neumann operator.

Let the interface problem \eqref{eqs: reduced problem} be written in an operator form as
$$
S\lambda^0 = r \quad \mbox{in } (\Lambda_h^0)',
$$
where $S:\Lambda_h \to \Lambda_h'$ is defined as follows: $\forall \, \lambda \in \Lambda_h$, 
$\langle S\lambda,\mu \rangle = a_\Gamma(\lambda,\mu) \ \forall \, \mu \in \Lambda_h$. The preconditioned problem is
$$
M^{-1}S\lambda^0 = M^{-1}r \quad \mbox{in } (\Lambda_h^0)',
$$
which is implemented as solving
$$
P M^{-1}S P\lambda = P M^{-1}r \quad \mbox{in } \Lambda_h'
$$
and setting $\lambda^0 =  P\lambda$, where $P = I - B^T(B B^T)^{-1}B$ is the projection operator onto $\Lambda_h^0 = \ker(B)$. In practice, one of the applications of $P$ on the left hand side can be omitted, since the iterate is in $\Lambda_h^0$. In particular, we solve $P M^{-1}S \lambda = P M^{-1}r \mbox{ in } \Lambda_h'$. We note that 
$B B^T: S_H \to S_H'$, thus the application of $(B B^T)^{-1}$ involves solving a coarse problem. In fact, this is the same coarse operator as in \eqref{preproblem}, which is implemented as solving $B B^T r_f = \overline f$ in $S_H'$ and setting $\overline\lambda_f = B^T r_f$, as well as in \eqref{eq: postproblem}, which is implemented as solving $B B^T \overline{p}_\lambda = B \overline{g}$ in $S_H'$. We note that the operator $B B^T$ is invertible, since $B$ is an isomorphism from $\overline\Lambda_h$ to $S_H'$, which is shown in \cite[Lemma~2.4]{boon2020flux}.

The analysis of the condition number of the preconditioned operator $M^{-1}S$ is beyond the scope of this work. In \cite[Theorem~6.15]{Toselli-Widlund} it is shown that in the case of the primal formulation for elliptic problems with matching subdomain grids the condition number is $O(1 + \log(\frac{H}{h}))^2$, where $H$ is the subdomain size. This implies that the number of iterations of the preconditioned Krylov solver grows very weakly when the grids are refined. We observe a similar behavior in our numerical experiments.

\section{Numerical results} 
\label{sec: numerical results}

In this section, we present several numerical experiments in order to verify the analytical results and illustrate the flexibility of the proposed method. The experiments are subdivided into four examples. The first is presented
in~\Cref{sub: convergence tests} and investigates the orders of convergence with respect to the mesh size for different element types, as well as the efficiency of the interface preconditioner. \Cref{sub: fault} concerns an example that simulates a geological case with low permeable layers and a highly conductive fault zone. Third, we present an example with a highly heterogeneous permeability based on the Society of Petroleum Engineers SPE10 benchmark in \Cref{sub:SPE10}, which illustrates the multiscale capabilities of the method. Finally, in the example in \Cref{sub:highly_oscillatory_permeability} the subdomain grids are chosen according to the local spatial frequency of the permeability, resulting in an a suitable local resolution of the solution.

The numerical tests are performed using an implementation of the method in DuMu$^\text{X}$ \cite{Kochetal2020Dumux}, which uses the MPFA method of \Cref{ssub: MPFA} as the local discretization of the subdomain problems. Due to the close relationship with the MFMFE method, we expect the analytical results of \Cref{sub: error analysis simplices,sub: Error analysis quads} to remain valid for the MPFA method.

\iftrue 

\subsection{Example 1: Convergence tests}
\label{sub: convergence tests}

The \emph{a priori} analysis presented in \Cref{thm:error-final,thm:error-final-RT} shows that the flux-mortar MFMFE is expected to converge with first order in both the pressure and velocity variables. We test this by considering examples with known solutions in two and a three dimensions. The domains are decomposed into subdomains of equal size. Starting with an initial coarse discretization with non-matching meshes, we run a sequence of uniform mesh refinements. We investigate the rates of convergence in the $L^2$-norm for all variables except for $\bm{u}$, for which we use the following discrete error norm, which is equivalent to the $L^2$-norm in the space $V_h$:
\begin{equation}
    e_u = \sqrt{
      \sum_{\omega_i \in \Omega_h} 
      | \omega_i |
      \sum_{\gamma_{ij} \subset \partial \omega_i}
\frac{1}{| \gamma_{ij}|}
      \int_{\gamma_{ij}} \left(\bm{u} \cdot \bm{n} - \bm{u}_h \cdot \bm{n} \right)^2
    }.
\end{equation}

For both setups, we choose a continuous, piecewise linear mortar space $\mathcal{P}_1$ on $\Gamma$ and investigate both variants of the projection operator $\myQ_h^\flat$ and $\myQ_h^\sharp$.

\subsubsection{Two-dimensional setup}
\label{ssub: 2D}
Let the domain $\Omega = \left(0, 2\right) \times \left(0, 2\right)$, the
permeability $K = I$, and the pressure be given by:
    \begin{align}
        \tilde{p} \left( x, y \right)
    		&= y^2 \left( 1 - \frac{y}{3} \right)
    		   + x \left( 2-x \right) y \sin \left( 2 \pi x \right).
                   \label{eq:conv_2d_exact}
    \end{align}
Dirichlet boundary conditions are used and are given by~\cref{eq:conv_2d_exact}. The domain is decomposed into $3 \times 3$ subdomains of equal size,
discretized by unstructured meshes composed of either triangles or quadrilaterals.
On the coarsest level the subdomain grids have either six or eight elements along each subdomain side, while the mortar grid has three elements on each interface. We note that on the coarsest level the quadrilateral mesh has general quadrilaterals, but the uniform refinement strategy results in $h^2$-parallelograms.
\Cref{fig:conv_2d_results} shows the pressure and velocity distributions after
the first refinement, while
\Cref{tab:conv_2d_triangles,tab:conv_2d_quads}
show the errors and rates obtained on all refinements. 
First-order convergence can be observed for $p$ and $\bm{u}$ on both meshes and with both projection operators. We further observe that the number of iterations increases slightly upon refinement, which is consistent with the expected condition number $O(1 + \log(\frac{H}{h}))^2$ of the preconditioned interface operator, cf. \Cref{sub: preconditioner}.

\begin{figure}
    \centering
    \begin{subfigure}{0.32\textwidth}
        \includegraphics[width=\textwidth]{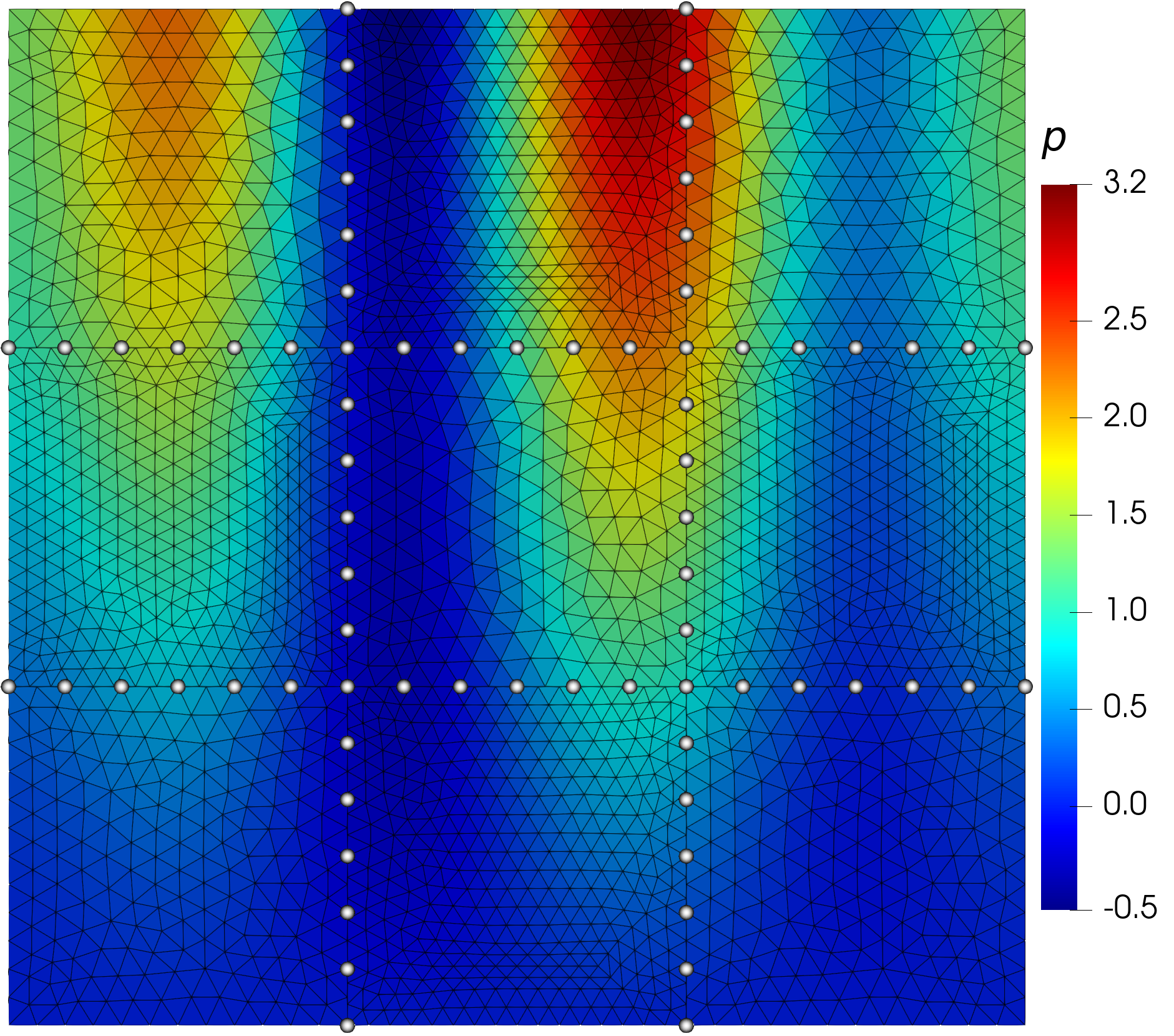}
        \caption{}
    \end{subfigure}
    \begin{subfigure}{0.32\textwidth}
        \includegraphics[width=\textwidth]{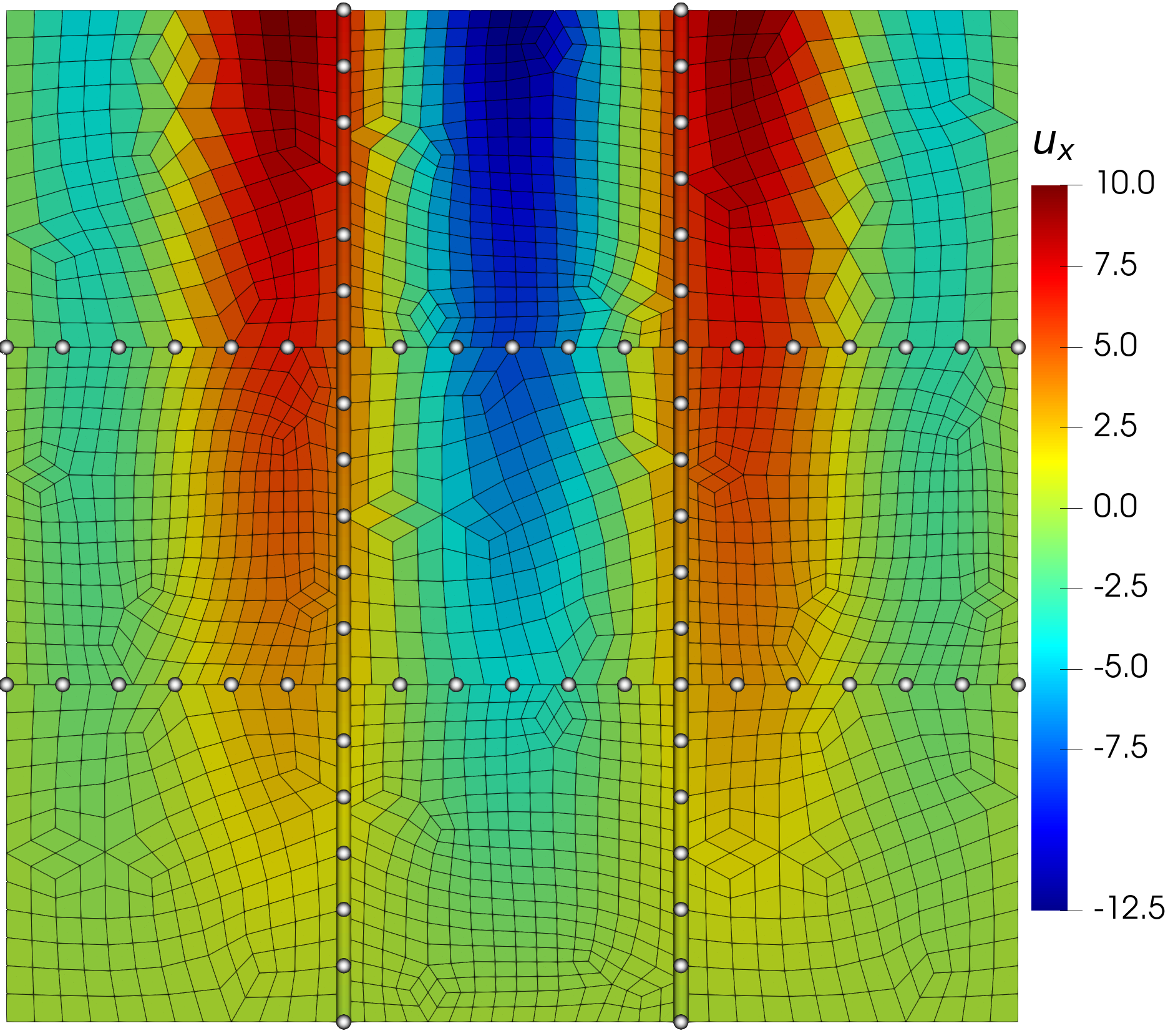}
        \caption{}
    \end{subfigure}
    \begin{subfigure}{0.32\textwidth}
        \includegraphics[width=\textwidth]{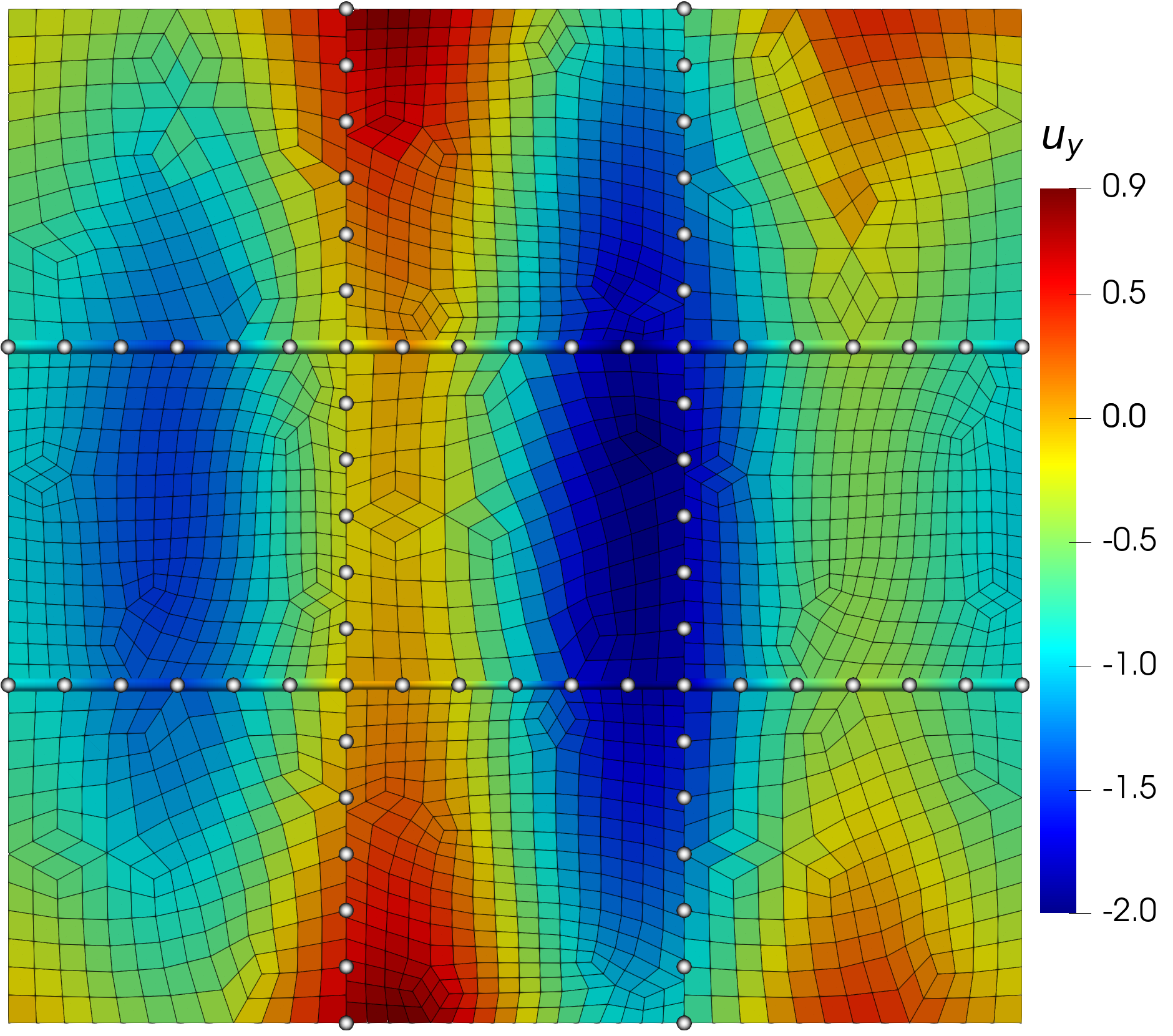}
        \caption{}
    \end{subfigure}
    \caption{Example 1 (2D): numerical solution on the first mesh refinement. Left: pressure on triangular grids. Middle and right: $x$ and $y$ components of the velocity on quadrilateral grids. The mortar grid vertices are indicated by white circles and the tubes in the velocity plots show the mortar solution.}
    \label{fig:conv_2d_results}
\end{figure}

\begin{table}
    {\footnotesize
      \caption{Example 1 (2D): errors and convergence rates on triangular grids.}
    \label{tab:conv_2d_triangles}
    \begin{center}
    \begin{tabular}{l |l l |l l |l l |l l |l}
        \toprule
        $h_{\mathrm{min}}$ & $e_u^\flat$ & $r_u^\flat$ & $e_p^\flat$ & $r_p^\flat$ & $e_\lambda^\flat$ & $r_\lambda^\flat$ & $e_{\mathcal{Q}\lambda}^\flat$ & $r_{\mathcal{Q}\lambda}^\flat$ & $\#it$ \\
        \midrule
        7.11e-02 & 1.42e+00 &      & 1.58e-01 &      & 1.93e-01 &      & 4.47e-01 &      & 29 \\
        3.56e-02 & 7.21e-01 & 0.98 & 7.88e-02 & 1.01 & 8.23e-02 & 1.23 & 2.18e-01 & 1.04 & 33 \\
        1.78e-02 & 3.64e-01 & 0.99 & 3.93e-02 & 1.00 & 4.82e-02 & 0.77 & 1.14e-01 & 0.93 & 36 \\
        8.89e-03 & 1.83e-01 & 0.99 & 1.97e-02 & 1.00 & 3.27e-02 & 0.56 & 6.42e-02 & 0.83 & 37 \\
        4.44e-03 & 9.15e-02 & 1.00 & 9.83e-03 & 1.00 & 2.37e-02 & 0.46 & 3.93e-02 & 0.71 & 38 \\
        2.22e-03 & 4.58e-02 & 1.00 & 4.92e-03 & 1.00 & 1.79e-02 & 0.41 & 2.65e-02 & 0.57 & 39 \\
        \toprule
        $h_{\mathrm{min}}$ & $e_u^\sharp$ & $r_u^\flat$ & $e_p^\sharp$ & $r_p^\sharp$ & $e_\lambda^\sharp$ & $r_\lambda^\sharp$ & $e_{\mathcal{Q}\lambda}^\sharp$ & $r_{\mathcal{Q}\lambda}^\sharp$ & $\#it$ \\
        \midrule
        7.11e-02 & 1.42e+00 &      & 1.58e-01 &      & 1.94e-01 &      & 4.48e-01 &      & 29 \\
        3.56e-02 & 7.21e-01 & 0.98 & 7.88e-02 & 1.01 & 8.23e-02 & 1.23 & 2.18e-01 & 1.04 & 33 \\
        1.78e-02 & 3.64e-01 & 0.99 & 3.93e-02 & 1.00 & 4.82e-02 & 0.77 & 1.14e-01 & 0.93 & 35 \\
        8.89e-03 & 1.83e-01 & 0.99 & 1.97e-02 & 1.00 & 3.27e-02 & 0.56 & 6.42e-02 & 0.83 & 37 \\
        4.44e-03 & 9.15e-02 & 1.00 & 9.83e-03 & 1.00 & 2.37e-02 & 0.46 & 3.93e-02 & 0.71 & 38 \\
        2.22e-03 & 4.58e-02 & 1.00 & 4.92e-03 & 1.00 & 1.79e-02 & 0.41 & 2.65e-02 & 0.57 & 39 \\
        \bottomrule
    \end{tabular}
    \end{center}
    }
\end{table}

\begin{table}
    {\footnotesize
      \caption{Example 1 (2D): errors and convergence rates on quadrilateral grids.}
    \label{tab:conv_2d_quads}
    \begin{center}
    \begin{tabular}{l |l l |l l |l l |l l |l}
        \toprule
        $h_{\mathrm{min}}$ & $e_u^\flat$ & $r_u^\flat$ & $e_p^\flat$ & $r_p^\flat$ & $e_\lambda^\flat$ & $r_\lambda^\flat$ & $e_{\mathcal{Q}\lambda}^\flat$ & $r_{\mathcal{Q}\lambda}^\flat$ & $\#it$ \\
        \midrule
        8.31e-02 & 1.17e+00 &      & 2.16e-01 &      & 2.74e-01 &      & 5.24e-01 &      & 30 \\
        4.12e-02 & 6.19e-01 & 0.91 & 1.06e-01 & 1.01 & 1.02e-01 & 1.40 & 2.34e-01 & 1.15 & 34 \\
        2.03e-02 & 3.21e-01 & 0.93 & 5.29e-02 & 0.99 & 5.05e-02 & 1.00 & 1.17e-01 & 0.99 & 36 \\
        1.01e-02 & 1.64e-01 & 0.96 & 2.64e-02 & 1.00 & 3.11e-02 & 0.69 & 6.30e-02 & 0.88 & 38 \\
        5.05e-03 & 8.32e-02 & 0.98 & 1.32e-02 & 1.00 & 2.19e-02 & 0.50 & 3.76e-02 & 0.74 & 39 \\
        2.52e-03 & 4.18e-02 & 0.99 & 6.61e-03 & 1.00 & 1.65e-02 & 0.41 & 2.49e-02 & 0.59 & 40 \\
        \toprule
        $h_{\mathrm{min}}$ & $e_u^\sharp$ & $r_u^\flat$ & $e_p^\sharp$ & $r_p^\sharp$ & $e_\lambda^\sharp$ & $r_\lambda^\sharp$ & $e_{\mathcal{Q}\lambda}^\sharp$ & $r_{\mathcal{Q}\lambda}^\sharp$ & $\#it$ \\
        \midrule
        8.31e-02 & 1.17e+00 &      & 2.16e-01 &      & 2.74e-01 &      & 5.25e-01 &      & 30 \\
        4.12e-02 & 6.19e-01 & 0.91 & 1.06e-01 & 1.01 & 1.02e-01 & 1.40 & 2.34e-01 & 1.15 & 34 \\
        2.03e-02 & 3.21e-01 & 0.93 & 5.29e-02 & 0.99 & 5.05e-02 & 0.99 & 1.17e-01 & 0.99 & 36 \\
        1.01e-02 & 1.64e-01 & 0.96 & 2.64e-02 & 1.00 & 3.11e-02 & 0.69 & 6.30e-02 & 0.88 & 38 \\
        5.05e-03 & 8.32e-02 & 0.98 & 1.32e-02 & 1.00 & 2.19e-02 & 0.50 & 3.75e-02 & 0.75 & 39 \\
        2.52e-03 & 4.18e-02 & 0.99 & 6.61e-03 & 1.00 & 1.65e-02 & 0.41 & 2.49e-02 & 0.59 & 40 \\
        \bottomrule
    \end{tabular}
    \end{center}
    }
\end{table}

\subsubsection{Three-dimensional setup}
\label{ssub: 3D}

Let the domain now be $\Omega = \left(0, 2\right) \times \left(0, 2\right) \times \left(0, 2\right)$.
Based on \eqref{eq:conv_2d_exact}, we construct a three-dimensional analytical solution by multiplying 
$p(x, y)$ with $\cos \left( 2 \pi z \right)$ and use it as Dirichlet
boundary condition. The domain is subdivided into 8 subdomains,
and we again consider two grid types comprising tetrahedra or hexahedra.
On the coarsest level, the subdomain tetrahedral grids have either two or three elements along each subdomain edge, while the hexahedral grids have either three or five elements. In both cases the mortar grid is $2\times 2$ quadrilateral on each interface. Similarly to the 2D case, the uniform refinement in the hexahedral grids results in $h^2$-parallelepiped elements. \Cref{fig:conv_3d_results} shows the pressure distributions obtained on the second refinement,
and~\Cref{tab:conv_3d_hexes,tab:conv_3d_tets}
list the errors and convergence rates for all refinements.
First-order convergence in $p$ and $\bm{u}$ is again observed for both meshes and projection operators. The number of iterations grows slowly with the
refinement level, however the dependence is more pronounced compared to the two-dimensional tests. This may be due to the effect of the projection between the mortar space and the normal trace of the subdomain velocity spaces, which in 3D are defined on two-dimensional grids. Moreover, using the projection operator $\myQ_h^\sharp$ requires several more iterations in comparison with $\myQ_h^\flat$, possibly caused by the fact that the preconditioner from \Cref{sub: preconditioner} is based on the $L^2$-projection $\myQ_h^\flat$.

\begin{figure}
    \centering
    \begin{subfigure}{0.32\textwidth}
        \includegraphics[width=\textwidth]{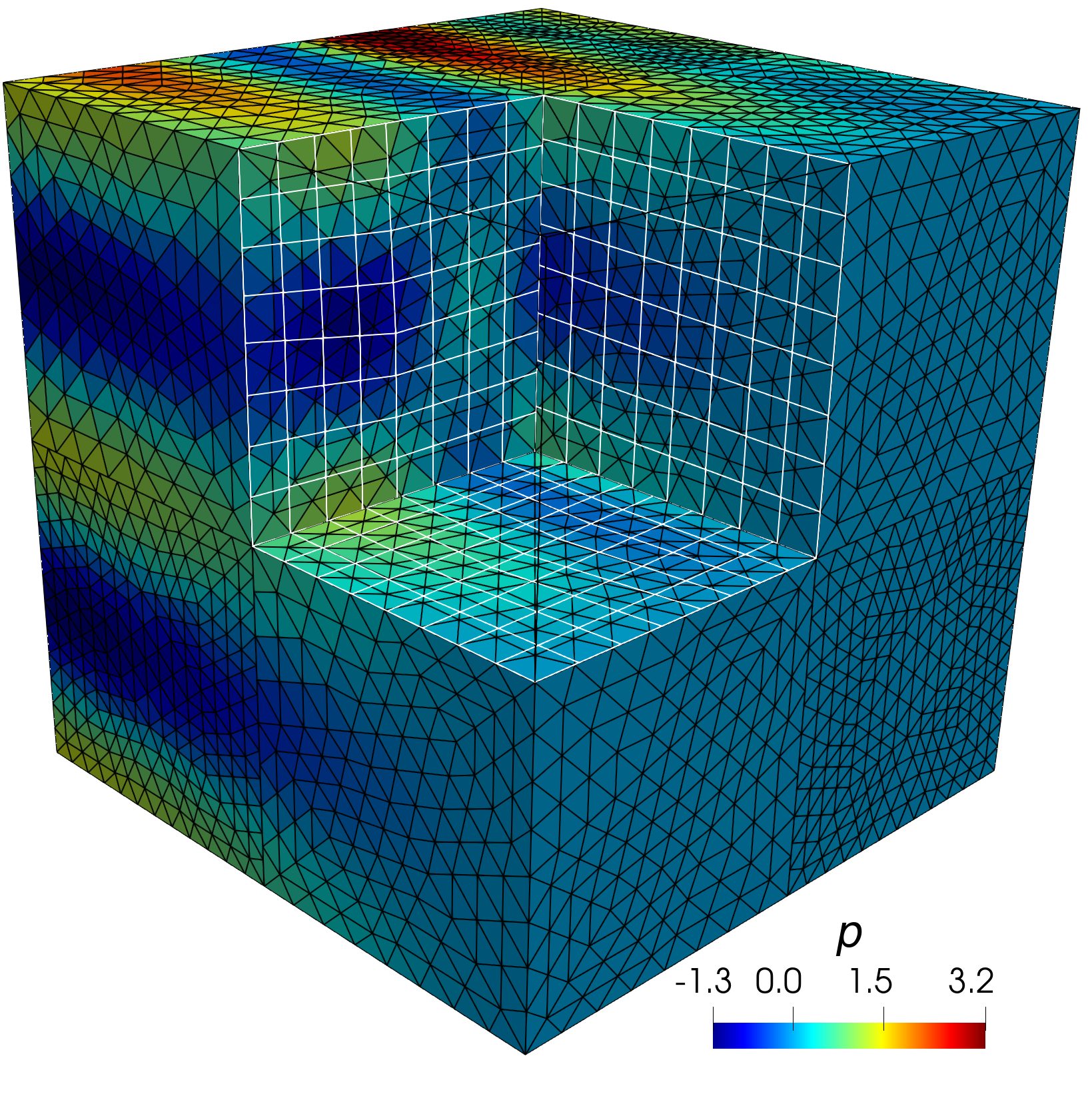}
        \caption{}
    \end{subfigure}
    \begin{subfigure}{0.32\textwidth}
        \includegraphics[width=\textwidth]{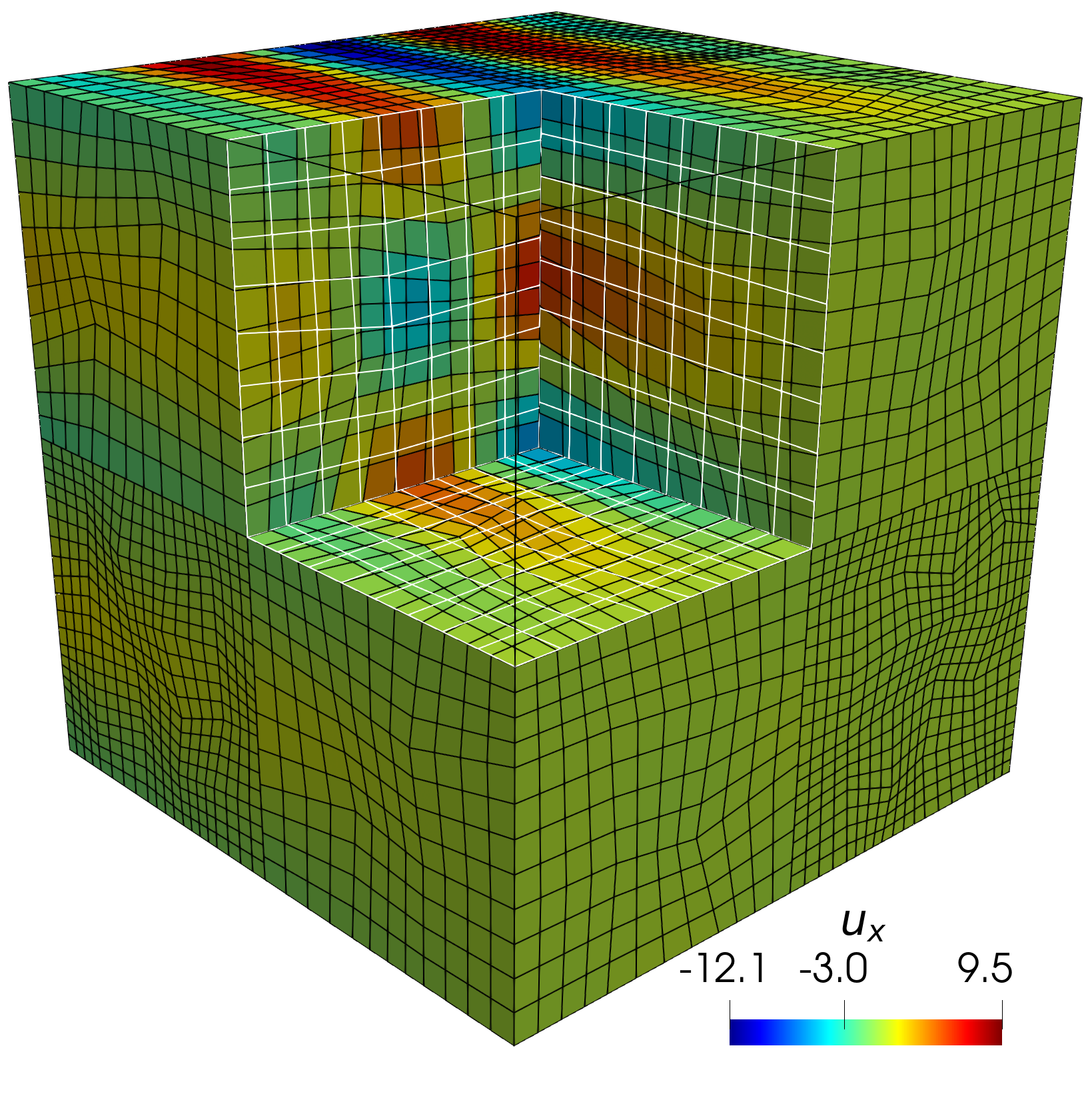}
        \caption{}
    \end{subfigure}
    \begin{subfigure}{0.32\textwidth}
        \includegraphics[width=\textwidth]{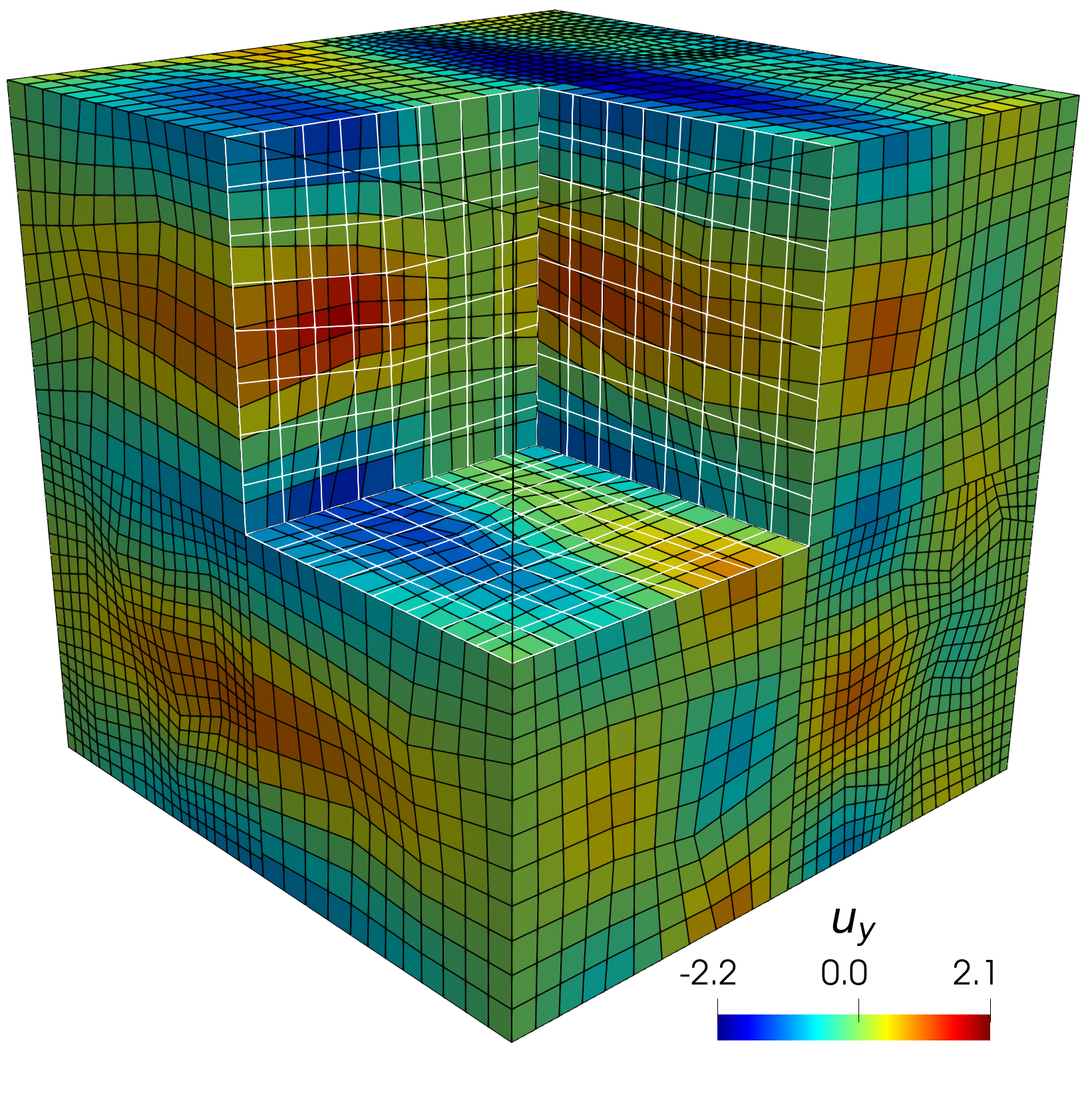}
        \caption{}
    \end{subfigure}
    \caption{Example 1 (3D): numerical solution on the second mesh refinement. Left: pressure on tetrahedral grids. Middle and right: $x$ and $y$ components of the velocity on hexahedral grids. The white lines visualize the mortar grid.}
    \label{fig:conv_3d_results}
\end{figure}

\begin{table}
    {\footnotesize
      \caption{Example 1 (3D): errors and convergence rates on tetrahedral grids.} 
    \label{tab:conv_3d_tets}
    \begin{center}
    \begin{tabular}{l |l l |l l |l l |l l |l}
        \toprule
        $h_{\mathrm{min}}$ & $e_u^\flat$ & $r_u^\flat$ & $e_p^\flat$ & $r_p^\flat$ & $e_\lambda^\flat$ & $r_\lambda^\flat$ & $e_{\mathcal{Q}\lambda}^\flat$ & $r_{\mathcal{Q}\lambda}^\flat$ & $\#it$ \\
        \midrule
        3.43e-01 & 1.62e+01 &      & 1.28e+00 &      & 3.40e+00 &      & 8.24e+00 &      & 67 \\
        1.71e-01 & 8.93e+00 & 0.86 & 6.13e-01 & 1.06 & 1.80e+00 & 0.92 & 4.10e+00 & 1.01 & 89 \\
        8.57e-02 & 4.72e+00 & 0.92 & 3.02e-01 & 1.02 & 5.63e-01 & 1.67 & 1.93e+00 & 1.08 & 105 \\
        4.28e-02 & 2.41e+00 & 0.97 & 1.51e-01 & 1.00 & 2.70e-01 & 1.06 & 9.71e-01 & 1.00 & 125 \\
        \toprule
        $h_{\mathrm{min}}$ & $e_u^\sharp$ & $r_u^\flat$ & $e_p^\sharp$ & $r_p^\sharp$ & $e_\lambda^\sharp$ & $r_\lambda^\sharp$ & $e_{\mathcal{Q}\lambda}^\sharp$ & $r_{\mathcal{Q}\lambda}^\sharp$ & $\#it$ \\
        \midrule
        3.43e-01 & 1.64e+01 &      & 1.28e+00 &      & 5.49e+00 &      & 8.95e+00 &      & 76 \\
        1.71e-01 & 8.94e+00 & 0.88 & 6.13e-01 & 1.06 & 1.85e+00 & 1.57 & 4.13e+00 & 1.12 & 96 \\
        8.57e-02 & 4.73e+00 & 0.92 & 3.02e-01 & 1.02 & 5.77e-01 & 1.68 & 1.94e+00 & 1.09 & 112 \\
        4.28e-02 & 2.41e+00 & 0.97 & 1.51e-01 & 1.00 & 2.74e-01 & 1.07 & 9.72e-01 & 1.00 & 133 \\
        \bottomrule
    \end{tabular}
    \end{center}
    }
\end{table}

\begin{table}
    {\footnotesize
      \caption{Example 1 (3D): errors and convergence rates on hexahedral grids.} 
    \label{tab:conv_3d_hexes}
    \begin{center}
    \begin{tabular}{l |l l |l l |l l |l l |l}
        \toprule
        $h_{\mathrm{min}}$ & $e_u^\flat$ & $r_u^\flat$ & $e_p^\flat$ & $r_p^\flat$ & $e_\lambda^\flat$ & $r_\lambda^\flat$ & $e_{\mathcal{Q}\lambda}^\flat$ & $r_{\mathcal{Q}\lambda}^\flat$ & $\#it$ \\
        \midrule
        3.17e-01 & 2.02e+01 &      & 1.48e+00 &      & 3.91e+00 &      & 8.87e+00 &      & 57 \\
        1.47e-01 & 9.21e+00 & 1.02 & 6.45e-01 & 1.08 & 1.89e+00 & 0.95 & 4.38e+00 & 0.92 & 73 \\
        7.13e-02 & 4.45e+00 & 1.01 & 3.10e-01 & 1.01 & 5.98e-01 & 1.59 & 2.10e+00 & 1.02 & 88 \\
        3.42e-02 & 2.21e+00 & 0.96 & 1.54e-01 & 0.96 & 3.31e-01 & 0.81 & 1.07e+00 & 0.92 & 105 \\
        \toprule
        $h_{\mathrm{min}}$ & $e_u^\sharp$ & $r_u^\flat$ & $e_p^\sharp$ & $r_p^\sharp$ & $e_\lambda^\sharp$ & $r_\lambda^\sharp$ & $e_{\mathcal{Q}\lambda}^\sharp$ & $r_{\mathcal{Q}\lambda}^\sharp$ & $\#it$ \\
        \midrule
        3.17e-01 & 2.02e+01 &      & 1.46e+00 &      & 3.87e+00 &      & 9.12e+00 &      & 58 \\
        1.47e-01 & 9.22e+00 & 1.02 & 6.44e-01 & 1.07 & 1.85e+00 & 0.96 & 4.38e+00 & 0.95 & 74 \\
        7.13e-02 & 4.45e+00 & 1.01 & 3.10e-01 & 1.01 & 5.82e-01 & 1.60 & 2.10e+00 & 1.02 & 88 \\
        3.42e-02 & 2.21e+00 & 0.96 & 1.54e-01 & 0.96 & 3.27e-01 & 0.79 & 1.07e+00 & 0.92 & 106 \\
        \bottomrule
    \end{tabular}
    \end{center}
    }
\end{table}

\subsection{Example 2: Faulted geology}
\label{sub: fault}

This example illustrates the flexibility of the method with respect to the choice
of computational meshes in different parts of the domain. This is particularly
useful in geological applications, which often involve layered
structures of materials with different properties, as well as faults and fractures.

\subsubsection{Two-dimensional setup}

We consider a faulted geology consisting of two permeable layers on top and bottom, separated by a low-permeable barrier and cut through by a high-permeable fault.
Dirichlet boundary conditions are applied on the left and right boundaries in the
permeable layers, while no-flow conditions are imposed on all remaining boundaries.
An illustration of the domain, the computational grid, and the chosen Dirichlet boundary conditions is given in~\Cref{fig:fault_2d_setup_grid_and_bcs}.
We use $K=I$ in the top and bottom layers, while $K=\num{e3}I$ and $K=\num{e-4}I$
are used in the fault zone and the barrier layers, respectively. 
The subdomain mesh sizes are chosen depending on the permeability such that
elhighly-permeable regions are discretized with finer meshes, yielding a
total number of cells of \num{24262}.

The computed solution is shown in~\Cref{fig:fault_2d_setup_solution}. For comparison, a monolithic reference solution is shown in~\Cref{fig:fault_2d_setup_solution_conforming}, which is computed on a fine conforming mesh with \num{66020} cells given in \Cref{fig:fault_2d_setup_grid_conforming}. Despite the difference in the number of degrees of freedom, a very good agreement in both pressure $p$ and the velocity $\bm{u}$ can be observed. Furthermore, plots of the pressure along the diagonal of the domain and the flux along the interface highlighted in~\Cref{fig:fault_2d_setup_grid_and_bcs} are shown in~\Cref{fig:fault_plots_2d} for both the flux-mortar and the fine scale solutions. Again, even though the flux-mortar solution uses coarser subdomain grids that do not match along the interfaces, as well as coarse mortar grids, it matches very well with the fine scale solution in both the pressure and interface flux.

\begin{figure}
    \centering
    \begin{subfigure}{0.49\textwidth}
        \centering
        \includegraphics[width=0.78\textwidth]{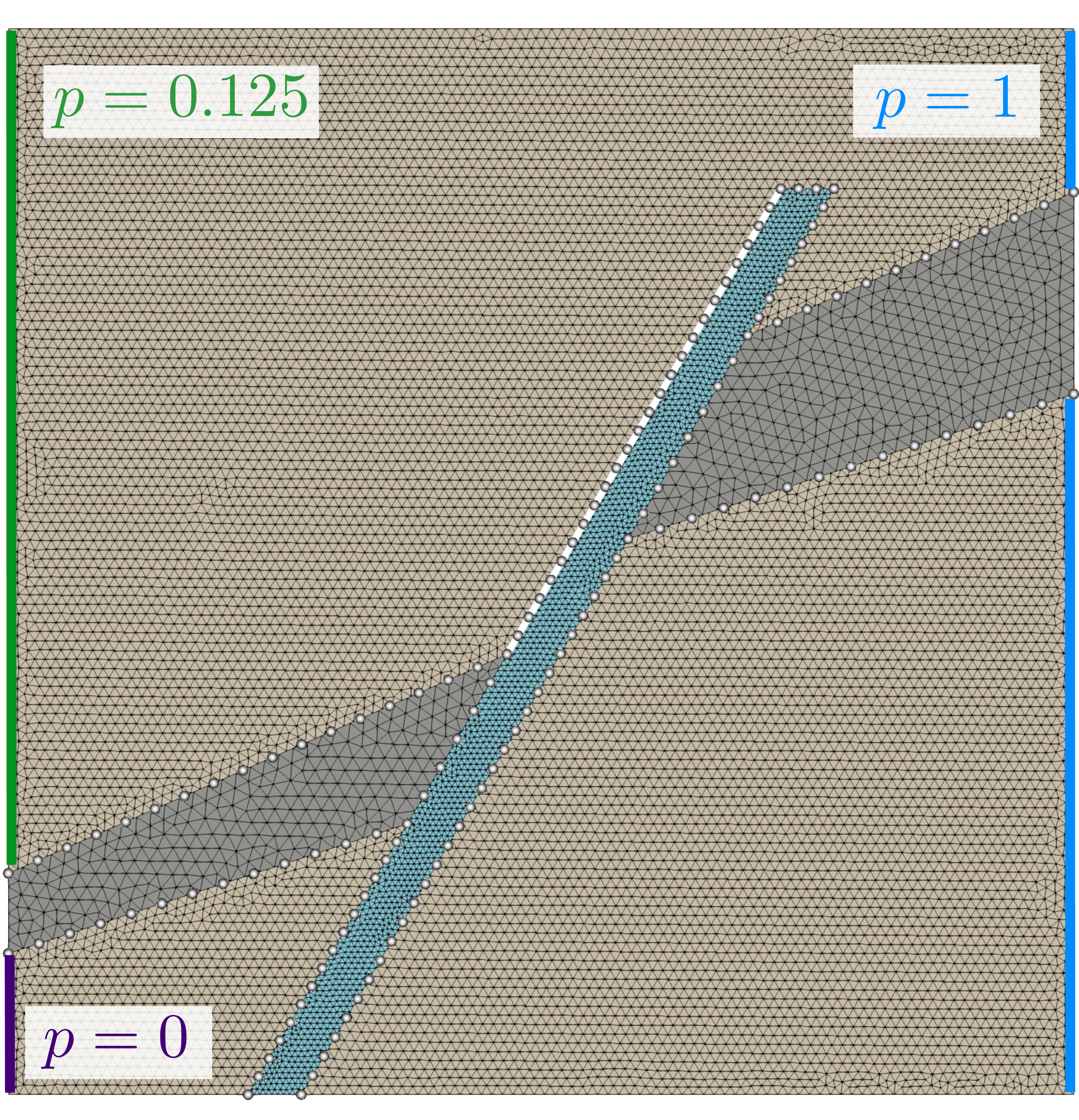}
        \caption{}
        \label{fig:fault_2d_setup_grid_and_bcs}
    \end{subfigure}
     \begin{subfigure}{0.49\textwidth}
        \centering
        \includegraphics[width=0.8\textwidth]{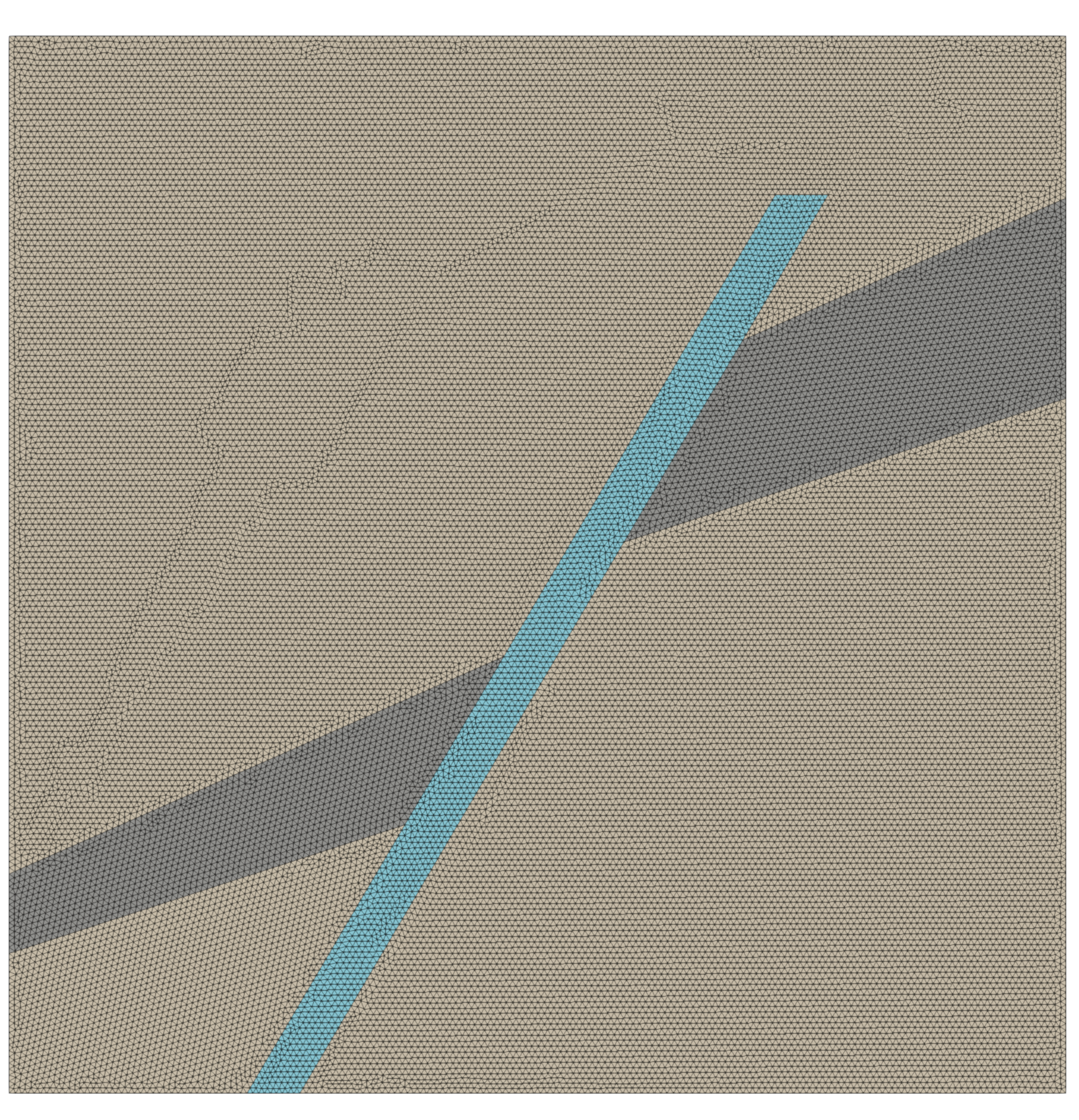}
        \caption{}
        \label{fig:fault_2d_setup_grid_conforming}
    \end{subfigure}
    \begin{subfigure}{0.45\textwidth}
        \centering
        \includegraphics[width=\textwidth]{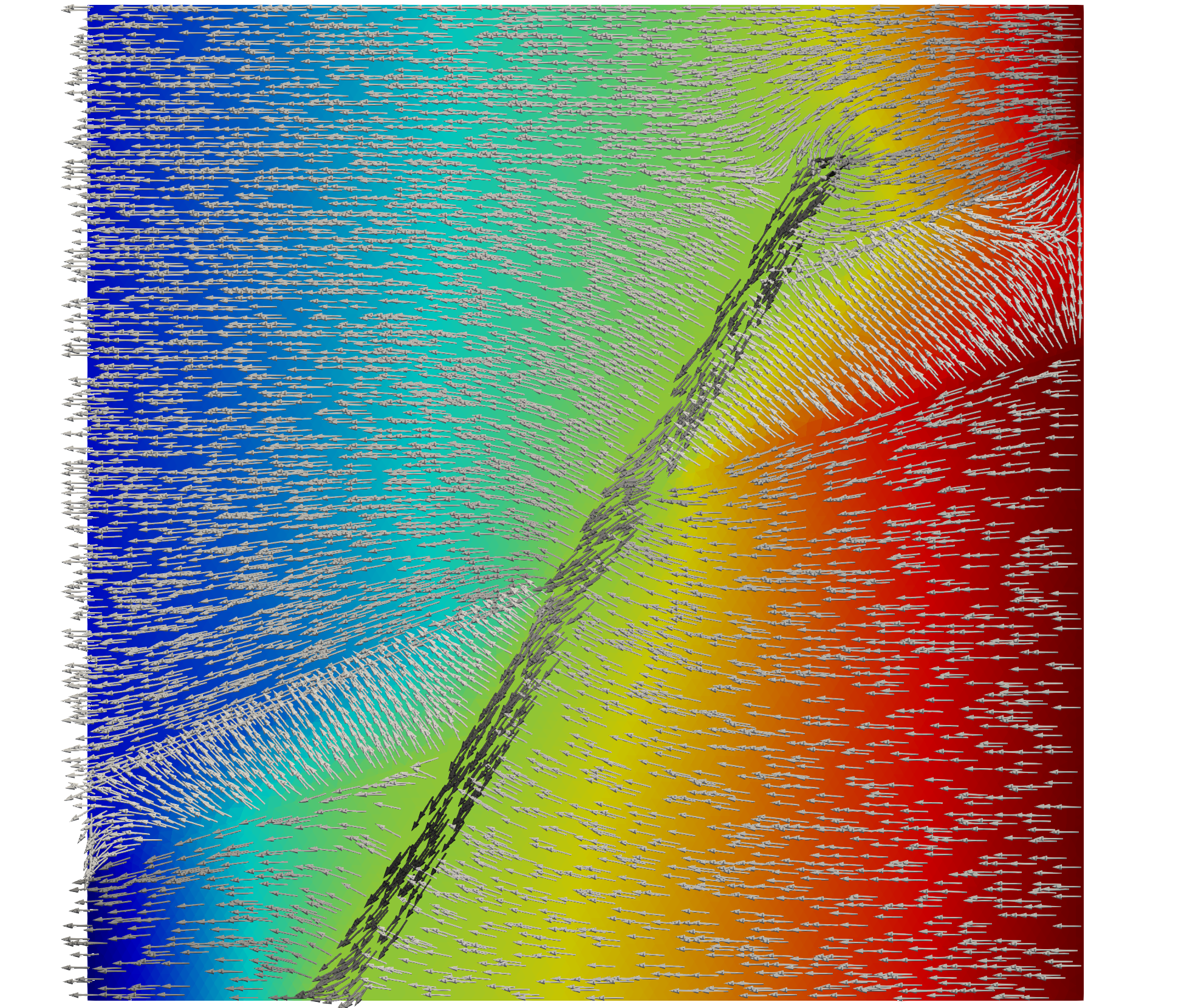}
        \caption{}
        \label{fig:fault_2d_setup_solution}
    \end{subfigure}
    \begin{subfigure}{0.45\textwidth}
        \centering
        \includegraphics[width=\textwidth]{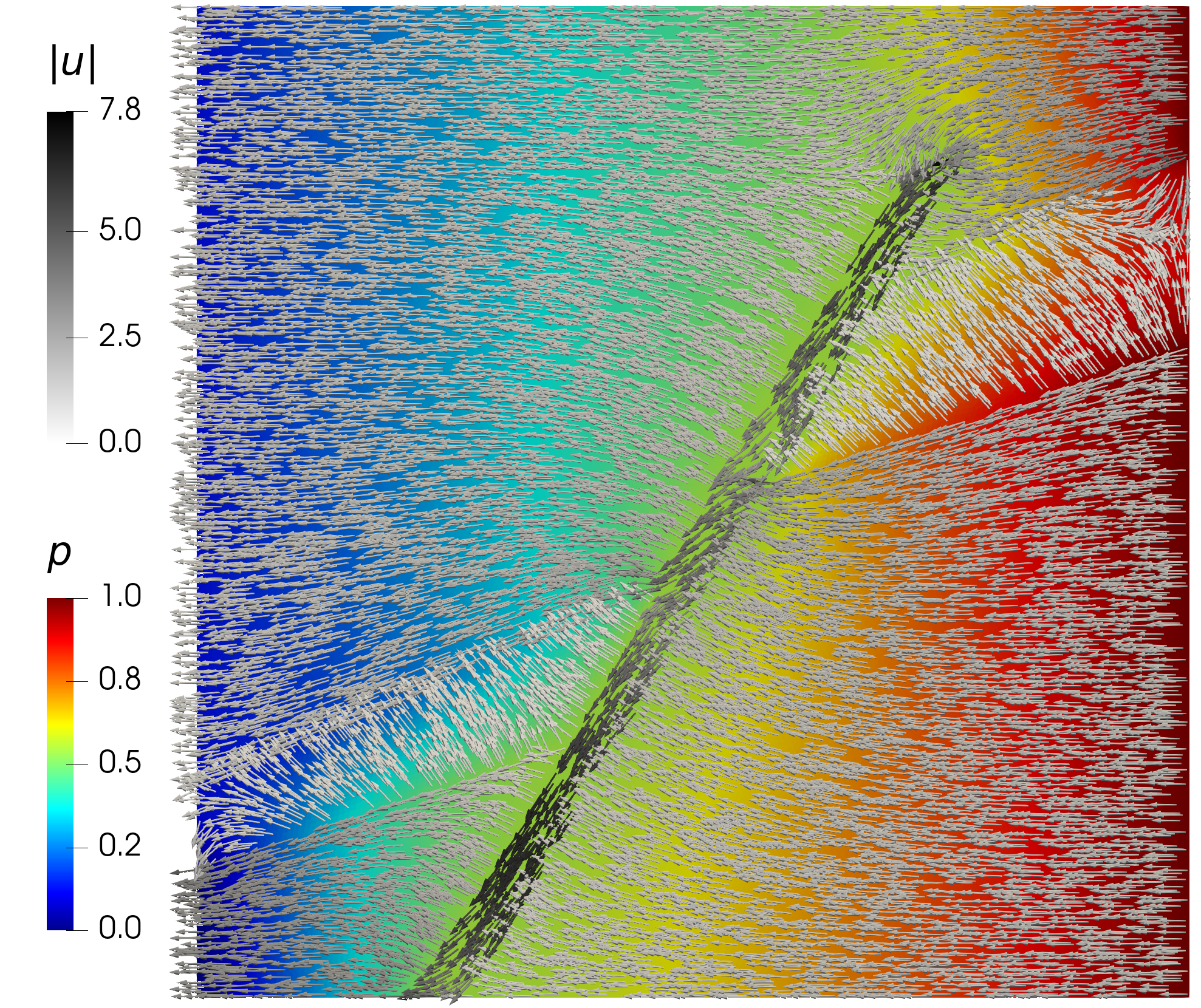}
        \caption{}
        \label{fig:fault_2d_setup_solution_conforming}
    \end{subfigure}
    \caption{Example~2 (2D): (a) Domain, grid and boundary conditions. The grey circles depict the vertices of the mortar mesh and the white line illustrates the interface across which the fluxes are plotted in~\Cref{fig:fault_plots_2d}. (b) The fine conforming mesh for comparison of the results. (c) and (d) Pressure and velocity distributions obtained with the flux-mortar method and on the fine mesh, respectively.}
    \label{fig:fault_2d_setup}
\end{figure}

\begin{figure}[h]
    \centering
    \begin{subfigure}{0.35\textwidth}
        \includegraphics[width=\textwidth]{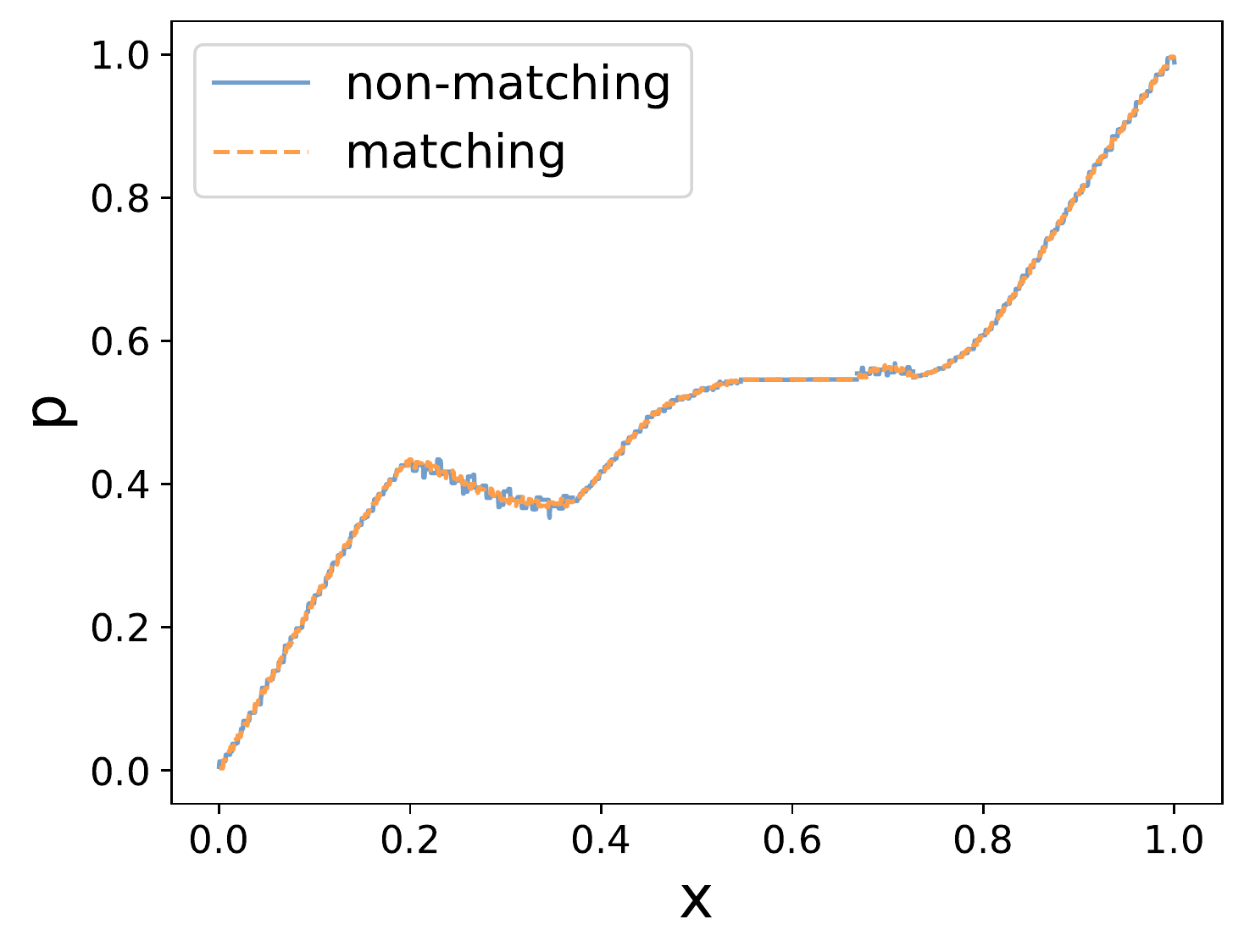}
        \caption{}
    \end{subfigure}
    \begin{subfigure}{0.35\textwidth}
        \includegraphics[width=\textwidth]{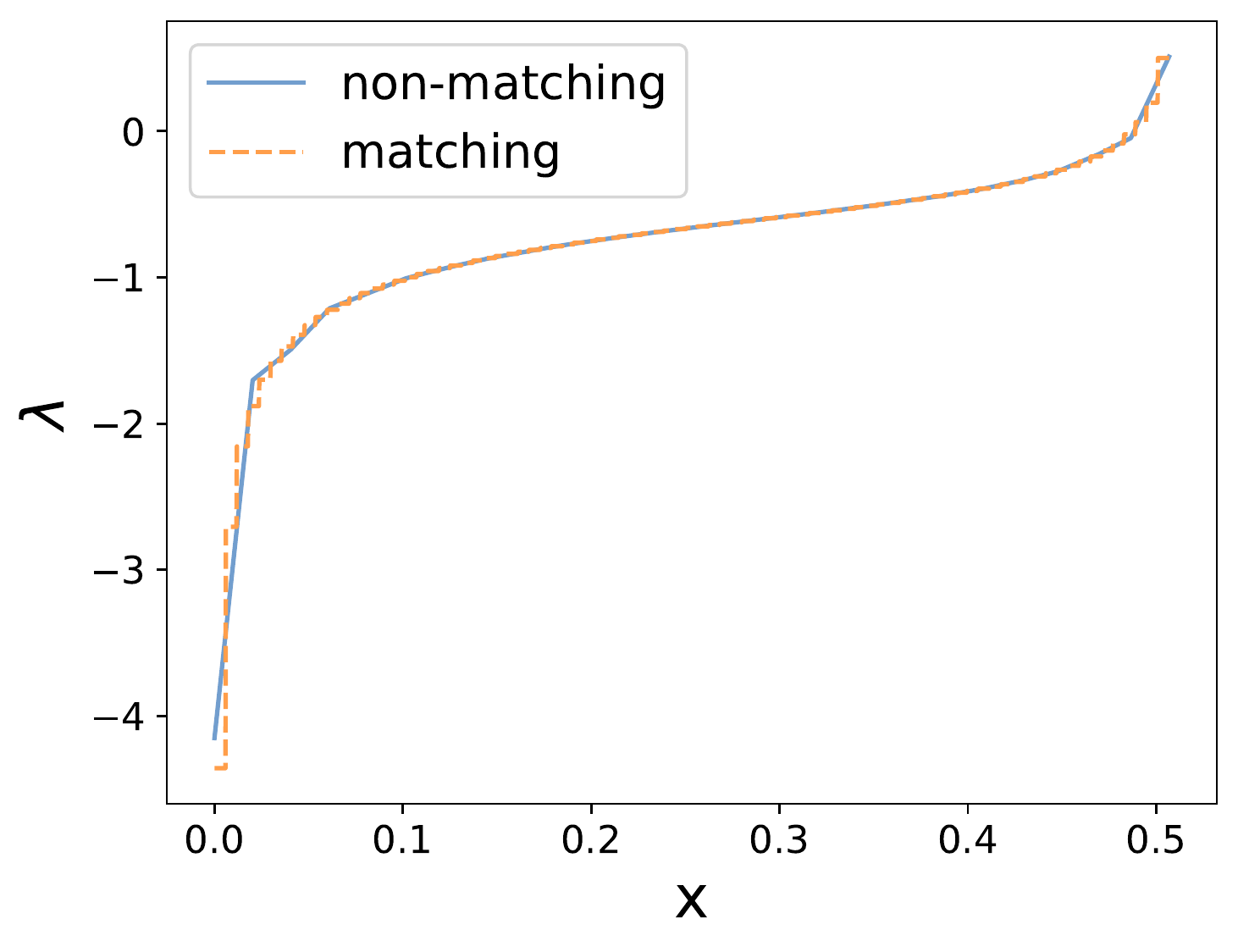}
        \caption{}
    \end{subfigure}
    \caption{Example 2 (2D): (a) Pressure along the diagonal of the domain; (b) Interface flux along the white line depicted in~\Cref{fig:fault_2d_setup_grid_and_bcs}.}
    \label{fig:fault_plots_2d}
\end{figure}

\subsubsection{Three-dimensional setup}
A three-dimensional variant of the setup shown in~\Cref{fig:fault_2d_setup_grid_and_bcs}
is obtained by extrusion of the domain in the third dimension, and we modify the 
boundary conditions to incorporate a pressure drop of $\Delta p = 0.25$ in the 
y-direction. The computational grid consists of \num{86266} cells in total, and we again compare the results against a monolithic reference solution obtained on a conforming grid with \num{133626} cells. \Cref{fig:fault_results_3d} provides an illustration of the domain, the grids and the results. As in the 2D case, we observe very good agreement between the flux-mortar and fine scale solutions. 
\Cref{fig:fault_plots_3d} shows plots of the pressure along the diagonal of the domain and the interface flux along the purple line shown in~\Cref{fig:fault_results_3d_pressure}. It can be observed that the pressure agrees very well with the reference solution inside the
fault and in the permeable layers, while larger deviations seem to occur inside the
barrier layers. However, the good match outside the barrier layers indicates that this is likely a post-processing artifact, resulting from plotting a piecewise-constant function on the intersection of the one-dimensional diagonal with the three-dimensional grid, which is rather coarse in the barrier layers. On the other hand, the match in the interface flux is very good, despite the coarse mortar meshes used in the flux-mortar method.

\begin{figure}
    \centering
    \begin{subfigure}{0.4\textwidth}
        \centering
        \includegraphics[width=0.9\textwidth]{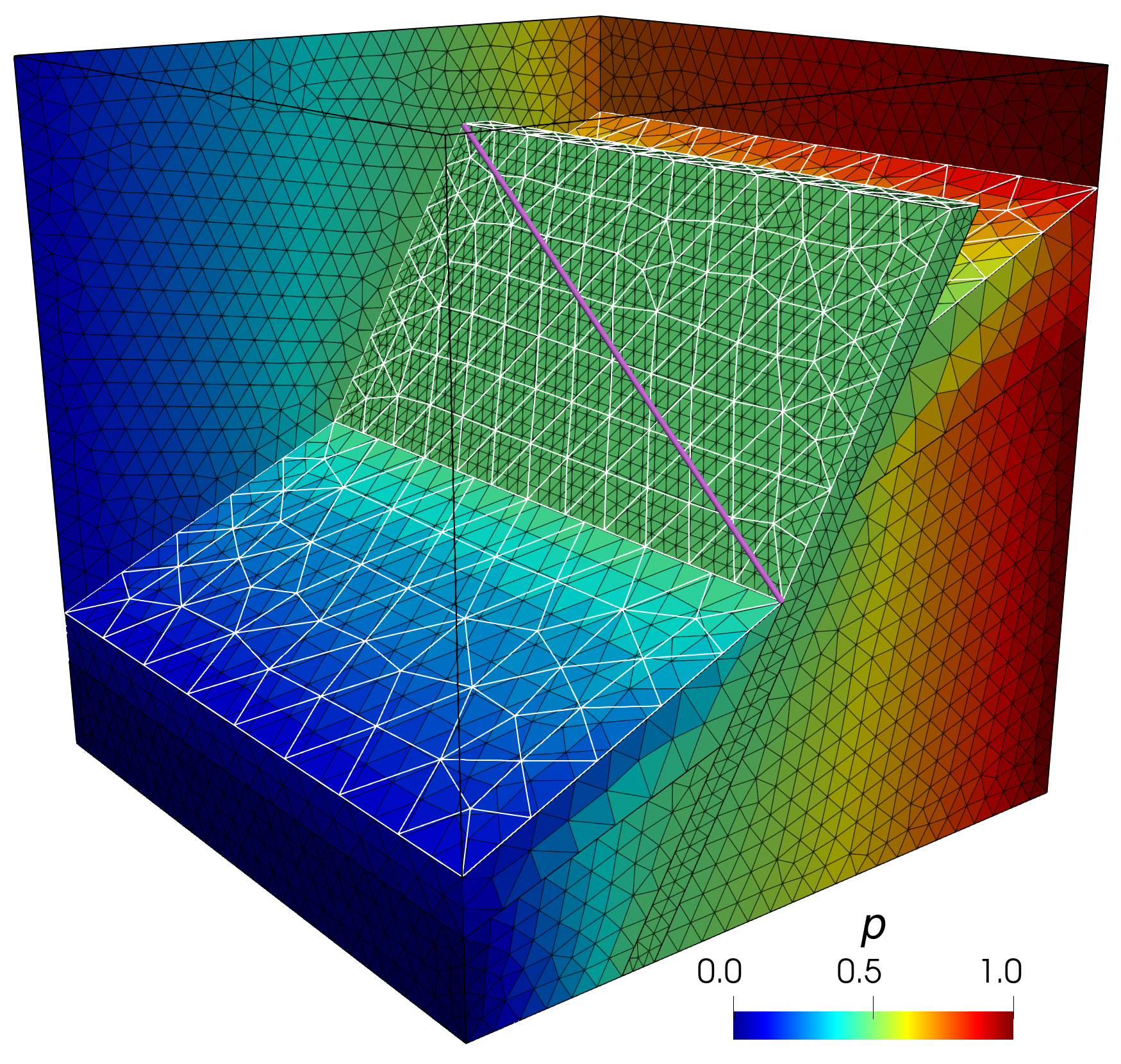}
        \caption{}
        \label{fig:fault_results_3d_pressure}
    \end{subfigure}
    \begin{subfigure}{0.4\textwidth}
        \centering
        \includegraphics[width=0.9\textwidth]{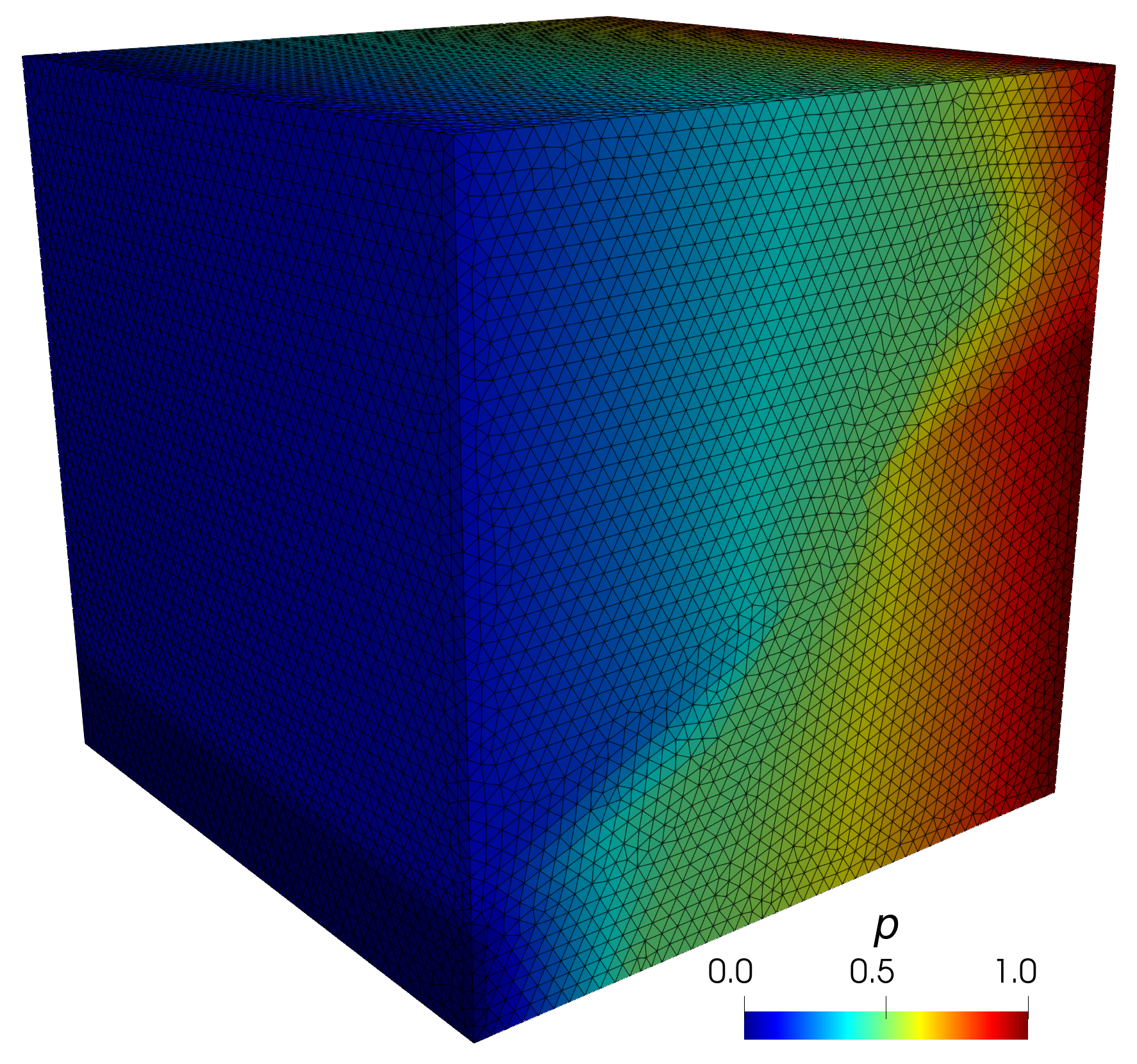}
        \caption{}
    \end{subfigure}
    \begin{subfigure}{0.4\textwidth}
        \centering
        \includegraphics[width=0.9\textwidth]{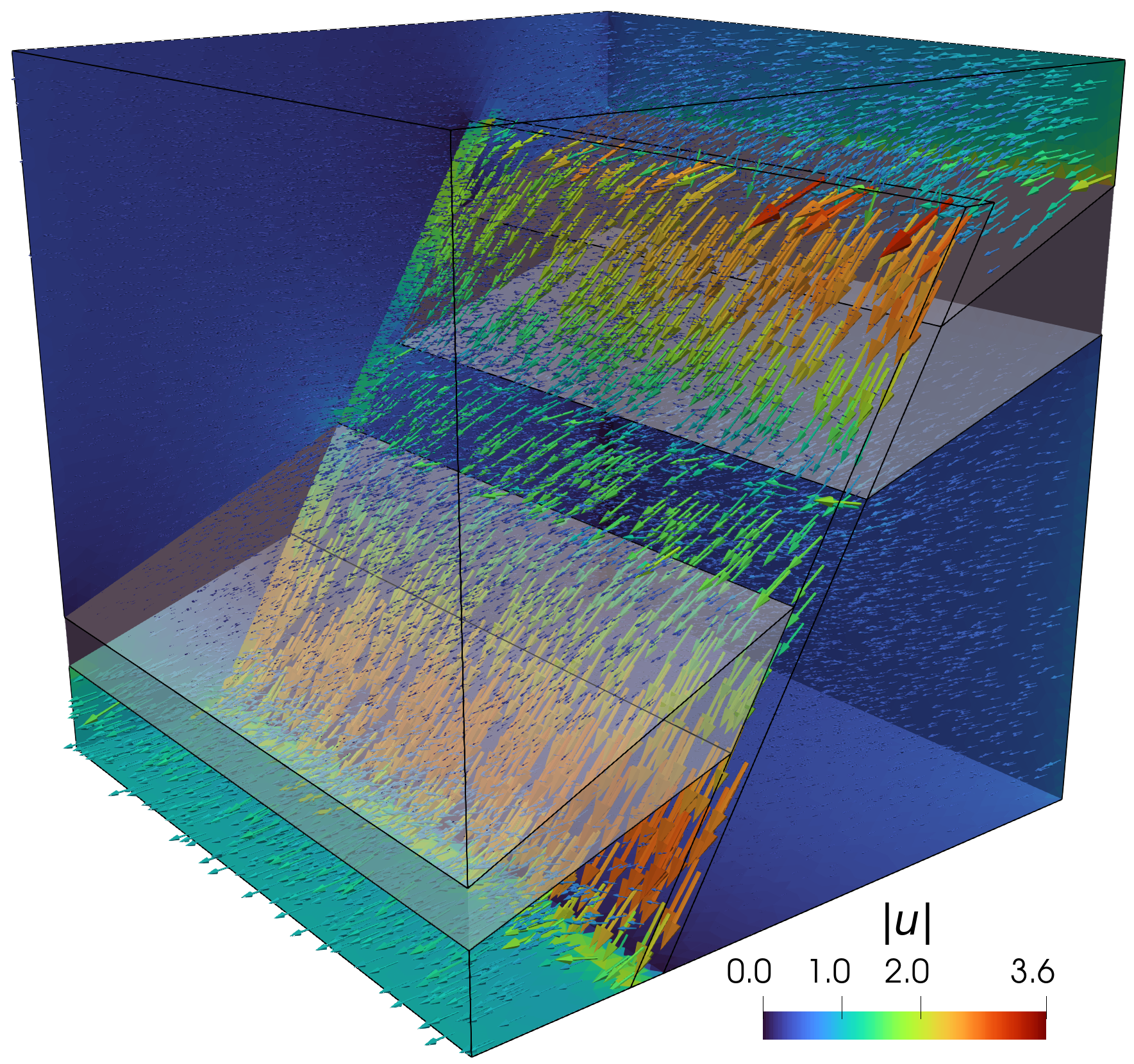}
        \caption{}
        \label{fig:fault_results_3d_velocity}
    \end{subfigure}
    \begin{subfigure}{0.4\textwidth}
        \centering
        \includegraphics[width=0.9\textwidth]{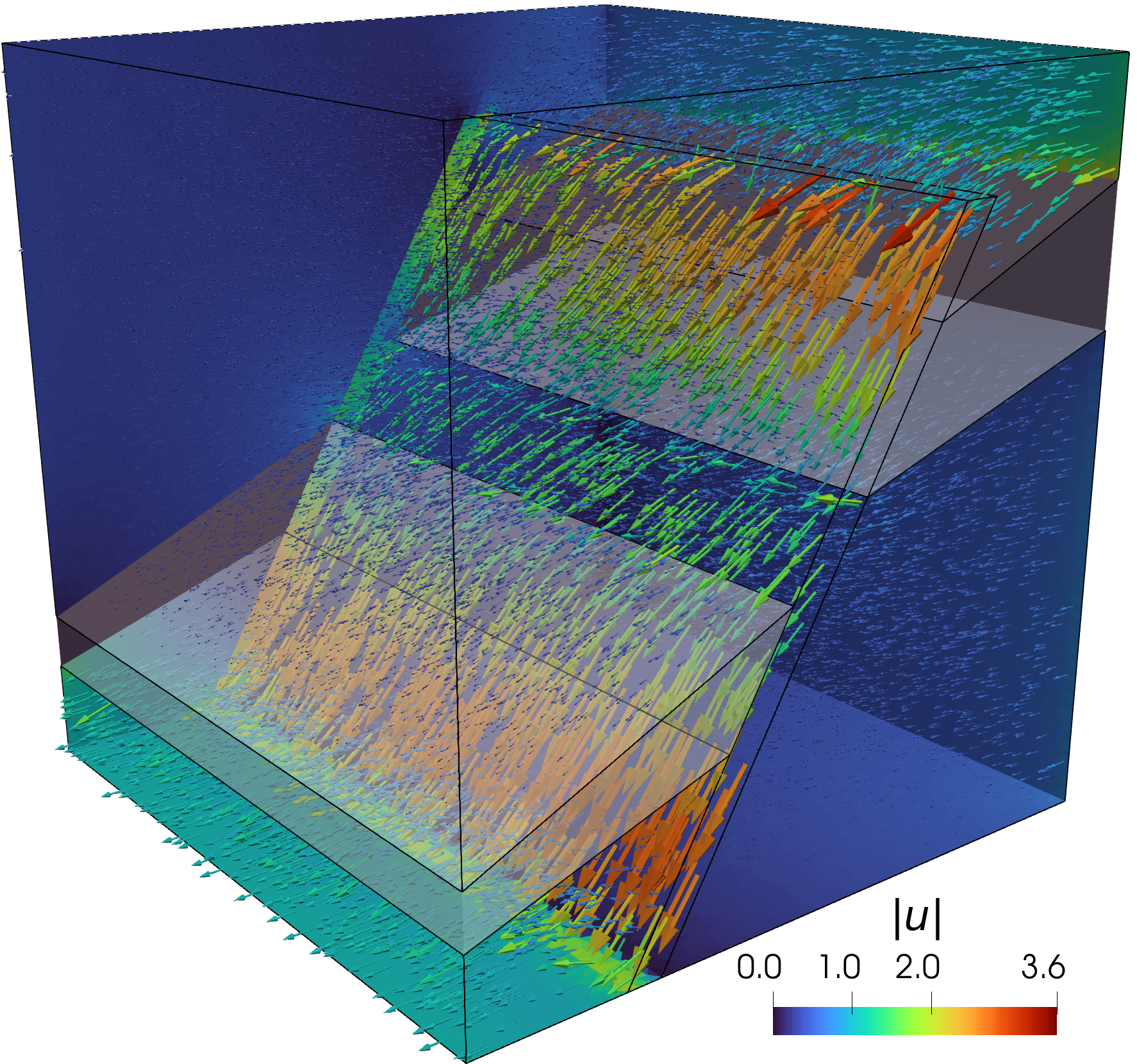}
        \caption{}
    \end{subfigure}
    \caption{Example~2 (3D): computed pressure (top) and velocity (bottom).
      Left column: flux-mortar solution. Right column: fine scale reference solution. The line depicted in \Cref{fig:fault_results_3d_pressure} illustrates the segment along which the plots of~\Cref{fig:fault_plots_3d} were created.}
    \label{fig:fault_results_3d}
\end{figure}

\begin{figure}
    \centering
    \begin{subfigure}{0.4\textwidth}
        \includegraphics[width=\textwidth]{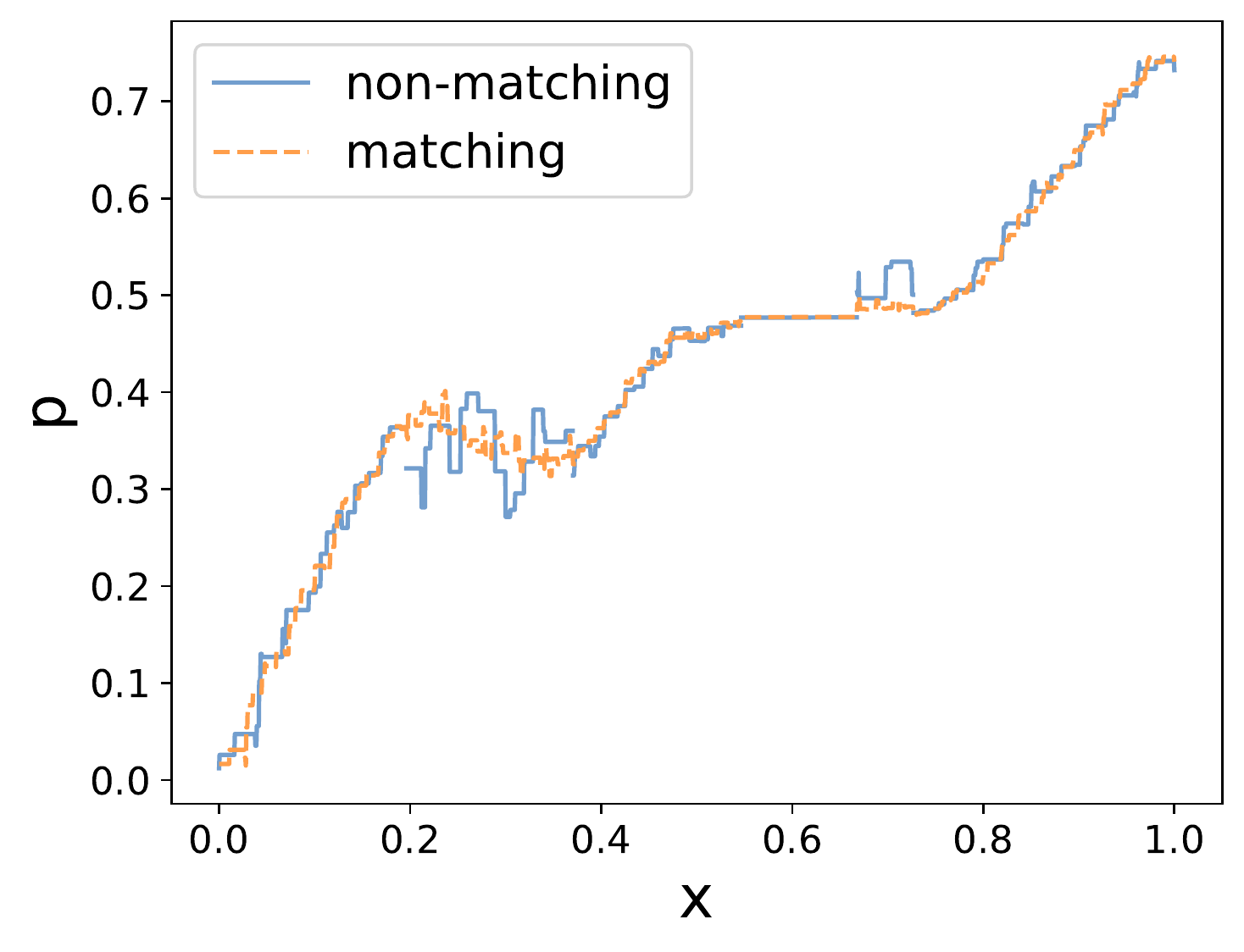}
        \caption{}
    \end{subfigure}
    \begin{subfigure}{0.4\textwidth}
        \includegraphics[width=\textwidth]{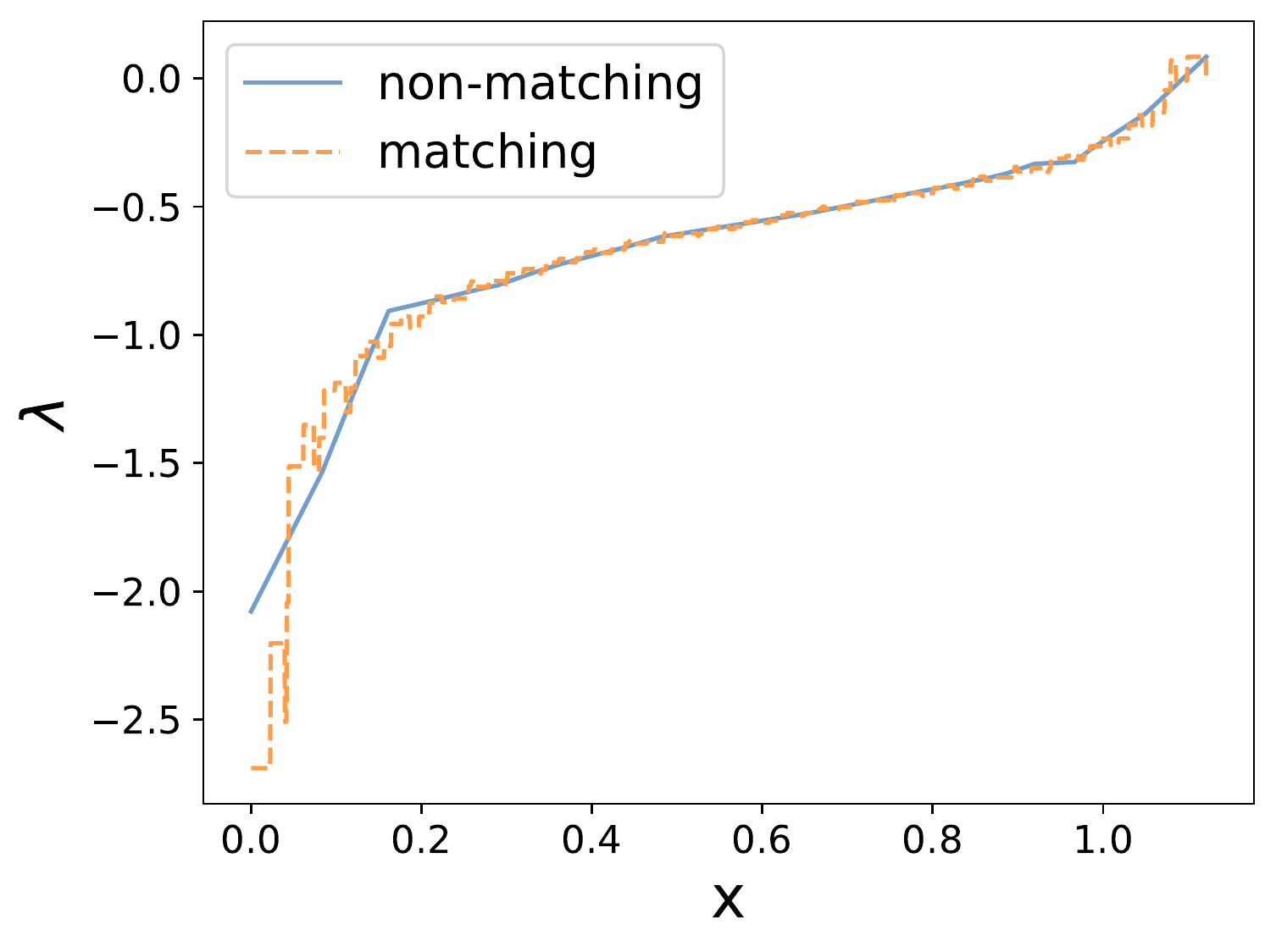}
        \caption{}
    \end{subfigure}
    \caption{Example~2 (3D): (a) Pressure along the diagonal of the domain; (b) Interface flux along the line depicted in~\Cref{fig:fault_results_3d_pressure}.}
    \label{fig:fault_plots_3d}
\end{figure}

\subsection{Example 3: Based on benchmark SPE10}
\label{sub:SPE10}

Following \cite[Example 2]{GanYot}, we consider a permeability field from the second data set of the Society of Petroleum Engineers (SPE) Comparative Solution Project SPE10
(see \href{https://www.spe.org/web/csp/datasets/set02.htm#download}{spe.org/csp/}).
The data set describes a two-dimensional permeability field that varies six orders of magnitude on a domain consisting of $60 \times 220$ cells, depicted in~\Cref{fig:spe_perm_and_results}. The goal of this example is to illustrate the multiscale capability of the flux-mortar method for highly heterogeneous porous media. To this end, we decompose the domain into $3 \times 5$ subdomains, which yields subdomain grids with $20 \times 44$ cells. We consider a coarse scale piecewise-linear mortar space with $10$ cells per interface. We impose a unit pressure drop from right to left with no-flow on the top and bottom boundaries.
As in the previous test case, we compare the results obtained from the flux-mortar method to a conforming fine scale solution. The pressure and velocity distributions obtained from the two methods are in very good agreement, as shown in~\Cref{fig:spe_perm_and_results}.

\begin{figure}[h]
    \centering
    \includegraphics[width=0.875\textwidth]{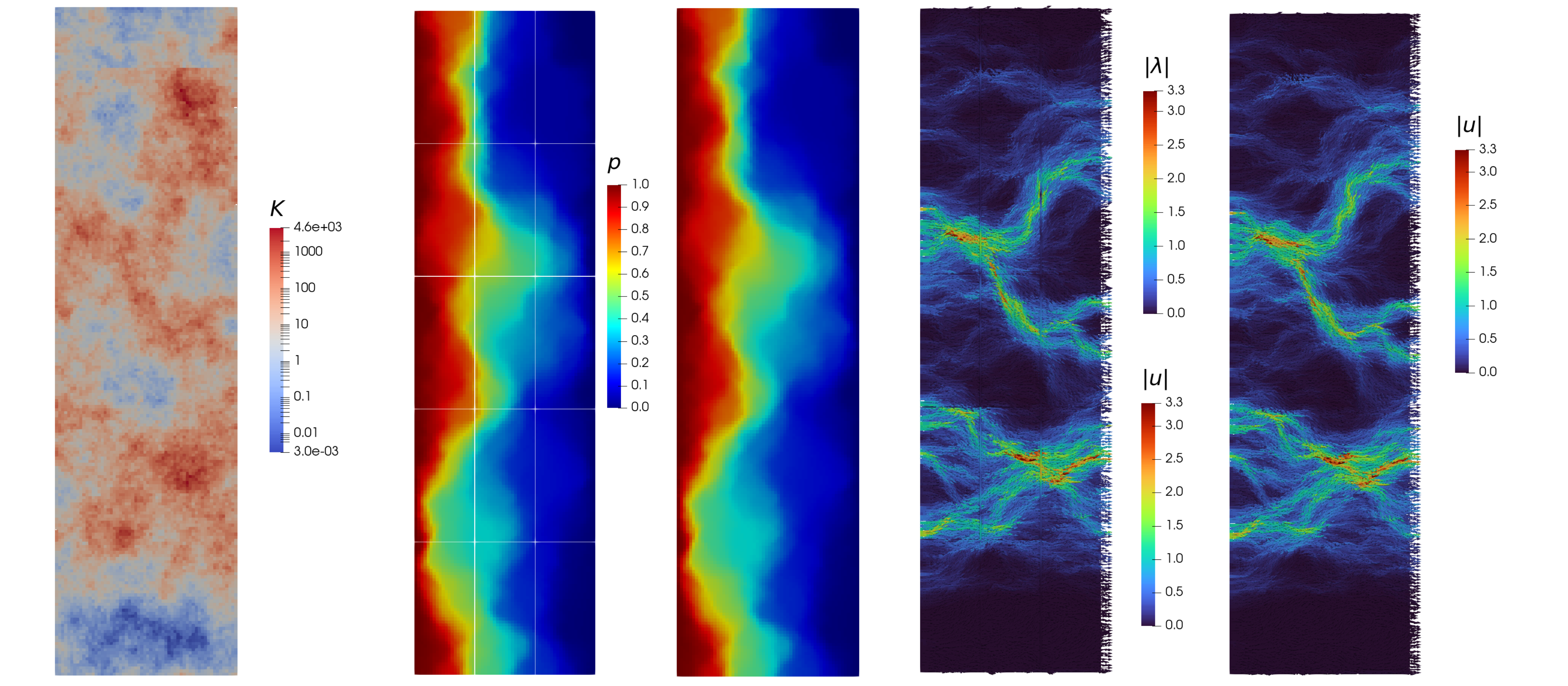}
    \caption{Example 3: permeability (left), pressure (center) and velocity (right). Pressure and velocity are depicted for both the flux-mortar solution (left of the legend) and a fine scale solution (right of the legend). In the flux-mortar solution, the mortar interface is visualized by white lines, while the absolute value of the mortar flux is depicted in the visualization of the velocity as tubes.}
    \label{fig:spe_perm_and_results}
\end{figure}

\subsection{Example 4: Locally adapted grids} 
\label{sub:highly_oscillatory_permeability}

In this example we choose the subdomain mesh size according to
the local spatial frequency of the permeability field to illustrate the method's flexibility of refining the grid locally where needed.
We again consider a permeability field from the second data set of the SPE10 benchmark. For $K_{xx}$ the permeability given in the dataset, we define the permeability
tensor $\bm{K} = \bm{R}^{-1} \left( \begin{matrix} K_{xx} & 0 \\ 0 &  0.2 K_{xx}
\end{matrix} \right) \bm{R}$, with $\bm{R}$ being the two-dimensional rotation matrix in clockwise direction around an angle of \SI{20}{\degree}. This results in a full-tensor permeability, which is correctly handled by the MFMFE discretization.
A square region with $60 \times 60$ cells of the original permeability data set is chosen as the domain of interest, which is further decomposed into $4 \times 4$ subdomains. Depending on the spatial frequency of the permeability, the subdomain
grids are refined between $0$ and $3$ times. This procedure results in $5$ of the
$25$ subdomains being refined (see~\Cref{fig:oscillatory_perm_and_grid}) and a total
of \num{36675} cells. Each interface is discretized with $10$ cells and piecewise-linear mortars. For comparison, a monolithic solution is computed on a conforming mesh with a cell size corresponding to that of the third level, yielding a mesh with \num{230400} cells.

A unit pressure drop is applied to the left and right domain boundaries, while a no-flow condition is imposed on the top and bottom boundaries. \Cref{fig:oscillatory_results} depicts the pressure and velocity distributions
obtained from the flux-mortar method and on the conforming fine grid. The two solutions are in very good agreement despite the large differences in the number of cells of the discretizations. In particular, the flux-mortar method captures very well the high velocities occurring in the highly-permeable channels near the upper right corner. We do observe that in the lower left corner of the domain the locally coarser grid used in the flux-mortar method does not fully capture the velocity field. However, the flexibility of the method would allow for a further subdivision of the lower-left block, using a finer mesh where the highly-permeable channel is located. The generation of locally adapted subdomain and mortar grids could be automated with the use of \emph{a posteriori} error estimates, which is a topic of future research.

\begin{figure}
    \centering
    \begin{subfigure}{0.33\textwidth}
        \includegraphics[width=\textwidth]{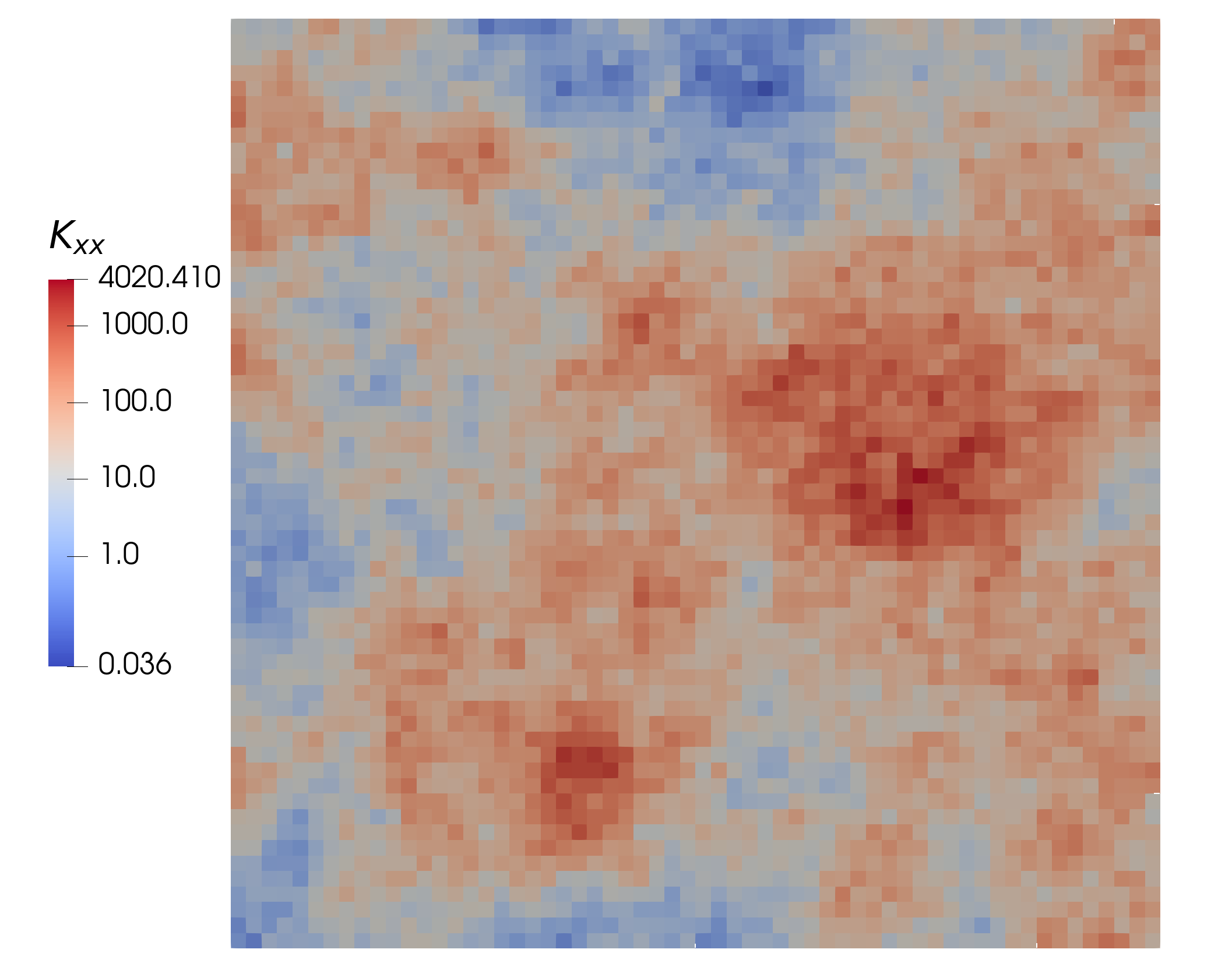}
        \caption{}
    \end{subfigure}
    \begin{subfigure}{0.33\textwidth}
        \includegraphics[width=\textwidth]{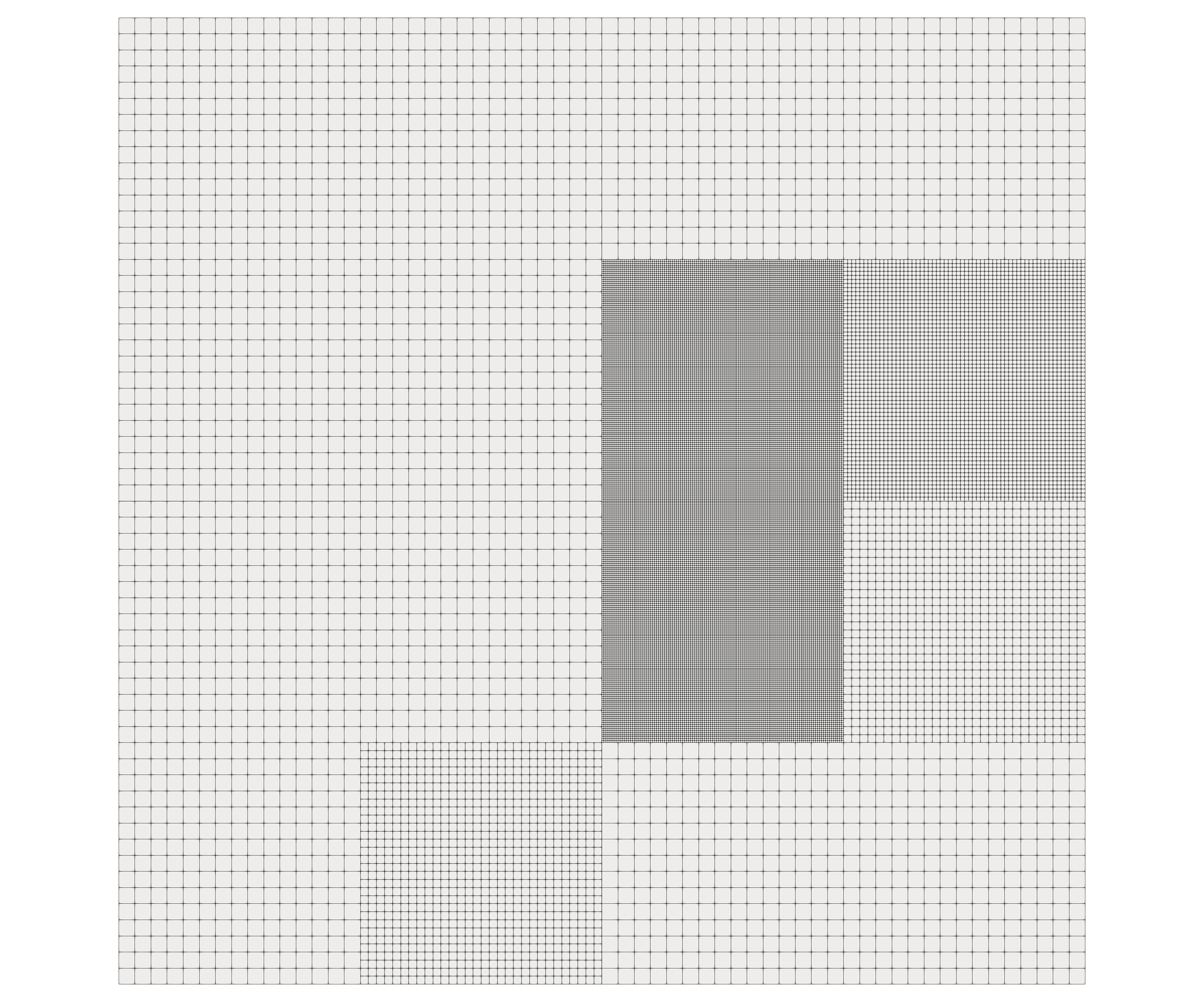}
        \caption{}
    \end{subfigure}
    \caption{Example 4: permeability distribution (left) and locally refined grids (right).}
    \label{fig:oscillatory_perm_and_grid}
\end{figure}

\begin{figure}
    \centering
    \begin{subfigure}{0.49\textwidth}
        \centering
        \includegraphics[width=0.825\textwidth]{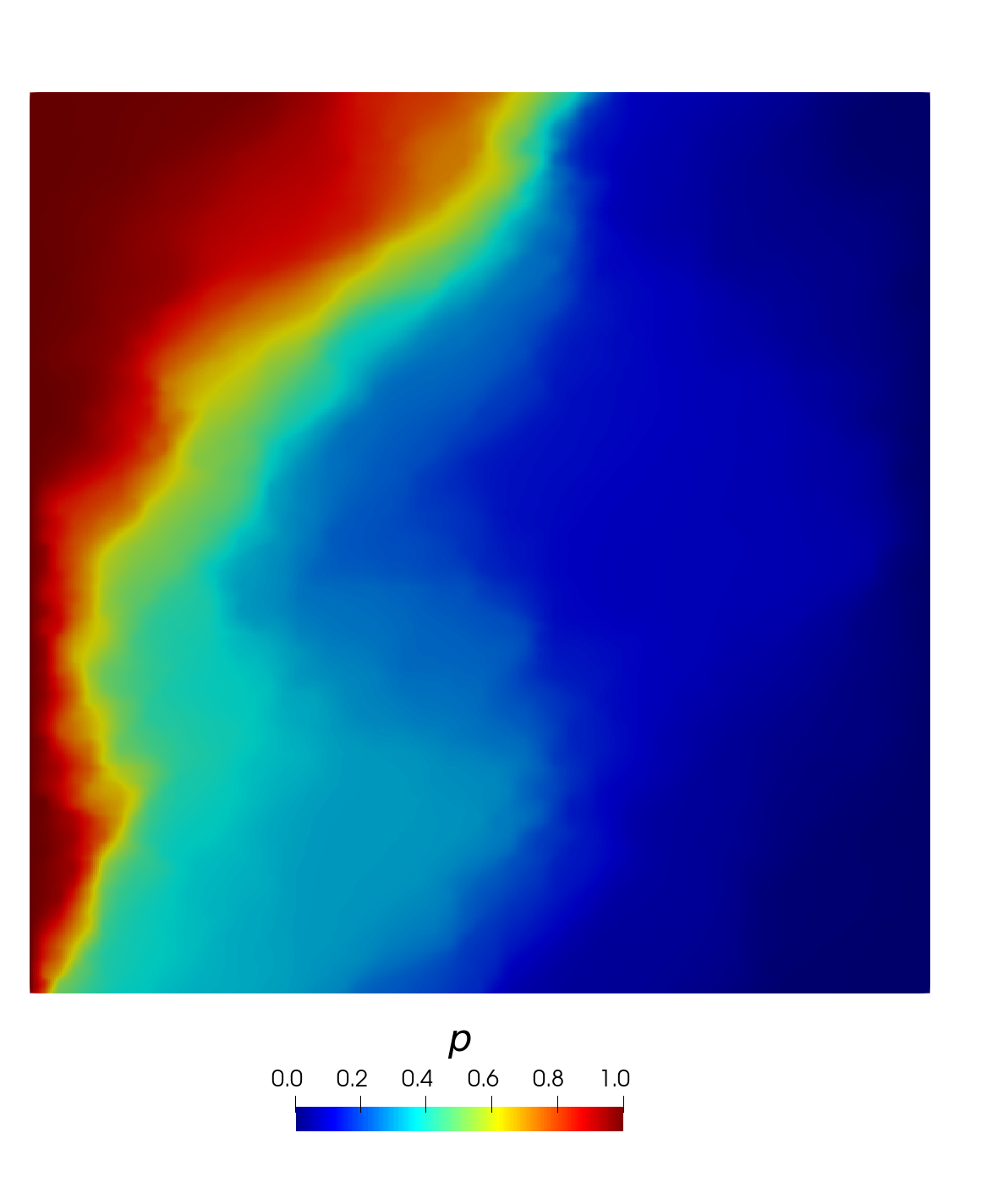}
        \caption{}
    \end{subfigure}
    \begin{subfigure}{0.49\textwidth}
        \centering
        \includegraphics[width=0.825\textwidth]{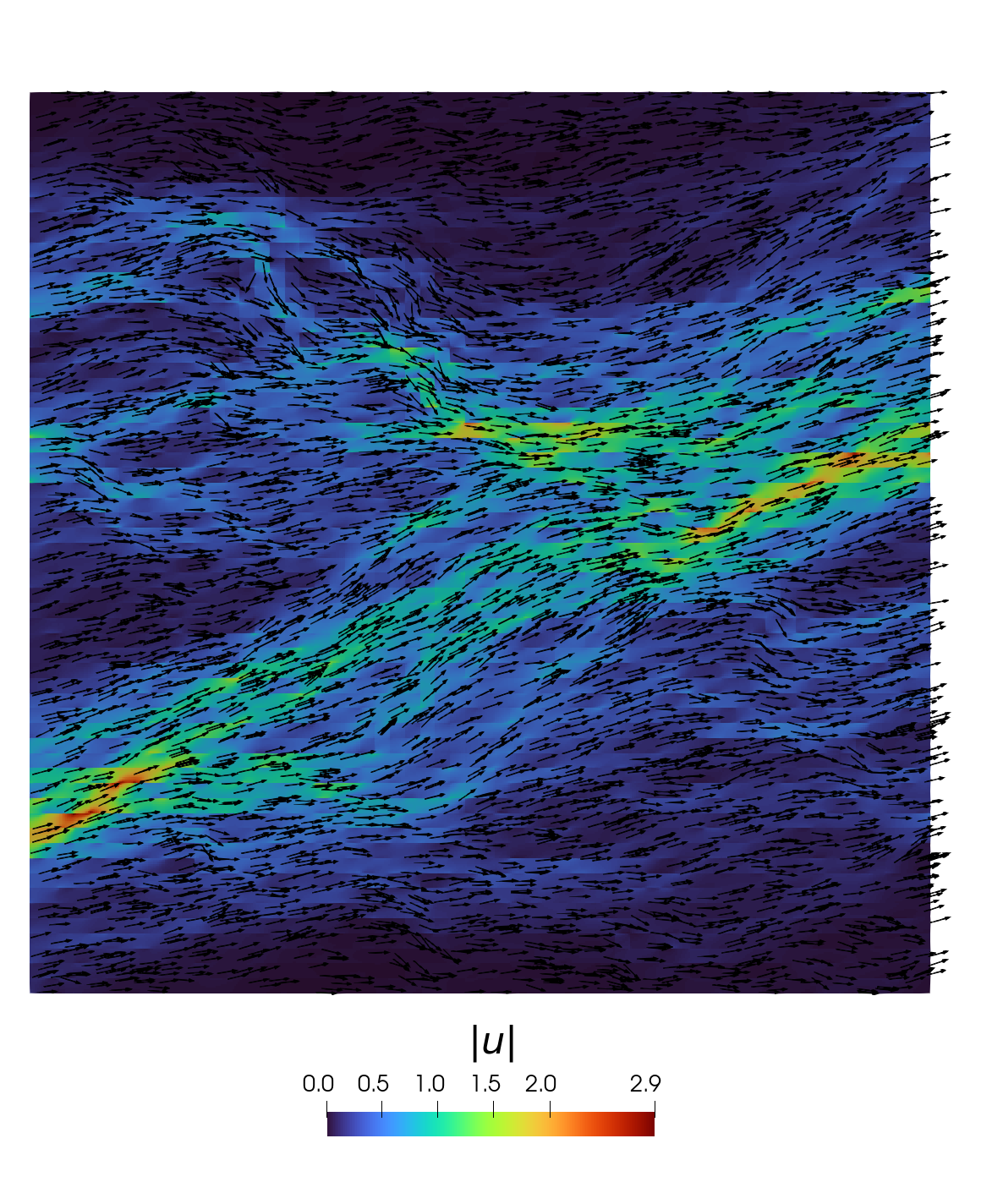}
        \caption{}
    \end{subfigure}
    \begin{subfigure}{0.49\textwidth}
        \centering
        \includegraphics[width=0.8\textwidth]{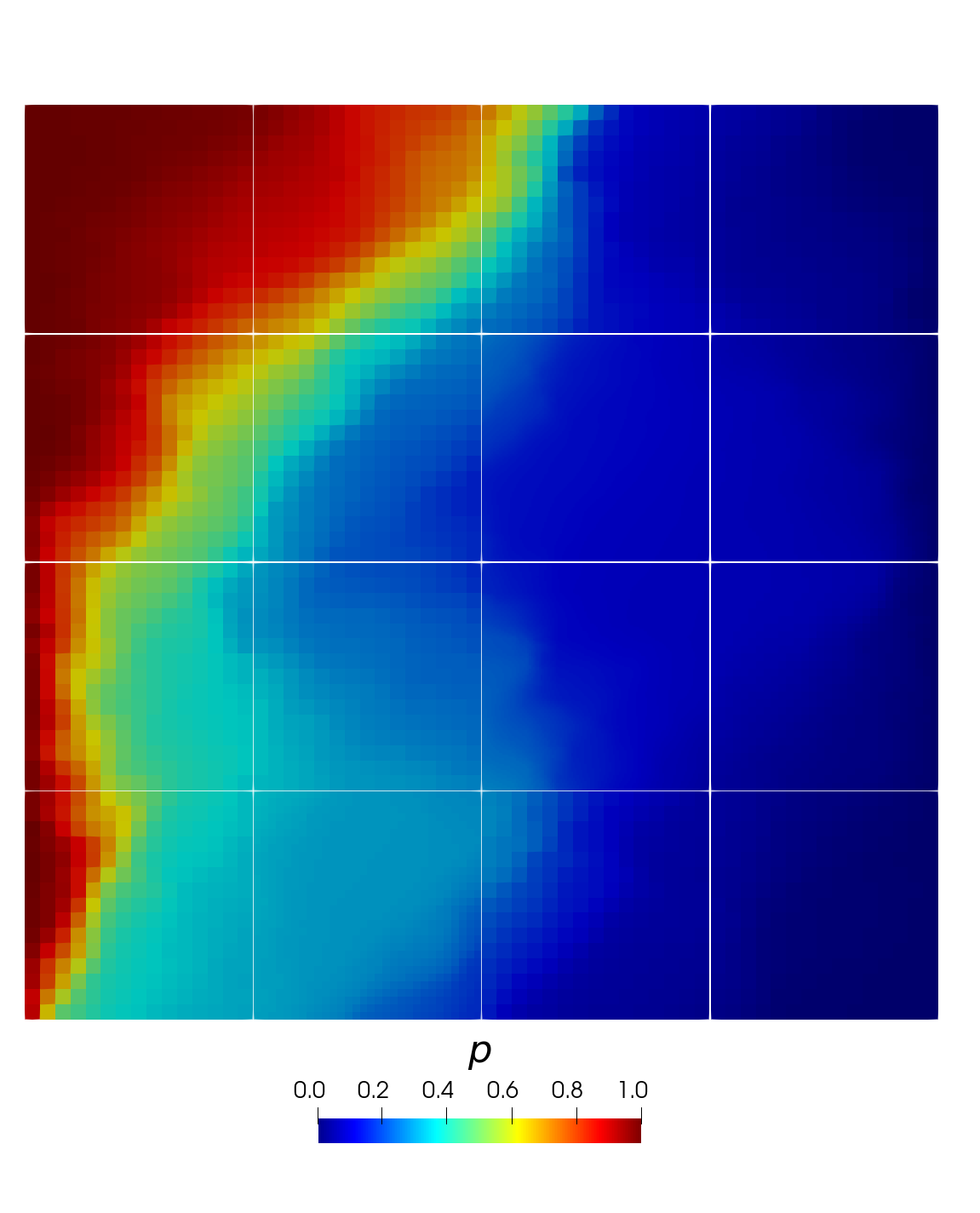}
        \caption{}
    \end{subfigure}
    \begin{subfigure}{0.49\textwidth}
        \centering
        \includegraphics[width=0.8\textwidth]{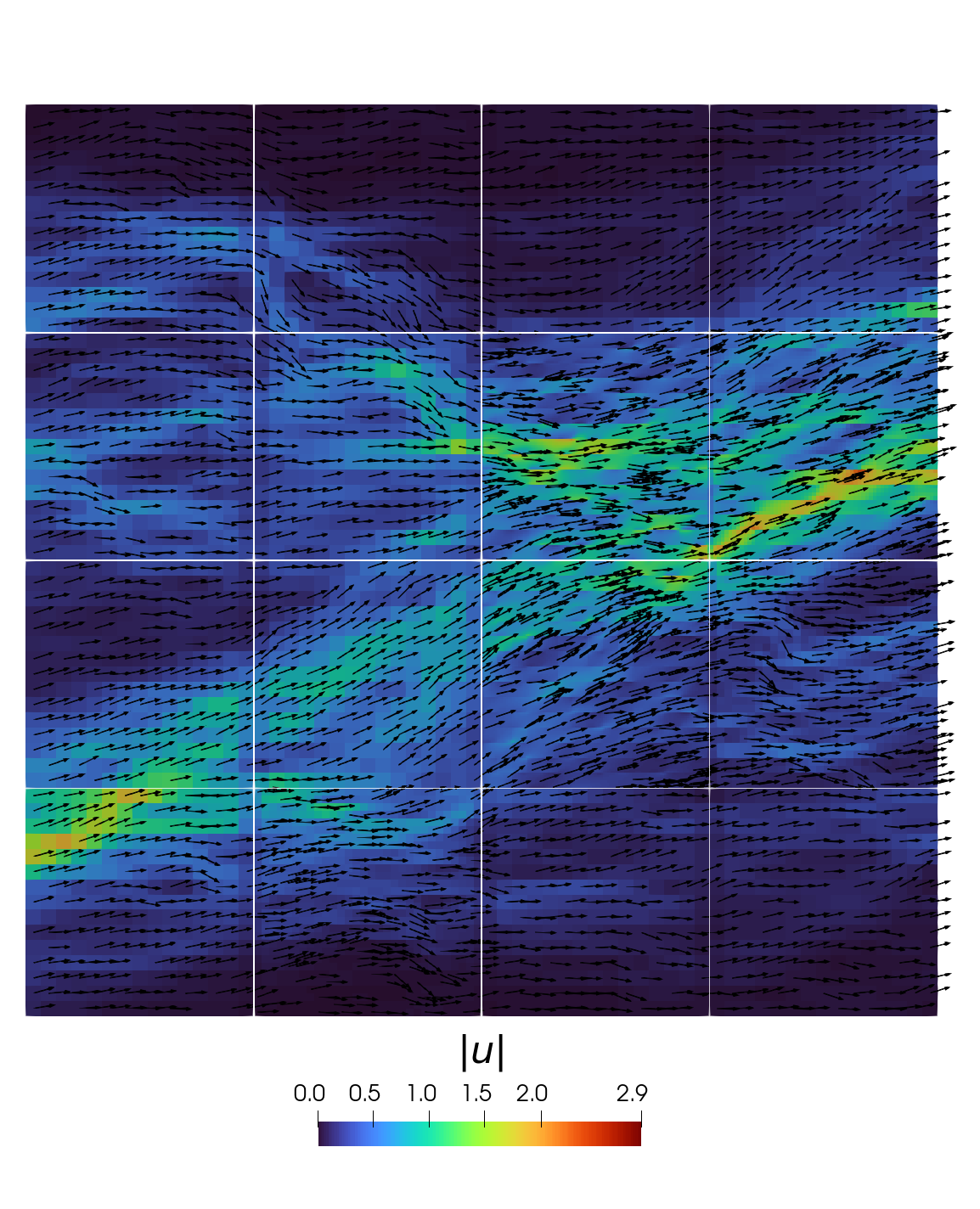}
        \caption{}
    \end{subfigure}
    \caption{Example 4: computed pressure (left column) and velocity (right column). The top row depicts the solution from a monolithic solve on a fine grid with $480 \times 480$ cells, while the bottom row shows the flux-mortar solution with $4 \times 4$ subdomains and locally refined grids (see~\Cref{fig:oscillatory_perm_and_grid}).}
    \label{fig:oscillatory_results}
\end{figure}

\fi

\section{Conclusions}
\label{sec: Conclusions}

We proposed the flux-mortar MFMFE method (\Cref{sec: Numerical method}) that uses the MPFA method as the subdomain discretization in a flux-mortar domain decomposition setting. The \emph{a priori} analysis shows that an additional quadrature error arises from the multipoint flux approximation. However, this term decays linearly with the mesh size, for simplicial (\Cref{sec: Analysis simplices}), $h^2$-parallelogram, and $h^2$-parallelepiped grids (\Cref{sec: Analysis quads}). In turn, the flux-mortar MFMFE method converges linearly in both flux and pressure. In \Cref{sec: DD}, we showed how the system can be reduced to an interface problem and proposed a Dirichlet-to-Neumann operator as preconditioner.
The numerical experiments of \Cref{sec: numerical results} verify the theoretical results and moreover show that the method provides the flexibility to handle general grids and complex porous media flow problems containing, for example, discontinuous and highly heterogeneous permeability fields.

As noted in \Cref{rem:polytopes}, the analysis can be further extended to general quadrilaterals and hexahedra using a non-symmetric quadrature rule \cite{klausen2006robust,WheXueYot}, as well as to general polytopes through the MFD interpretation of the MPFA method \cite{LSY}. These topics will be considered in future research.

\section*{Acknowledgments}
This project has received funding from the European Union's Horizon 2020 research and innovation programme under the Marie Skłodowska-Curie grant agreement No. 101031434 -- MiDiROM, from the Deutsche Forschungsgemeinschaft (DFG, German Research Foundation) under SFB 1313, Project Number 327154368, and from the U.S. National Science Foundation under grant DMS 2111129.

\bibliographystyle{abbrv}
\bibliography{bibliography}

\end{document}